\def\card{{{\operatorname{card}}}}
\numberwithin{equation}{section}
\theoremstyle{plain}
\newtheorem{theorem}[equation]{Theorem}
\newtheorem{corollary}[equation]{Corollary}
\newtheorem{proposition}[equation]{Proposition}
\newtheorem{lemma}[equation]{Lemma}
\newtheorem*{(DQ1)}{(DQ1)}
\theoremstyle{definition}
\theoremstyle{remark}
\begin{document}
\title{On Subshift Presentations}
\author{Wolfgang Krieger}
\begin{abstract} We consider partitioned graphs, by which we mean
finite  
directed graphs 
with a partitioned  edge set $
{\mathcal E} ={\mathcal E}^-  \cup{\mathcal E}^+$.
With additionally given a relation $\mathcal  R$ between the edges in ${\mathcal E}^-$ and the edges in  $\mathcal  E^+ $,
and under the appropriate assumptions on ${\mathcal E}^-,{\mathcal E}^+    $ and $\mathcal  R$, denoting the vertex set of the graph by ${\frak P}$, we speak of an ${\mathcal  R}$-graph 
${\mathcal  G}_{\mathcal  R}({\frak P},{\mathcal E}^-,{\mathcal E}^+)   $. From  ${\mathcal  R}$-graphs  
${\mathcal G}_{\mathcal R}({\frak P},{ \mathcal E}^-,{ \mathcal E}^+)   $  we construct  semigroups (with zero) ${\mathcal S}_{\mathcal R}({\frak P}, { \mathcal E}^-,{ \mathcal E}^+)   $ that we call  ${\mathcal R}$-graph semigroups. We write a list of conditions on a topologically transitive subshift with Property $(A)$ that together are sufficient for the subshift to have an ${\mathcal  R}$-graph semigroup as its associated semigroup.

Generalizing previous constructions, we describe a method of presenting subshifts 
by means of suitably structured finite labelled directed  graphs $( {\mathcal V}, \negthinspace \Sigma,\lambda \negthinspace)$ with vertex set 
${\mathcal V}$, edge set $\Sigma$, and a label map that asigns to the edges in $\Sigma$ labels in an ${\mathcal R}$-graph semigroup ${\mathcal  S}_{\mathcal  R}({\frak P} , 
{ \mathcal E}^-, { \mathcal E}^- )$. \negthinspace We denote the   presented subshift by $X{(\mathcal V}, \Sigma, \lambda)$ and  call $X({\mathcal V}, \Sigma, \lambda)$ an ${\mathcal  S}_{\mathcal  R}({\frak P}, { \mathcal E}^-, { \mathcal E}^- )$-presentation.

We introduce  a Property $(B)$ of subshifts that describes a relationship between contexts of admissible words of a subshift, and we introduce a Property $(c)$ of subshifts, that in addition describes a relationship between the past and future contexts and the context of  admissible words of a subshift. Property $(B)$ and 
the simultaneous occurrence of properties $(B)$ and $(c)$ are invariants of topological conjugacy,

We consider  subshifts, in
which every admissible word  has a future context, that is compatible with 
its entire  past context. Such subshifts  we call right instantaneous. 
We introduce a Property $R I$ of subshifts, and we prove that this property  is a necessary and sufficient condition 
for a subshift to have  a right instantaneous presentation.  
We consider also  subshifts, in which 
every admissible word  has a future context, that is compatible with 
its entire  past context, and also a past context that is compatible with its entire future context.
Such subshifts we call bi-instantaneous. 
We introduce a Property $BI$ of subshifts, and we prove that this property  is a necessary and sufficient condition 
for a subshift to have  a  bi-instantaneous presentation. 

We define a subshift as strongly bi-instantaneous if it has for every sufficiently long admissible word $a$ an admissible word $c$, that is contained in both the future context of $a$ and the past context of $a$, and that is such that the word $ca$ is a word in  the future context of $a$, that is compatible with the entire past context of $a$, and the word $ac$ is a word in  the past context of $a$, that is compatible with the entire future context of $a$.
We show 
that a topologically transitive subshift with Property $(A)$, and associated semigroup  
a graph inverse semigroup $\mathcal S$, has an  ${\mathcal S}$-presentation, if and only if it has properties  $(c)$ and  $BI$, and  a strongly bi-instantaneous presentation, if and only if it has properties  $(c)$ and  $BI$,  and all of its bi-instantaneous presentations are strongly bi-instantaneous.

We construct a class of subshifts  with Property $(A)$, to which certain graph inverse semigroups  ${\mathcal  S}({\frak P} , { \mathcal E}^-\negthinspace, { \mathcal E}^+ )$ are associated, that do not have  
${\mathcal  S}({\frak P} , { \mathcal E}^-\negthinspace, { \mathcal E}^+ )$-presentations.

We associate to the  labelled directed graphs $({\mathcal V}, \Sigma,\lambda)$ topological Markov chains and Markov codes, and we derive an expression for the zeta function of $X({\mathcal  V}, \Sigma, \lambda)$ in terms of the zeta functions of the topological Markov shifts and the generating functions of the Markov codes.
 \end{abstract}
 
 \maketitle

%section1
\section{Introduction}
We study symbolic dynamical systems, that we construct by means of finite  directed graphs. We will use the symbol $s$\ ($t$) to denote the source (target) vertex of an edge, or of a finite path, or of a right (left) infinite path in a directed graph.

Let  there be given a finite  directed graph with vertex set ${\frak P}$ and edge set ${\mathcal E}$, and also  a partition 
$$
{\mathcal E} = {\mathcal E}^-  \cup{\mathcal E}^+.
$$
We set
\begin{align*}
&{\mathcal E}^- ({\frak q},{\frak r} ) = \{ e^- \in { \mathcal E}^- : s(e^-) ={ \frak q},\  t( e^-) =  {\frak r} \},
\\
&{ \mathcal E}^+({\frak q},{\frak r}) = \{ e^+ \in{ \mathcal E}^+ : s(e^+) ={ \frak r},\  t( e^+) =  {\frak q} \}, \qquad { \frak q},{\frak r} \in{ \frak P}.
\end{align*}
We assume that ${ \mathcal E}^- ({\frak q},{\frak r})  \neq \emptyset$ if and only if $  { \mathcal E}^+({\frak q},{\frak r}) \neq \emptyset, { \frak q},{\frak r} \in {\frak P}.$ We also assume, that the directed graph $({\frak P},{ \mathcal E}^-)$ is strongly connected, or, equivalently, that the directed graph $({\frak P},{\mathcal E}^+)$ is strongly connected. We call the structure $({\frak P},{ \mathcal E}^-,{\mathcal E}^+)$ a partitioned graph.
Let there also be given relations 
$$
{ \mathcal R}    ({\frak q},{\frak r}) \subset  { \mathcal E}^- ({\frak q},{\frak r})   \times   { \mathcal E}^+({\frak q},{\frak r}) , \qquad{\frak q},{\frak r} \in{ \frak P},
$$
and set
$$
{ \mathcal  R} = \bigcup_{{\frak q},{\frak r} \in \frak P}{ \mathcal  R}    ({\frak q},{\frak r}) .
$$
The structure that is given in this way, for which we use the notation 
$ { \mathcal  G}_{ \mathcal R}(\frak P,  { \mathcal  E}^-,{ \mathcal  E}^+   )$, we call an ${ \mathcal  R}$-graph.
From an ${ \mathcal  R}$-graph ${ \mathcal  G}_{{ \mathcal  R}}({\frak P},  
{ \mathcal  E}^-, { \mathcal  E}^+   )$  we  construct  a semigroup (with zero) 
 ${ \mathcal  S}_{{ \mathcal  R}}({\frak P},  { \mathcal  E}^-, { \mathcal  E}^+   )$  that contains idempotents $\bold 1_{{\frak p}},{ \frak p} \in {\frak P},$ and that has ${ \mathcal E}$ as a generating set. Besides $\bold 1_{{\frak p}}^2 = \bold 1_{{\frak p}},\frak p \in{ \frak P}$, the defining relations of ${ \mathcal  S}_{{ \mathcal  R}}({\frak P},   { \mathcal  E}^-,{ \mathcal  E}^+   )$ are:
\begin{align*}
&{\bold 1}_{{\frak q}} e^- = e^-{ \bold 1}_{{\frak r}} = e^-, \quad e^- \in  {\mathcal E}^- ({\frak q},{\frak r}), \\
&{\bold 1}_{{\frak r}} e^+ = e^+{ \bold 1}_{{\frak q}} = e^+, \quad e^+ \in {\mathcal E}^+ ({\frak q},{\frak r}), \quad { \frak q},{\frak r} \in{\frak P},
\end{align*}
$$
f^- g^+  = \begin{cases}{ \bold 1}_{{\frak q}}, &\text {if $ (  f^- ,  g^+ ) \in{ \mathcal R}({\frak q},{\frak r}) $,}\\
0, & \text{if $ (  f^- ,  g^+ )\notin {\mathcal R}({\frak q},{\frak r}), \quad f^- \in  {\mathcal E}^- ({\frak q},{\frak r}), g^+ \in { \mathcal E}^+({\frak q},{\frak r)},        
\
{\frak q},  {\frak r} \in{\frak P},
 $}
\end{cases}
$$
and 
$$
{\bold 1}_{{\frak q}}{\bold 1}_{{\frak r}}= 0,    \quad { \frak q},{\frak r} \in {\frak P},{\frak q} \neq{\frak r} .
$$
We call ${\mathcal S}_{{\mathcal R}}({\frak P},  { \mathcal E}^-, {\mathcal E}^+ )  $ an ${\mathcal R}$-graph semigroup.
The graph inverse semigroups (the generalized polycyclic semigroups, see \cite {AH} and \cite [Section 10.7]{L} and compare \cite {CK}) are a special case of ${\mathcal R}$-graph semigroups: The graph inverse semigroup of a finite directed 
graph with vertex set ${\frak P}$ and edge set ${\mathcal E}$ is obtained by taking a copy of the graph $({\frak P}, {\mathcal E})$ with vertex set ${\frak P}$ 
and edge set ${\mathcal E}^- = \{ e^- : e \in  {\mathcal E}^{\circ }  \}$ and a copy $({\frak P},{\mathcal E}^+)$  of the reversed graph of $({\frak P},{\mathcal E})$  and by 
constructing the ${\mathcal R}$-graph semigroup of the partitioned graph $({\frak P}, {\mathcal E}^- , {\mathcal E}^+)  $ with the relations
$$
{\mathcal R}({\frak q},{ \frak r}) = \{(e^-, e^+) : e \in {\mathcal E}  , s(e) = {\frak q}, t(e) = {\frak r} \}, \quad  { \frak q},{\frak r } \in{ \frak P}.
$$
The relation being understood we suppress the $\mathcal R$ and denote the graph inverse semigroup of the finite directed graph by ${\mathcal S}({\frak P},  { \mathcal E}^-, {\mathcal E}^+ )  $.
The Dyck inverse monoids (the polycyclic monoids) ${\mathcal D} _N, N> 1,$ \cite {NP} are obtained in this way from the one-vertex graph with $N$ loops. 
For the   ${\mathcal R}$-graph semigroups that are obtained from  a one-vertex graph see also 
 \cite [Section 4]{HK1}.
 
 In Section 2 we show that the isomorphism of ${\mathcal R}$-graph semigroups 
 ${\mathcal S}_{{\mathcal R}}({\frak P},  { \mathcal E}^-, {\mathcal E}^+ )  $ implies the isomorphism of the 
 ${\mathcal R}$-graphs  ${\mathcal G}_{{\mathcal R}}({\frak P},  { \mathcal E}^-, {\mathcal E}^+ )  $.

For a semigroup (with zero) $\mathcal S$, and  for $F \in  \mathcal S $ set
$$
\Gamma(F) = \{ (G^-, G^+) \in{\mathcal S} \times {\mathcal S} : G^-FG^+  \neq 0 \},
$$
and
$$
[F] = \{F^{\prime}\in{\mathcal S}: \Gamma (F^{\prime}) =  \Gamma (F)\}.
$$
The  set $ [{\mathcal S}]=\{[F]: F \in{\mathcal S}\}$ with the product given by
$$
[G][H]=[GH],\quad G, H \in {\mathcal S},
$$
is a semigroup (e.g.  see \cite[Section 2.2]{P}). 
In section 2  we also write a list of  conditions on a semigroup (with zero)  $\mathcal S$ that together are necessary and sufficient for the semigroup  to be an ${\mathcal R}$-graph semigroup,
such that
the projection of ${\mathcal S}$ onto $[{\mathcal S}]$ is an isomorphism.

In symbolic dynamics one studies subshifts $(X,S_{X})$, where $X$ a 
shift invariant closed subset of the shift space $\Sigma^{{\Bbb Z}}$  and $S_{X}$ is
the restriction of the shift on  $\Sigma^{{\Bbb Z}}$    
to $X$. For an  introduction to 
the theory of subshifts see \cite {Ki} and \cite {LM}. The study of topological conjugacy and  of  its invariants occupies a central place in symbolic dynamics.  Any subshift $ \widetilde{X} $ that is topologically conjugate to a given subshift $X$ is called a presentation of $X$. 
A Property $(A)$ of  subshifts, an invariant of topological conjugacy, was introduced in \cite {Kr2}, and for  a subshift $X$ with property 
$(A)$  a
semigroup (with zero) ${\mathcal S}(X)$ was constructed that 
is also an invariant of topological conjugacy.
Next to topological conjugacy flow equivalence  and its invariants are studied in symbolic dynamics. It was shown by Costa and Steinberg \cite{CS} that Property $(A)$ and the associated semigroup are also  invariants of flow equivalence.

In section 3 we  translate all but the last of the conditions on the list of Section 2  into conditions on a subshift that are invariant under topological conjugacy. For the last condition we obtain a possibly stronger version for subshifts, that is also invariant under topological conjugacy. For a subshift $X$ with property $(A) $  these conditions together imply that ${\mathcal S}(X)$ is an ${\mathcal R}$-graph semigroup.

We describe now a way to present subshifts by means of ${\mathcal R}$-graph semigroups. We follow here closely \cite [Section 3]  {HIK} where this method of presenting subshifts was introduced for the case of graph inverse semigroups of directed graphs in which every vertex has at least two incoming edges.
These presentations were introduced 
for the purpose of extending the criterion for the existence of an enbedding of an irreducible subshift of finite type into a Dyck shift \cite {HI} to a wider class of target shifts.
Given an ${\mathcal R}$-graph semigroup ${\mathcal S}_{{\mathcal R}}({\frak  P}, 
 {\mathcal E}^-,{\mathcal E}^+)$,  denote by ${\mathcal S}^-_{{\mathcal R}}({\frak  P},    {\mathcal E}^- )$
  (${\mathcal S}^+_{{\mathcal R}}(\frak P,    {\mathcal E}^+ )$) the subset of 
  ${\mathcal S}_{{\mathcal R}}({\frak  P},  
   {\mathcal E}^-,  {\mathcal E}^+   )$ that contains the non-zero elements of the 
   subsemigroup of ${\mathcal S}_{{\mathcal R}}({\frak  P},   {\mathcal E}^-, 
    {\mathcal E}^+   )$ that is generated by 
  $ {\mathcal E}^-$(   $ {\mathcal E}^+$  ), and 
consider a finite strongly connected labelled directed graph with vertex set ${\mathcal V}$ and edge set $\Sigma$, and a labeling map $\lambda$ that assign to every edge $\sigma \in \Sigma$ a label 
\begin{align*}
 \lambda(\sigma) \in  {\mathcal S}^-_{{\mathcal R}}({\frak  P},   {\mathcal E}^- ) \cup
  \{{ \bold 1}_{\frak p}:{\frak p} \in {\frak  P}\} \cup {\mathcal S}^+_{{\mathcal R}}({\frak  P},   {\mathcal E}^+) . \tag{G1}
\end{align*}
The label map $\lambda$ extends to finite paths $(\sigma_i)_{1 \leq i \leq I}, I \in \Bbb N,$ in the graph $({\mathcal V}, \Sigma)$ by
$$
\lambda((\sigma_i)_{1 \leq i \leq I}) = \prod_{1 \leq i \leq I}\lambda (\sigma_i).
$$
We denote for ${\frak p} \in {\frak  P}$ by ${\mathcal V}({\frak p}) $ the set of $V\in{\mathcal V}$ such that there is a cycle $(\sigma_i)_{1 \leq i \leq I},I \in \Bbb N,$ in the graph $({\mathcal V} , \Sigma  )$ from $V$ to $V$ such that
$$ 
\lambda(( \sigma_i)_{1 \leq i \leq I}) = {\bold 1}_{{\frak p}}. 
$$
We impose  conditions (G 2 - 5):

\bigskip
\noindent(G2)\quad\quad \quad\quad\quad\quad\quad\quad\quad\quad\quad\quad
$
{\mathcal V}({\frak p}) \neq \emptyset, \quad {\frak p}\in {\frak P},
$

\bigskip
\noindent(G3)\quad\quad\quad\quad\quad\quad\quad\quad \ 
$
 \{{\mathcal V}({\frak p}) :
{\frak p} \in{\frak P}\}
$ 
 is a partition of  ${\mathcal V},
$
\bigskip

\noindent(G4)
 For $ V  \in {\mathcal V}({\frak p}), {\frak p} \in{ \frak P}$,
 and for all edges $e$ that leave $V,{ \bold 1}_{\frak p} \lambda(e) \neq 0$, 
and for all edges $e$ that enter $V, \lambda(e){\bold 1}_{{\frak p}}  \neq 0$.

\bigskip
\noindent(G5)
For $f\in {\mathcal S}_{{\mathcal R}}({\frak  P}, 
 {\mathcal E}^-,{\mathcal E}^+),  {\frak q},  {\frak r} \in 
{\frak P}$, such that ${\bold 1}_{{\frak q}}  f{\bold 1}_{{\frak r}}  \neq 0$,  and for $   U  \in{ \mathcal V} {(\frak q}), W  \in{ \mathcal V}({\frak r})  $, there exists
a path $b$ in the labeled directed graph  $({\mathcal V},\Sigma, \lambda)$ from $U$ to $W$ such that 
 $\lambda(b) = f$.

A finite  labelled directed graph $({ \mathcal V},\Sigma, \lambda)$, that satisfies conditions 
(G 1 - 5), gives rise to a subshift  $X({ \mathcal V},\Sigma, \lambda)  $ that has as its language of admissible words the set of finite non-empty paths $b$ in  the graph $({ \mathcal V}, \lambda)$   such that $\lambda (b) \neq 0$.
We  call the subshift $X({ \mathcal V},\Sigma, \lambda)  $ an 
$S_{\mathcal R}({\frak P}, {\mathcal E}^-,  {\mathcal E}^+ )$-presentation, and we say that a subshift $X$ has an $S_{\mathcal R}({\frak P}, {\mathcal E}^-,  {\mathcal E}^+ )$-presentation,   if it is topologically conjugate to an  $S_{\mathcal R}({\frak P}, {\mathcal E}^-,  {\mathcal E}^+ )$-presentation. The 
$S_{\mathcal R}({\frak P}, {\mathcal E}^-,  {\mathcal E}^+ )$-presentations with a labeling map $\lambda$ such that
$$
\lambda(\sigma) \in  {\mathcal E}^- \cup \{{ \bold 1}_{\frak p}:{\frak p} \in {\frak  P}\} \cup {\mathcal E}^+, \qquad \sigma \in \Sigma,
$$
belong to the class of sofic-Dyck shifts as described in \cite{BBD1},\cite { BBD2}.
%%%%%%%
In \cite [Section 6] {HIK} and \cite [Section 4] {IK2} $\mathcal D_2$-presentations are described, that arise as intersections of $\mathcal D_2$  with subshifts of finite type.

The special cases of $S_{\mathcal R}({\frak P}, {\mathcal E}^-,  {\mathcal E}^+ )$-presentations that are obtained for ${ \mathcal V} ={\frak P}, \Sigma= {\mathcal E}, $ and with the identity map as the labeling map, 
are the ${\mathcal R}$-graph shifts (see \cite [Section 4]{HK1})  and \cite {HK2}).
In the case of directed graphs the ${\mathcal R}$-graph shifts are the Markov-Dyck shifts  \cite {M3},
and  in the case of one-vertex directed graphs the ${\mathcal R}$-graph shifts are the Dyck shifts \cite {Kr1}.
The ${\mathcal R}$-graph shifts belong to the class of FT-Dyck shifts as described in \cite { BBD3}. For ${\mathcal R}$-graph shifts see also \cite {Kr6}.
Adding a loop at each vertex ${\frak p} \in {\frak P}$ with label ${\bold 1}_{\frak p}$, one obtains what can be called the  ${\mathcal R}$-graph Motzkin shifts. In the case of directed graphs the ${\mathcal R}$-graph Motzkin shifts are the Markov-Motzkin shifts \cite [Section 4.1]{KM3},
and  in the case of one-vertex directed graphs the ${\mathcal R}$-graph Motzkin shifts are the Motzkin shifts \cite {M2, I}.

In section 4 we introduce
a Property $(B)$ of subshifts  that describes a relationship between contexts of admissible words of a subshift. In section 5 we introduce a Property $(c)$ of subshifts, that  describes a relationship between the past and future contexts and the context of  an admissible word of a subshift. 
One is lead to the formulation of  properties $(B)$ and $(c)$ by observing the behavior of the Dyck shifts and by abstracting  essential dynamical properties.
We prove that  Property $(B)$ is an invariant of topological conjugacy, and that
the simultaneous occurrence of properties $(B)$ and $(c)$ is also an invariant of topological conjugacy.
An ${\mathcal S}_{{\mathcal R}}({\frak P},  { \mathcal E}^-, {\mathcal E}^+ )  $-presentation that has Property $(A)$, has Property $(B)$ precisely if ${\mathcal S}_{{\mathcal R}}({\frak P},  { \mathcal E}^-, {\mathcal E}^+ )  $ is a graph inverse semigroup.

In section 6 we consider  subshifts, in
which every admissible word  has a future context, that is compatible with 
its entire  past context. Such subshifts  we call right instantaneous. 
We introduce a Property $R\negthinspace I$ of subshifts, and we prove that this property  is a necessary and sufficient condition 
for the subshift to have  a right instantaneous presentation.  
We consider also  subshifts, in which 
every admissible word  has a future context, that is compatible with 
its entire  past context, and also a past context that is compatible with its entire future context.
Such subshifts we call bi-instantaneous. 
We introduce a Property $B\negthinspace I$ of subshifts, and we prove that this property  is a necessary and sufficient condition 
for the subshift to have  a  bi-instantaneous presentation.  Sofic systems have bi-instantaneous presentations.

In Section 7 we introduce a notion of strong bi-instantaneity. 
We show 
that a topologically transitive subshift with Property $(A)$, and associated semigroup  
a graph inverse semigroup $\mathcal S(\frak P, \mathcal E^-, \mathcal E^+)$, has an 
$\mathcal S(\frak P, \mathcal E^-, \mathcal E^+)$-presentation, if and only if it has properties 
$(c)$ and  $B I$ and  a strongly bi-instantaneous presentation, if and only if it has properties $(c)$ and  $B I$,  and  a strongly bi-instantaneous presentations, and  all of its bi-instantaneous presentations are strongly bi-instantaneous.

 In section 8 we construct   for a graph $\mathcal G(\frak P, \mathcal E)$, such that for
$\frak q,\frak r \in \frak P  $,
 $\card (\mathcal E(\frak q , \frak r) ) \neq 1$, a subshift with property (A) and with the  graph inverse semigroup of  $\mathcal S(\frak P, \mathcal E^-, \mathcal E^+)$ as associated semigroup, that does not have an   $\mathcal S(\frak P, \mathcal E^-, \mathcal E^+)$-presentation.

In section 9, applying methods of Keller \cite {Ke}, we associate to the  labelled directed graphs $({\mathcal V}, \Sigma,\lambda)$ topological Markov chains and circular Markov codes, and we derive an expression for the zeta function of $X({\mathcal  V}, \Sigma, \lambda)$ in terms of the zeta functions of the topological Markov shifts and the generating functions of the Markov codes.

%section2
\section{ ${\mathcal R}$-graph semigroups}
Let ${\mathcal S}$ be a semigroup (with zero). We set
\begin{align*}
&{\mathcal C}_-(F) = \{G\in {\mathcal S}: GF \neq 0 \},\\
&{\mathcal C}_+(F) = \{G\in{\mathcal S}: FG \neq 0 \},\quad F \in {\mathcal S}.
\end{align*}
We denote by  ${\mathcal U}_{\mathcal S}$ the set of idemptents $U$ of ${\mathcal S}$, such that one has for 
$F_-\in{\mathcal C}_-(U)$ that $F_-U = F_- $ and  for $F_+\in{\mathcal C}_+(U)$ that  $UF_+ = F_+$.
We set
\begin{align*}
&{\mathcal S}^-(U) =
 \bigcap_{F_- \in{\mathcal C}_-(U)}
{\mathcal C}_+(F_-U), 
\quad U \in {\mathcal U}_{\mathcal S},
\\
&{\mathcal S}^-(V,W) ={\mathcal S}^-(V)W \setminus \{0\}, \quad V ,W\in {\mathcal U}_{\mathcal S},
\end{align*}
and
\begin{align*}
&{\mathcal S}^+(U) = \bigcap_{F_+ \in{\mathcal C}_+(U)}{\mathcal C}_-(UF_+), \quad U \in {\mathcal U}_{\mathcal S},
\\
&{\mathcal S}^+(V,W) =  W{\mathcal S}^+(V) \setminus \{0\}, \quad W, V \in {\mathcal U}_{\mathcal S}.
\end{align*}
We  define sub-semigroups (with zero) ${\mathcal S} ^-$ and  ${\mathcal S} ^+$ of ${\mathcal S}$ by setting
$$
{\mathcal S}^- = \bigcup_ {U \in{\mathcal U}_{\mathcal S}}{\mathcal S} ^- (U), \quad
{\mathcal S}^+ =  \bigcup_ {U \in{\mathcal U}_{\mathcal S}}{\mathcal S} ^+ (U). 
$$
We say that an element $F\in{\mathcal S}  ^- \setminus  {\mathcal U}_{\mathcal S}  $ is indecomposable in ${\mathcal S}  ^-$ if $F =GH, G, H \in {\mathcal S} ^- ,$ implies that $G$ or $H$ is in ${\mathcal U}_{\mathcal S} $. The indecomposable elements in ${\mathcal S}  ^+ $ are defined symmetrically. 

%Theorem 2.1
\begin{theorem}
 The isomorphism of the  ${\mathcal R}$-graph semigroups 
 ${\mathcal S}_{{\mathcal R}}({\frak P},  { \mathcal E}^-, {\mathcal E}^+ )  $ implies the isomorphism of the 
 ${\mathcal R}$-graphs  ${\mathcal G}_{{\mathcal R}}({\frak P},  { \mathcal E}^-, {\mathcal E}^+ )  $.
\end{theorem}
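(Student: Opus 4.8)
The plan is to reconstruct the $\mathcal R$-graph $\mathcal G_{\mathcal R}(\frak P, \mathcal E^-, \mathcal E^+)$ entirely from the bare semigroup $\mathcal S = \mathcal S_{\mathcal R}(\frak P, \mathcal E^-, \mathcal E^+)$, using only data expressed purely in terms of the multiplication and the zero: the sets $\mathcal C_-(F), \mathcal C_+(F)$, the idempotent set $\mathcal U_{\mathcal S}$, the subsemigroups $\mathcal S^-, \mathcal S^+$, and the notion of indecomposability. Each of these is manifestly invariant under any semigroup isomorphism, so an isomorphism $\phi \colon \mathcal S \to \mathcal S' = \mathcal S_{\mathcal R'}(\frak P', \mathcal E'^-, \mathcal E'^+)$ automatically carries each to its counterpart for $\mathcal S'$. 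It therefore suffices to show that these intrinsic notions recover the vertex set $\frak P$, the two edge sets $\mathcal E^-, \mathcal E^+$, the source and target maps, and the relation $\mathcal R$; granting this, $\phi$ will restrict to an isomorphism of $\mathcal R$-graphs.

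The first step is a normal form. Reading the defining relations as the rewriting rule that replaces an adjacent pair $f^- g^+$ by $\mathbf 1_{\frak q}$ (when $(f^-, g^+) \in \mathcal R(\frak q, \frak r)$) or by $0$ (otherwise), together with the absorption of the idempotents $\mathbf 1_{\frak p}$, I would note that this rule strictly shortens words and has no overlapping redexes, hence is terminating and confluent. Consequently every nonzero element of $\mathcal S$ has a unique reduced expression $u\,v$ in which $u$ is a (possibly empty) word in $\mathcal E^+$ and $v$ a (possibly empty) word in $\mathcal E^-$, the empty words being recorded by the idempotents $\mathbf 1_{\frak p}$. With the normal form available I would identify $\mathcal U_{\mathcal S}$ with $\{\mathbf 1_{\frak p} : \frak p \in \frak P\}$: each $\mathbf 1_{\frak p}$ lies in $\mathcal U_{\mathcal S}$ directly from the absorption relations, while any remaining idempotent is of the form $u\,v$ with $v\,u \in \mathcal U_{\mathcal S}$, the basic instance being $g^+ f^-$ with $(f^-, g^+) \in \mathcal R$. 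Strong connectivity of $(\frak P, \mathcal E^-)$ supplies an outgoing edge $h^-$ at the relevant vertex, and $(g^+ f^-) h^- = g^+ f^- h^- \neq h^-$ shows that $g^+ f^-$ fails to act as a local identity, so it is excluded from $\mathcal U_{\mathcal S}$; the same device disposes of the longer extraneous idempotents.

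Next I would recover the remaining data. A computation with the reduced form gives $\mathcal S^-(\mathbf 1_{\frak p}) = \{\,v : v \text{ a word in } \mathcal E^- \text{ with } s(v) = \frak p\,\} \cup \{\mathbf 1_{\frak p}\}$, and symmetrically for $\mathcal S^+$, so that $\mathcal S^-$ and $\mathcal S^+$ are exactly the subsemigroups generated by $\mathcal E^-$ and by $\mathcal E^+$ (with the vertex idempotents adjoined). The indecomposable elements of $\mathcal S^-$ are then precisely the one-letter words, i.e. the edges $\mathcal E^-$, and likewise the indecomposables of $\mathcal S^+$ are the edges $\mathcal E^+$. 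Finally, the source and target of an edge $e^-$ are the unique members $U, U'$ of $\mathcal U_{\mathcal S}$ with $U e^- = e^-$ and $e^- U' = e^-$, and a pair $(f^-, g^+)$ of indecomposables lies in $\mathcal R$ exactly when $f^- g^+ \neq 0$ (equivalently $f^- g^+ \in \mathcal U_{\mathcal S}$). All of these descriptions are intrinsic.

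The main obstacle is precisely the pair of identifications at the heart of the argument: that $\mathcal U_{\mathcal S}$ contains no idempotents beyond the $\mathbf 1_{\frak p}$, and that the abstractly defined $\mathcal S^-, \mathcal S^+$ coincide with the edge-generated subsemigroups. Both reduce to producing, for each candidate extraneous element, a witness on which the required equation fails, and here one must use the standing hypotheses---the condition that $\mathcal E^-(\frak q, \frak r) \neq \emptyset$ exactly when $\mathcal E^+(\frak q, \frak r) \neq \emptyset$ and the strong connectivity---to guarantee that such witnesses exist. Once these identifications are in place, invariance under $\phi$ finishes the proof: $\phi$ restricts to a bijection $\mathcal U_{\mathcal S} \to \mathcal U_{\mathcal S'}$ and to bijections $\mathcal E^- \to \mathcal E'^-$, $\mathcal E^+ \to \mathcal E'^+$ of the indecomposables that respect source, target, and the relation, which is exactly an isomorphism $\mathcal G_{\mathcal R}(\frak P, \mathcal E^-, \mathcal E^+) \to \mathcal G_{\mathcal R'}(\frak P', \mathcal E'^-, \mathcal E'^+)$.
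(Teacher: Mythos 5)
You follow the same route as the paper's own proof: establish unique normal forms, show $\mathcal U_{\mathcal S}=\{\mathbf 1_{\frak p}:\frak p\in\frak P\}$, identify $\mathcal E^-$ and $\mathcal E^+$ with the indecomposable elements of $\mathcal S^-$ and $\mathcal S^+$ (reading off sources and targets from the unit idempotents), and recover $\mathcal R$ from the non-vanishing of $f^-g^+$. Your witness argument excluding the idempotents $g^+f^-$ from $\mathcal U_{\mathcal S}$ (multiply by an outgoing edge $h^-$ supplied by strong connectivity and compare normal forms) is correct, and is in fact more explicit than the paper, which only says this ``can be seen'' from the normal form of the idempotents.

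The step that fails is the one you yourself flag as the main obstacle, and your proposed repair is not sufficient: the standing hypotheses do \emph{not} guarantee the witnesses needed for $\mathcal S^-(\mathbf 1_{\frak p})=\{\text{words in }\mathcal E^-\text{ starting at }\frak p\}\cup\{\mathbf 1_{\frak p}\}$. Consider the one-vertex graph with one loop $e^-$, one loop $e^+$, and $\mathcal R=\{(e^-,e^+)\}$: both standing hypotheses hold, and $\mathcal S$ is the bicyclic monoid with an adjoined (isolated) zero. Since no product of nonzero elements vanishes there, $\mathcal C_+(F)=\mathcal S\setminus\{0\}$ for every nonzero $F$, hence $\mathcal S^-(\mathbf 1_{\frak p})=\mathcal S\setminus\{0\}$, which contains $e^+$; no witness $F$ with $F\mathbf 1_{\frak p}\neq 0$ and $FG=0$ exists. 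Worse, in this example $\mathcal S^-=\mathcal S\setminus\{0\}$ has no indecomposable elements at all (for instance $e^-=e^-\cdot(e^+e^-)$ and $e^+=(e^+e^-)\cdot e^+$, with neither factor in $\mathcal U_{\mathcal S}$), so the identification of edges with indecomposables collapses entirely. What your witness argument actually needs, for each letter $e^+$ with $\mathbf 1_{\frak p}e^+=e^+$, is an edge $f^-$ with $f^-\mathbf 1_{\frak p}=f^-$ and $(f^-,e^+)\notin\mathcal R$; when $\frak p$ has two or more predecessor vertices this is automatic, but when $\frak p\in\frak P^{(1)}$ it is exactly Condition (a$-$) of Section 2 --- a condition the paper introduces for precisely such purposes but does not assume in Theorem 2.1. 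In fairness, the paper's own proof asserts the same identification (``one can identify\dots'') with no further justification, so your write-up faithfully mirrors the paper's argument and is no weaker than it; but your explicit claim that strong connectivity plus the nonemptiness equivalence produce the required witnesses is the concrete point at which the proof, as written, breaks, and closing it requires either an additional hypothesis such as Condition (a) or a separate treatment of the degenerate vertices.
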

\begin{proof}
Let
$$
\mathcal S ={\mathcal S}_{{\mathcal R}}({\frak P},  { \mathcal E}^-, {\mathcal E}^+ ) .
$$
With vertices
$$
\frak p_i \in \frak P, \quad 0 \leq i \leq I, \quad I \in \Bbb Z_+,
$$
and edges
$$
e^+_i \in \mathcal E^+ (\frak p_{i-1}  ,  \frak p_i ), \quad I \geq i \geq 1,
$$
$$
e^-_i\in \mathcal E^- (\frak p_{i-1} , \frak p_i ), \quad 1 \leq i \leq I,
$$
such that
$$
( e^-_i , e^+_i   ) \in \mathcal R(\frak p_{i-1}  ,  \frak p_i ), \quad  1 \leq i \leq I,
$$
the normal forms of the idempotents of $\mathcal S$ are given by
$$
(\prod_{I\geq i \geq1} e^+_{i}){\bold 1}_{\frak p_0}(\prod_{1\leq i \leq I} e^-_{i}). 
$$
 From this expression it can be seen that 
 $$
 {\mathcal U}_{\mathcal S} = \{{ \bold 1}_{\frak p}:{\frak p} \in {\frak  P}\},
 $$
which then means that one can identify $\frak P$ with ${\mathcal U}_{\mathcal S}$, and then also 
$\mathcal E^-(\frak q, \frak r)$ with the set of indecomposable elements $f$ of $\mathcal S^-$, such that 
$
\bold 1_{\frak q}f \neq 0,f  \bold 1_{\frak r}\neq 0,
$
and 
$\mathcal E^+(\frak q, \frak r)$ with the set of indecomposable elements of $\mathcal S^+$, such that 
$
\bold 1_{\frak r}f \neq 0,f  \bold 1_{\frak q}\neq 0.
$
Also, for  $e^- \in\mathcal E^-(\frak q, \frak r), e^+ \in\mathcal E^+(\frak q, \frak r),\frak q, \frak r\in \frak P,  $ one has $ (  e^- ,   e^+ ) \in 
\mathcal R (\frak q, \frak r), $ if and only if $e^-    e^+ \neq 0$.
\end{proof}

We have a condition  (AP1) on a semigroup $\mathcal S$:

\bigskip

\noindent
$\bold{(AP1)}$
 ${\mathcal U}_{\mathcal S}$ is a finite set. 
 
 \medskip
 
We also have a condition (AP2) on a semigroup $\mathcal S$, that comes in two parts that are symmetric to one another, of which we write only one part:

\bigskip

\noindent
$\bold{(AP2-)}$ 
${\mathcal S}^-(V,W)
 \neq \emptyset, \quad V, W \in  {\mathcal U}_{\mathcal S}.$

\bigskip
We  also have conditions (AQ1$-$2) on a semigroup $\mathcal S$:  

\bigskip
\noindent
$\bold{(AQ1)}$
$
{\mathcal S}^-(V,U)  {\mathcal S}^+(W,U)  \subset {\mathcal S}^-(V)\cup {\mathcal S}^+(W), \quad U, V, W \in  {\mathcal U}_{\mathcal S}. 
$

\bigskip
\noindent
$\bold{(AQ2)}$
$
{\mathcal S} = \bigcup_{U \in {\mathcal U}_{\mathcal S}} {\mathcal S}  ^+ (U) {\mathcal S} ^- (U). 
$

\bigskip

We also have conditions (AQ3$-$6) on a semigroup $\mathcal S$, that similarly come in two parts, that  are symmetric to one another. We only write one part of these conditions:

\bigskip
 \noindent
 $\bold{(AQ3-)}$
For $U, V \in {\mathcal U}_{\mathcal S}$ one has that for $F^-\in {\mathcal S}^-(U,V) $ there exists an 
 $F^+\in{\mathcal S} ^+(U,V)$
  such that 
$$
F^-F^+= U.
$$

\bigskip
\noindent
$\bold{(AQ4-)}$
For $U,V,W \in {\mathcal U}_{\mathcal S}$ one has that for 
$$
F^- \in {\mathcal S}^- (U, V) \setminus \{ U\}, \qquad G^+\in 
{\mathcal S}^+ ( W,V) \setminus \{ V\},
 $$ 
 such that
$$
F^-G^+ \in{\mathcal S}^-(U, W),
$$
there exists an $H^- \in {\mathcal S}^-(W, V) \setminus \{ W\}$ such that $$F^-G^+H^-= F^-.$$

\bigskip
\noindent
$\bold{(AQ5-)}$
${\mathcal S}^-$ has finitely many indecomposable elements.

\bigskip
\noindent
$\bold{(AQ6-)}$ ${\mathcal S}^-$ is generated by its indecomposable elements.

\bigskip

Note that conditions  (AP2$-$)  and  (AQ3$-$) together imply Condition (AP2$+$),  and, symmetrically, conditions (AP2$+$)  and  
(AQ3$+$) together imply Condition (AP2$-$).

\bigskip
For an ${\mathcal R}$-graph  ${\mathcal G}_{\mathcal R}({\frak P},
{\mathcal E}^-, {\mathcal E}^- )$ we set
\begin{align*}
&\Omega^+(e^-) = \{e^+ \in  \mathcal E^+(\frak q,\frak r) : ( e^-,e^+ ) \in \mathcal R(\frak q  ,  \frak r ) \}, \quad  e^- \in \mathcal E^- (\frak q,\frak r), \\
&\Omega^-(e^+) = \{e^- \in   \mathcal E^-(\frak q,\frak r) : ( e^-,e^+ ) \in \mathcal R(\frak q  ,  \frak r ) \}, \quad  e^+ \in \mathcal E^+ (\frak q,\frak r) , \quad \frak q,\frak r \in \frak P.
\end{align*}

We introduce  condition (a)  on an ${\mathcal R}$-graph  ${\mathcal G}_{\mathcal R}({\frak P},
{\mathcal E}^-, {\mathcal E}^- )$, that  consists of two parts  that are symmetric to one another:

\begin{align*}
 \Omega^+(e^-) \neq \Omega^+(\widetilde e^-),\qquad \quad e^-,\widetilde e^-\in   \mathcal E^- (\frak q,\frak r),e^-\neq\widetilde e^-, \qquad \frak q,\frak r \in \frak P, 
 \tag {a $-$} 
\end{align*}
\begin{align*}
\Omega^-(e^+) \neq \Omega^-(\widetilde e^+), \qquad \quad e^+,\widetilde e^+\in   \mathcal E^+ (\frak q, \frak r),e^+\neq\widetilde e^+,\qquad
\frak q,\frak r \in \frak P.  
 \tag {a +} 
\end{align*}

The following theorem characterizes the $\mathcal R$-graph semigroups among the semigroups 
${\mathcal S}$ such that the projection of ${\mathcal S}$ onto $[{\mathcal S}]$ is an isomorphism. 
For the proof of the theorem  compare \cite [Section 3]{Kr4}.  

%, Theorem 2.2
\begin{theorem} Let the  semigroup ${\mathcal S}$ be such that
the projection of ${\mathcal S}$ onto $[{\mathcal S}]$ is an isomorphism. Then ${\mathcal S}$ is an $\mathcal R$-graph semigroup 
if and only if ${\mathcal S}$ satisfies conditions (AP 1 $ -$ 2) and 
conditions (AQ 1 $-$ 6).
\end{theorem}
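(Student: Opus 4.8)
The plan is to prove the two implications separately, using the normal form of elements of an $\mathcal R$-graph semigroup as the main tool throughout. For the ``only if'' direction, assume ${\mathcal S} = {\mathcal S}_{\mathcal R}({\frak P}, {\mathcal E}^-, {\mathcal E}^+)$. I would first record the normal form already exploited above: every nonzero $F \in {\mathcal S}$ is uniquely $F = G^+ {\bold 1}_{\frak p} G^-$, where $G^+$ is a reduced path in $({\frak P}, {\mathcal E}^+)$ into ${\frak p}$, $G^-$ a reduced path in $({\frak P}, {\mathcal E}^-)$ out of ${\frak p}$, and no further cancellation across ${\bold 1}_{\frak p}$ is possible. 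Since ${\mathcal U}_{\mathcal S} = \{{\bold 1}_{\frak p} : {\frak p} \in {\frak P}\}$ has already been identified with ${\frak P}$, (AP1) is just the finiteness of ${\frak P}$. I would then compute the distinguished subsets: ${\mathcal S}^-({\bold 1}_{\frak p})$ is the set of ${\mathcal E}^-$-paths issuing from ${\frak p}$, ${\mathcal S}^+({\bold 1}_{\frak p})$ the set of ${\mathcal E}^+$-paths terminating at ${\frak p}$, and correspondingly the ${\mathcal S}^-(V,W)$ and ${\mathcal S}^+(V,W)$ become paths with prescribed endpoints. With these dictionaries each remaining assumption becomes a statement about paths: (AP2) is strong connectivity; (AQ2) is the normal form itself; (AQ5) and (AQ6) restate that ${\mathcal E}^-$ is finite and that the indecomposables of ${\mathcal S}^-$ are exactly the single edges, which generate all reduced ${\mathcal E}^-$-paths; (AQ1) is the rewriting rule that a forward path followed by a backward path collapses to a shorter forward-then-backward word; (AQ3) is the existence, for every forward path, of the ${\mathcal R}$-matched reversing backward path whose product with it is the source idempotent; and (AQ4) is the cancellativity of path concatenation. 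The symmetric halves follow from the left/right symmetry of the construction.

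Conversely, assume the projection onto $[{\mathcal S}]$ is an isomorphism and that (AP 1--2) and (AQ 1--6) hold. The plan is to manufacture an $\mathcal R$-graph out of ${\mathcal S}$ and build an isomorphism onto its semigroup. Put ${\frak P} := {\mathcal U}_{\mathcal S}$, finite by (AP1); for distinct $U, V \in {\mathcal U}_{\mathcal S}$ the absorption defining ${\mathcal U}_{\mathcal S}$ forces $UV = 0$, exactly as required of distinct vertex idempotents. Let ${\mathcal E}^-$ (resp.\ ${\mathcal E}^+$) be the indecomposable elements of ${\mathcal S}^-$ (resp.\ ${\mathcal S}^+$), finite by (AQ5); a preliminary step is to show that each such element has a well-defined source and target in ${\mathcal U}_{\mathcal S}$, so that it lies in a single ${\mathcal S}^-(U,V)$ and thereby supplies the edges of the partitioned graph. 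Define ${\mathcal R}$ by declaring $(e^-, e^+) \in {\mathcal R}$ precisely when $e^- e^+ \neq 0$. Using (AQ3) and its symmetric partner one verifies the partitioned-graph axiom that ${\mathcal E}^-({\frak q},{\frak r})$ and ${\mathcal E}^+({\frak q},{\frak r})$ are simultaneously empty or nonempty, while (AP2) yields strong connectivity. A key auxiliary claim, again resting on (AQ3), is that a nonzero product $e^- e^+$ of indecomposables equals the source idempotent; granting this, all defining relations of an $\mathcal R$-graph semigroup hold among these elements of ${\mathcal S}$.

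These relations give a canonical homomorphism $\phi : {\mathcal S}_{\mathcal R}({\frak P}, {\mathcal E}^-, {\mathcal E}^+) \to {\mathcal S}$ carrying generators to the chosen elements. Surjectivity is immediate from (AQ2) together with (AQ6) and its symmetric partner, which present every element of ${\mathcal S}$ as a backward path times a forward path, each a product of indecomposables. The substance of the argument, and the step I expect to be the main obstacle, is injectivity. Here I would use (AQ4) and its symmetric counterpart to show that $\phi$ does not collapse the path structure, i.e.\ that the image of a reduced ${\mathcal E}^-$-path (resp.\ ${\mathcal E}^+$-path) factors uniquely into indecomposables and distinct reduced paths have distinct images, while (AQ1) guarantees that mixed products reduce in ${\mathcal S}$ exactly as the normal form of ${\mathcal S}_{\mathcal R}$ prescribes, so that every element of ${\mathcal S}$ admits a normal-form preimage. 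Faithfulness on reduced normal forms is then where the projection-isomorphism hypothesis is decisive: two distinct normal forms of ${\mathcal S}_{\mathcal R}$ are separated by their context sets $\Gamma(\cdot)$, and since $F \mapsto [F]$ is injective on ${\mathcal S}$ their images cannot coincide. Combining injectivity with surjectivity shows $\phi$ is an isomorphism, so ${\mathcal S}$ is an $\mathcal R$-graph semigroup, completing the second implication.
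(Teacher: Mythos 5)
Your construction in the ``if'' direction coincides with the paper's: vertices ${\mathcal U}_{\mathcal S}$ (finite by (AP1)), edges the indecomposable elements of ${\mathcal S}^-$ and ${\mathcal S}^+$ (finite by (AQ5), with well-defined endpoints), relations ${\mathcal R}$ defined by nonvanishing of products, strong connectivity from (AP2), and surjectivity of the resulting presentation from (AQ2) and (AQ6). The genuine gap is in your injectivity step, which is exactly where the paper does its real work. You assert that ``two distinct normal forms of ${\mathcal S}_{\mathcal R}$ are separated by their context sets $\Gamma(\cdot)$''. That is false for a general ${\mathcal R}$-graph semigroup: by the paper's Theorem 2.3, the projection of ${\mathcal S}_{\mathcal R}({\frak P},{\mathcal E}^-,{\mathcal E}^+)$ onto $[{\mathcal S}_{\mathcal R}({\frak P},{\mathcal E}^-,{\mathcal E}^+)]$ is an isomorphism only when the graph satisfies Condition $(b)$ and Condition (d$-$)/(d$+$); for the one-vertex one-loop graph, for instance, $e^+e^-$ and ${\bold 1}$ have identical contexts. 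So before context-separation can be used, one must prove that the graph built out of ${\mathcal S}$ satisfies such a separation condition, and the only available tool is the standing hypothesis that $F \mapsto [F]$ is injective on ${\mathcal S}$. This is precisely the step the paper supplies and you omit: it deduces Condition $(a)$ for ${\mathcal G}_{\mathcal R}({\mathcal U}_{\mathcal S},{\mathcal E}^-,{\mathcal E}^+)$ from the observation that $\Omega^+(E) = \Omega^+(\widetilde{E})$ would force $[E] = [\widetilde{E}]$, hence $E = \widetilde{E}$.

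Moreover, even granting context-separation inside ${\mathcal S}_{\mathcal R}$, your inference ``since $F \mapsto [F]$ is injective on ${\mathcal S}$ their images cannot coincide'' does not go through: to convert a separating pair $(H^-,H^+)$ with $H^-F_1H^+ \neq 0 = H^-F_2H^+$ in ${\mathcal S}_{\mathcal R}$ into a separating pair for $\phi(F_1), \phi(F_2)$ in ${\mathcal S}$, you need $\phi(H^-F_1H^+) \neq 0$, i.e.\ that $\phi$ reflects zero --- which is itself part of the injectivity being proved, so the argument is circular. The paper avoids the homomorphism $\phi$ altogether and works inside ${\mathcal S}$: having Condition $(a)$ for the constructed graph, it applies (a$-$) and (a$+$) repeatedly, together with the implication that $[H] = [U]$ forces $H = U$ for $H \in {\mathcal S}$, $U \in {\mathcal U}_{\mathcal S}$, to show that the presentation (2.3) of each element of ${\mathcal S}$ is \emph{unique}; existence plus uniqueness of these normal forms is what identifies ${\mathcal S}$ with ${\mathcal S}_{\mathcal R}({\mathcal U}_{\mathcal S},{\mathcal E}^-,{\mathcal E}^+)$. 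Finally, note that the paper proves only sufficiency, so your ``only if'' sketch is extra; but it too glosses over a real point: (AQ3$-$) is not automatic for an arbitrary ${\mathcal R}$-graph semigroup, since the definition of an ${\mathcal R}$-graph allows edges $e^-$ with $\Omega^+(e^-) = \emptyset$, for which no $F^+$ with $e^-F^+ = U$ exists; excluding this again requires the standing projection hypothesis rather than the normal form alone.
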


\begin{proof}  We prove sufficiency.
By (AP1) and (AQ1), (AQ5) and (AQ6) we can use a partitioned graph with vertex set ${\mathcal U}_{\mathcal S}$, and edge sets
$$
{\mathcal E}^- = \bigcup _{V, W \in {\mathcal U}_{\mathcal S} } {\mathcal E}^-(V, W), \quad {\mathcal E}^+ = \bigcup _{V, W \in {\mathcal U}_{\mathcal S} } {\mathcal E}^+(V, W),
$$
with the set of indecomposable elements in ${\mathcal S}^-(V, W)$ as the set 
 ${\mathcal E}^-(V, W)$, and the set of indecomposable elements in ${\mathcal S}^+(V, W)$ as the set ${\mathcal E}^+(V, W),V, W \in 
{\mathcal U}_{\mathcal S}.$ 

Let $V, W \in{\mathcal U}_{\mathcal S}$, and let $E^-\in {\mathcal S}^-(V, W)$ be indecomposable in  $ {\mathcal S}^-$. By $(AQ3-)$
there exists  an $E^+\in {\mathcal S}^+ (V, W)$ such that 
\begin{align*}
E^-E^+   = V. \tag {2.1}
\end{align*}
We prove that $E^+ $ is indecomposable.  Assume the contrary, and let $U \in{\mathcal U}_{\mathcal S} $ and
$$
F^+ \in {\mathcal S}^+(U, W),\quad  G^+ \in {\mathcal S}^+(V, U), 
$$
be such that
\begin{align*}
E^+ =F^+G^+, \quad G^+ \neq V. \tag {2.2}
\end{align*}
By $(AQ1)$ then either
$$
 E^-F^+ \in {\mathcal S} ^-(V  ) \setminus\{  V\},
$$
in which case there would exist by $(AQ4-)$ an $F^- \in{\mathcal S} ^-(U, W) \setminus \{ U\},$ such that 
$$
E^- =  E^-   F^+F^-,
$$
contradicting the indecomposability of $E^-$, or
$$
E^-F^+ \in  {\mathcal S} ^+(W),
$$
in which case
$$
E^-F^+ G^+  \in {\mathcal S} ^+(V)\setminus \{  V\},
$$
contradicting (2.1) and (2.2). The symmetric argument shows also
for $U, W \in{\mathcal U}_{\mathcal S} $, and for an $E^+\in {\mathcal S} ^+(U, W)$, that is  indecomposable in  $ {\mathcal S}^+$, 
that there exists  an $E^-\in {\mathcal S} ^+ (U, W)$, that is is  indecomposable in  $ {\mathcal S} ^-$, such that 
$$
E^-E^+   = U. 
$$
It follows from (AP2) that the directed graph  
$(  {\mathcal U}_ {\mathcal S}   , {\mathcal E} ^- )$ is strongly connected.
We define relations $ {\mathcal R}(V, W)\subset {\mathcal E}^-(V, W)
 \times {\mathcal E}  ^+(V, W) $ by 
$$
  {\mathcal R}(V, W)=
\{  (E^-, E^+)\in {\mathcal E} ^-(V, W)
 \times {\mathcal E}  ^+(V, W) : E^- E^+ \neq 0 \},  \quad V, W \in  
  {\mathcal U}_{\mathcal S} ,
$$
and set
$$
{\mathcal R} = \bigcup _{V, W \in  {\mathcal U}_{\mathcal S}}{\mathcal R}(V, W).
$$
By $(AQ4)$ one has 
\begin{multline*}
E^-E^+ \ = 
\ \begin{cases} U, &\text {if $U  =  W, \  (E^-, E^+) \in {\mathcal R}(U, V),$} \\
0,&\text {if $U  =  W, \ (E^-, E^+) \not  \in {\mathcal R}(U, V), $}\\
0,&\text {if $U  \neq  W,$}
\end{cases} 
\\
E^-
\in  {\mathcal E}^-(U, V), E^+\in{\mathcal E} ^+(W, V),\quad U, V, W \in  {\mathcal U}_
{\mathcal S}.
\end{multline*}

It follows from $(AQ2)$, $(AQ5)$,  and $(AQ6)$ that for $F\in {\mathcal S} $ there exist $I(-), I(+) \in \Bbb Z_+$ and
$$
U_{i_+}(+)\in{\mathcal U}_{\mathcal S} , \ \  I_+\geq i_+\geq 1,\quad U\in {\mathcal U}_
,\quad U_{i_-}(-)\in{\mathcal U}_{\mathcal S} ,  \ \ 
1\leq i_-\leq I_-,  
$$
and, setting
$$
U_0(+) = U_0(-)= U,
$$
also
$$
E^+_{i_+}\in   {\mathcal E} ^+(U_{i_+}  , U _{i_+-1} ),  \ \  I_+\geq i_+\geq 1, \quad
E^-_{i_-}\in  {\mathcal E} ^+(U_{i_--1}  , U _{i_-} ), \ \  
1\leq i_-\leq I_-,  
 $$
such that
\begin{align*}
F=
(\prod_{I_+\geq i_+ \geq1} E^+_{i_+})U(\prod_{1\leq i_- \leq I_-} E^-_{i_-}). \tag {2.3}
\end{align*}
It follows from the assumption that  the projection of ${\mathcal S} $ onto $[{\mathcal S} ]$ is an isomorphism, that the ${\mathcal R}$-graph ${\mathcal G} _{\mathcal R}({\mathcal U}_ {\mathcal S}   , {\mathcal E} ^-,  {\mathcal E} ^+   )$ satisfies  $(a)$, since for $E, \widetilde {E} \in{\mathcal E} ^-(U, V)$ one has that $\Omega^+(E ) =   \Omega^+( \widetilde {E})    $ would imply that $[E] =[\widetilde {E}] , U, V \in {\mathcal U} _{\mathcal S} $. 
Applying  $(a-)$ and $(a+)$ repeatedly, and keeping in mind that one has for $H \in {\mathcal S}, U \in  {\mathcal U}_{\mathcal S} $ that $[H] =[U]$ implies that $H = U$, one finds that the presentation (2.3) of $F$ is in fact unique.
\end{proof}

%section 3
\section{Subshifts with property (A) to which ${\mathcal R}$-graph semigroups \\ are associated}

We introduce terminology and notation for subshifts. Given a subshift $X \subset \Sigma^
{{\Bbb Z}}$ we set
$$
x_{[i,k]}Ê=Ê(x_{j})_{i \leq j \leq k}, \quad  x \in X,  i,k\in {\Bbb Z} ,  i \leq k,
$$
and
$$
X_{[i,k]}Ê=Ê\{ x_{[i,k]} : x\in X \},\quad i,k\in {\Bbb Z} ,  i \leq k . 
$$
We use similar notation also for blocks,
$$
b_{[i',k']}Ê=Ê(b_{j})_{i' \leq j \leq k'}, \quad 	b\in X_{[i,k]}, \; i \leq i' \leq k' \leq k,
$$
and also if indices range in semi-infinite intervals. 
The symbol that denotes a block is also used to denote the word that is carried by the block. We identify the elements of $X_{(-\infty, 0)}$ with the left-infinite words that they carry, and we identify the elements of $X_{(0, \infty)}$ with the right-infinite words that they carry.
The set of admissible words of $X$ is denoted by $\mathcal L(X)$.
For the N- 
block 
system of a subshift $X \subset \Sigma^{{\Bbb Z}}$ we use the notations 
$$
x^{\langle(m,m+N]\rangle}Ê=Ê(x_{(i+m,i+m+N]})_{i\in{\Bbb Z}} , \qquad x\in X,
$$
$$
X^{\langle(m,m+N]\rangle}Ê=Ê\{ x^{\langle(m,m+N]\rangle} :  x\in X \}, 
\qquad
m \in {\Bbb Z} , N \in \Bbb N.
$$
We set
\begin{align*}
&\Gamma(a)Ê= \{(x^{-},x^{+}) \in  X_{(-\infty, 0)} \times X_{(0, \infty)}: (  x^{-} , a , x^{+} )  \in X\},
\\
&\Gamma_{n}^{+}(a)Ê= \{ b\in X_{(k,k+n]}: (a,b)\in X_{[i,k+n]}\} 
,\qquad n 
\in 
{\Bbb N} ,\\
&\Gamma_{\infty}^{+}(a)Ê= \{  y^{+}\in X_{(k,\infty )}:  (a, y^{+})\in 
X_{[i,\infty)}\} ,
\\
& \Gamma^{+}(a)Ê= 
 \bigcup_{n\in{\Bbb N}}\Gamma^{+}_{n}(a)Ê, \qquad a\in X_{[i,k]},	 i,k\in{\Bbb Z} ,  	i \leq k.
 \end{align*}
$ \Gamma^{-}$ has the time symmetric meaning. We set
\begin{align*}
&
\omega^{+}_{n}(a)Ê=
\bigcap_{x^{-} \in \Gamma_{\infty}^{-}(a)}
\{  b  \in X_{(k,k+n]}: (x^{-}, a,  b ) \in X_{( - \infty , k+n]}\},
\\
&\omega^{+}_{\infty}(a)Ê=Ê
\bigcap_{x^{-} \in \Gamma_{\infty}^{-}(a)}
\{ y^{+} \in X_{(k,\infty)}: (x^{-}, a, y^{+}) \in X\},
\\
& \omega^{+}(a)Ê= 
 \bigcup_{n\in{ \Bbb N}}\omega^{+}_{n}(a)Ê, \qquad a\in X_{[i,k]},	 i,k\in{ \Bbb Z} ,  	i \leq k.
\end{align*}		
$\omega^{-}$ has the time symmetric meaning. 

We recall that, given subshifts $X \subset \Sigma^{{\Bbb Z}},  \widetilde{X}\subset \widetilde{\Sigma}^{{\Bbb Z}}$,
and a  topological conjugacy $\varphi: X \rightarrow \widetilde{X} 
$, 
there is 
for some 
$L\in {\Bbb Z}_{+} $ a block mapping
$$
\Phi:  X_{[-L,L]} \rightarrow \widetilde{\Sigma } 
$$
such that
$$
\varphi (x)Ê=Ê(\Phi (x_{[i-L,i+L]}))_{i\in{\Bbb Z}} .
$$
We say then that $\varphi$ is given by $\Phi$, and we write
$$
\Phi( a)Ê=Ê(\Phi (a_{[j-L,j+L]}))_{i+L\leq j\leq k-L},	\quad a\in 
X_{[i,k]},  \quad i,k\in {\Bbb Z} ,	k - i \geq 2L,
$$
and use similar notation if indices range in semi-infinite intervals.

For a subshift $X\subset \Sigma ^{\Bbb Z}$ set
$$
 A_{n}^{-}(X) = \bigcap _{i\in {\Bbb Z}}
 \{x  \in X: 
x_{i} \in \omega^{+} (x_{[i - n, i)})\},\quad n \in {\Bbb N},
$$
and
$$
A^{-}(X) = \bigcup_{n \in{\Bbb N}} A_{n}^{-}(X) .
$$
Define $  A_{n}^{+}(X), n \in{ \Bbb N},$ and $ A^{+}(X)  $  symmetrically and set
$$
 A_{n}(X) = A_{n}^{-}(X)  \cap A_{ n}^{+}(X) ,\quad n \in {\Bbb N},
$$
and 
$$
A(X) = \bigcup_{n \in {\Bbb N}} A_{n}(X).
$$

Denote the set of periodic points in $A(X)$ by $P(A(X))$. 
We write for 
$q ,r  \in P(A(X)), q \succeq  r$, 
if there exists a point in $A(X) $ that is left asymptotic to  
$q$ and
right asymptotic to a point in the orbit of $r$. The equivalence relation that results from the preorder relation
$\succeq $ we write as
 $\approx$ and we denote the order relation that results from $\approx$ also by 
 $\succeq $.
We denote the set of $\approx$-equivalence classes by ${\frak P} (X)$. The order structure $({{\frak P}} (X)  ,\succeq )$ is invariantly associated to $X$ \cite {Kr2}. 
We denote by $Y(X)$ the set of points $x\in X$, such that there are $q, r \in P(A(X))$, such that  $x$ is left asymptotic to $q$ and right asymptotic to $r$.

We write a condition (DP0) on a subshift $X\subset \Sigma ^{\Bbb Z}$, that is invariant under topological conjugacy \cite {Kr2}:

\bigskip
\noindent
$\bold{(DP0)}$ $P(A(X))$ is dense in X. 

\bigskip
The following conditions (DP1) and (DP2) on a subshift $X\subset \Sigma ^{\Bbb Z}$ are invariant under topological conjugacy. Condition (DP1) is the  translation of Conditon (AP1) and Condition (DP2-) is the translation of Conditon (AP2-):

\bigskip
\noindent
$\bold{(DP1)}$ ${\frak P} (X)$ is a finite set.

\bigskip
\noindent
$\bold{(DP2-)}$ For $q, r\in P(A(X))$ there exists an $x \in A^-(X)$ that is left asymptotic to 
$q$ and right asymptotic to $r$.

\noindent

\bigskip

We introduce a Condition (DQ1) and a Condition (DQ2) on a subshift $X\subset \Sigma ^{\Bbb Z}$. Condition (DQ1)  is the translation of 
Condition (AQ1) and Condition (DQ2)  is the translation of 
Condition (AQ2):
 
\bigskip
\noindent
$\bold{(DQ1)}$ For $p \in P(A(X))$, there exists an $H \in {\Bbb N}$, such that, if
$$
y \in A^-(X) \cap   Y (X), \quad y_{(-H, \infty)} = p_{(-H, \infty)} , 
$$
$$
z \in A^+(X) \cap  Y (X),\quad z_{(- \infty,H]} = 
p_{(- \infty,H]}  , 
$$
 and
$$
(y_{(-\infty, 0]} , z_{(0, \infty)}) \in X,
$$
 then 
$$
(y_{(-\infty, 0]} , z_{(0, \infty)}) \in A^-(X) \cup  A^+(X). 
$$

\bigskip
\noindent
$\bold{(DQ2)}$ Given $q, r \in P(A(X))$, there exists an $H \in \Bbb N$, such that the following holds:
For $x \in   X$ and $K >H$, such that
$$
x_{(-\infty, -K]} = q_{(-\infty, 0]}, \quad x_{(K, \infty)} = r_{(0, \infty)},
$$
 and for $M \in \Bbb N$, there exist  $p \in P(A(X)), I, J > M,$ and
$$
y \in  A^+(X) \cap   Y (X) ,
 \quad z  \in A^-(X) \cap   Y (X) ,
$$
such that
$$
y_{(-\infty, -I]}   = q_{(-\infty, 0]}  , \quad y_{(M, \infty)}   = p_{(M, \infty)} ,
$$
$$
z_{(-\infty, M]} = p_{(-\infty, M]} ,    \quad z_{(J, \infty)}   = r_{( 0, \infty)} ,
$$
and such that
$$
\Gamma (x_{(-K -M, K+ M]}  ) =
 \Gamma((y_{(-I-H, M]}, z_{(M, J+ H]})  ) .
$$

\bigskip
Next we introduce a Condition (DQ3) and a Condition (DQ4) on a subshift $X\subset \Sigma ^{\Bbb Z}$. Again these  conditions come in two symmetric parts, of which we write only one.
Condition (DQ3) is the translation of Condition (AQ3) and Condition (DQ4) is the translation of Condition (AQ4):

\bigskip
\noindent
$\bold{(DQ3 - )}$ For $p, q \in P(A(X))$ and
$
x \in A^-(X) \cap  Y (X),$ that is left asymptotic to $p$, and such that
 $ x_{ (0, \infty)} =
p_{ (0, \infty)} ,
$
and for $M \in {\Bbb N}$ there exists a $y \in A^+(X) \cap  Y (X),$ that is right asymptotic to  a point in the orbit of $q$, such that 
$$
y_{ (- \infty, M]} =
p_{ (- \infty, M]} ,
$$
and such that
$$
(x_{ (- \infty, 0]} , y_{ (0.\infty)} ) \in A(X).
$$

\bigskip
\noindent
$\bold{(DQ4-)}$
For $q, r \in P(A(X))$ there exist an $H\in {\Bbb N}$ such that for $K_+, M> H$ and for 
$p \in P(A(X)), x \in A^-(X),y \in A^+(X)$ and $I, K_- \in \Bbb N,$ such that
$$
x_{(-\infty, - K_-]}  = p_{(-\infty, - K_-]} , \qquad  x_{(0, \infty)}  = q_{(0, \infty)} ,
$$
$$
y_{(-\infty, K_+]}= q_{(-\infty, K_+]} , \quad y_{(K_+ + I, \infty)}= r_{(K_+ + I,\infty)} ,
$$
and
$$
( x_{(-\infty, K_+]},  y_{(K_+, \infty)}) \in  A^-(X)  ,
$$
there exist a $z \in  A^-(X), $ and  $J, D\in{ \Bbb N}$, such that
$$
z_{(-\infty, K_++I + M]}= r_{(-\infty, 0]}, \qquad z_{(K_++I + M+J, \infty)}= q_{(0, \infty)} ,
$$
and
$$
 \Gamma(  x_{(- K_--D, K_+]}  , y_{(K_+, K_+ + I]}   ,  z_{(K_+ + I,K_+ + I+M+ J+D]}    )
=\Gamma(  x_{(- K_--D,K_+ + D ]}   ) .
$$

\bigskip

We call a point $x \in A^-(X)\cap Y(X)$ indecomposable, if there is an $H \in{ \Bbb N}$, such that
 the following holds for $K> H$:
For $p\in P(A(X))$ and $I \in{ \Bbb Z}$ such that 
$$
x_{(I, \infty)}= p_{(I, \infty)},
$$
and 
$$
y \in  A^+(X)\setminus A(X) , 
$$
such that
$$
y_{(-\infty, I+K]}= p_{(-\infty, I+K]},
$$
one has that
$$
(x _{(-\infty, I]}   , y_{(I, \infty)}  )\in X,
$$
implies that
$$
(x _{(-\infty, I]}   , y_{(I, \infty)})   \in A^+(X).
$$
The indecomposable points in $A^+(X)\cap Y(X)$ are defined symmetrically.

With the notion of an indecomposable point we  translate Condition (AQ5) into a Condition (DQ5):

\bigskip
\noindent
$\bold{(DQ5 - )}$ There exists an $M \in \Bbb N$ such that for $q, r \in P(A(X))$ and  an indecomposable point $x\in A^-(X) \cap  Y (X)$ and for $I_-, I_+ \in {\Bbb Z}, I_-< I_+$, such that
$$
x_{(-\infty,  I_-]}   = q_{-\infty,  0]} , \quad  x_{(I_+, \infty)} = r_{(0, \infty)},
$$
there exists an indecomposable pont $y  \in A^-(X) \cap Y (X)$ and $J_-, J_+ \in{ \Bbb Z}, J_-< J_+$, and $D \in \Bbb N$, such that
$$
J_+-J_-\leq M,
$$
$$
y_{(-\infty,  J_-]}   = q_{-\infty,  0]} , \quad  y_{(J_+, \infty)} = r_{(0, \infty)},
$$
and
$$
\Gamma( x_{( I_-   -D,  I_+ +D]}   ) = \Gamma(y_{( J_- -D,  J_+ +D]}).
$$

\bigskip
We have a Condition (DQ6) that also consists of two parts (DQ6$-$) and (DQ6$+$), that are symmetric to one another:

\bigskip
\noindent
$\bold{(DQ6 - )}$ If $x^{-}\in X_{(-\infty, 0 ]}$ is such that for all $M\in \Bbb N$  there is an $I> M$ such that
$$
x^{-}_{- I} \notin \omega^{-}_{1}(x^{-}_{(-I ,0]}),
$$
then for $y^{-}_{(-\infty, M]} \in X_{(-\infty, J} , J \in{ \Bbb N}$, such that 
$$
y^{-}_{(-\infty, 0]} =x^{-}_{(-\infty, 0]},
$$
and for all  $M\in \Bbb N$  there is an $I> M$ such that
$$
y^{-}_{- I} \notin \omega^{-}_{1}(y^{-}_{(-I ,J]}). 
$$

\bigskip

Condition (DQ6$-$) appeared in connection with the Cantor horizon of the Dyck shift \cite {KM1}.
Inspection shows that conditions (DQ1$-$) and (DQ2$-$), and therefore also conditions (DQ1$+$) and (DQ2$+$),  are invariant under topological conjugacy. Also, a topological conjugacy maps indecomposable points to indecomposable points. As a consequence, Condition  (DQ5$-$), and therefore also Condition (DQ5$+$),  is invariant under topological conjuagacy. We prove that  Condition (DQ6$-$) is invariant under topological conjugacy.

%proposition 3.1
\begin{proposition}
Let $X\subset \Sigma^{{\Bbb Z}}, \widetilde {X}\subset \widetilde{\Sigma}^{{\Bbb Z}},$ be subshifts an let 
$\varphi: X \to  \widetilde{ X} $ be a topological conjugacy.
Let $X$ satisfy Condition  (DQ6$-$). Then $\widetilde {X}$ also satisfies Condition (DQ6$-$).
\end{proposition}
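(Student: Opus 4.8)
The plan is to transport the condition through the block maps of $\varphi$ and $\varphi^{-1}$. I would choose $L\in\Bbb Z_+$ so that $\varphi$ is given by a block map $\Phi\colon X_{[-L,L]}\to\widetilde\Sigma$ and $\varphi^{-1}$ is given by a block map $\Psi\colon\widetilde X_{[-L,L]}\to\Sigma$. The point I would rely on is the standard fact that a topological conjugacy carries the future-context structure over with only a bounded shift of coordinates: for an admissible block the sets $\Gamma^+$ in $X$ and in $\widetilde X$ determine one another through $\Phi$ and $\Psi$, and hence the membership relations defining $\omega^-_n$ are preserved up to enlarging $n$ by an amount controlled by $L$. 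I would first record this correspondence as the technical core, reformulating the hypothesis $x^-_{-I_k}\notin\omega^-_1(x^-_{(-I_k,0]})$ in the equivalent form that prepending the symbol $x^-_{-I_k}$ strictly diminishes the future context of $x^-_{(-I_k,0]}$, that is $\Gamma^+_\infty(x^-_{[-I_k,0]})\subsetneq\Gamma^+_\infty(x^-_{(-I_k,0]})$.

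Now suppose $\widetilde x^-\in\widetilde X_{(-\infty,0]}$ and an increasing sequence $(\widetilde I_k)_{k\in\Bbb N}$ satisfy the hypothesis of (DQ6$-$) in $\widetilde X$, and let $\widetilde y^-_{(-\infty,\widetilde J]}$ agree with $\widetilde x^-$ on $(-\infty,0]$; enlarging $\widetilde J$ is harmless since the conclusion depends only on $\widetilde y^-_{[-\widetilde I_k,0]}$. After the appropriate shift of coordinates I would set $x^-=\Psi(\widetilde x^-)\in X_{(-\infty,-L]}$ and $y^-=\Psi(\widetilde y^-)$, so that $y^-$ agrees with $x^-$ on $(-\infty,-L]$. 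Transporting the reformulated hypothesis through $\Psi$, the strict reduction of the $\widetilde X$-future-context at $-\widetilde I_k$ becomes a strict reduction of the $X$-future-context obtained by prepending the length-$(2L{+}1)$ block that $\Psi$ reads around $-\widetilde I_k$; telescoping this strict inclusion over the symbols of that block produces, for each $k$, at least one coordinate $-I_k$ with $\widetilde I_k-c\le I_k\le\widetilde I_k+c$ (with $c$ depending only on $L$) at which $x^-_{-I_k}\notin\omega^-_1(x^-_{(-I_k,0]})$. Passing to a subsequence I obtain an increasing sequence $(I_k)$ with this property, so that $x^-$ and $(I_k)$ satisfy the hypothesis of (DQ6$-$) in $X$.

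Applying (DQ6$-$) for $X$ to $x^-$, the sequence $(I_k)$, and $y^-$, I get a $k_\circ$ with $y^-_{-I_k}\notin\omega^-_1(y^-_{(-I_k,0]})$ for $k\ge k_\circ$. Finally I would push this back to $\widetilde X$ through $\Phi$: by the same future-context correspondence, the strict reduction of the $X$-future-context at $-I_k$ forces a strict reduction of the $\widetilde X$-future-context at a coordinate lying within bounded distance of $-\widetilde I_k$, and localizing as before yields $\widetilde y^-_{-\widetilde I_k}\notin\omega^-_1(\widetilde y^-_{(-\widetilde I_k,0]})$ for all large $k$, which is the conclusion of (DQ6$-$) for $\widetilde X$.

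The step I expect to be the main obstacle is reconciling the single-symbol radius in the definition of $\omega^-_1$ with the window radius $L$ of the conjugacy: since one symbol of $\widetilde X$ corresponds to a block of $X$ and conversely, the non-forcing of a single symbol does not transform literally into non-forcing of a single symbol, and one must argue that a strict reduction of the future context by a bounded block can always be localized to a single informative coordinate. Controlling the coordinate bookkeeping so that these localized positions still form an increasing sequence whose image stays cofinally close to $(\widetilde I_k)$, and checking that the eventual (``$k\ge k_\circ$'') form of the conclusion survives the passage to subsequences in both directions, is where the care is needed; the infinitude of the sequence together with $\widetilde I_k\to\infty$ is precisely what makes this bookkeeping go through.
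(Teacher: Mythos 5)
Your proposal is correct and is essentially the paper's own argument: transport the non-forced coordinates of $\widetilde{x}^-$ into $X$ through the block map of $\varphi^{-1}$ (localizing the resulting block non-forcing to a single coordinate within distance $L$ of the original one — your telescoping step), apply (DQ6$-$) in $X$ to $y^-=\Phi(\widetilde{y}^-)$, and transport the resulting non-forced coordinates back to $\widetilde{X}$ with one more localization. The obstacle you single out — that localization only returns a coordinate within bounded distance of the prescribed one, not the prescribed coordinate itself — is treated no more strongly in the paper: its concluding display (3.3) asserts non-forcing only at some $\widetilde{I}$ with $-I-L<\widetilde{I}\leq -I+L$, so (DQ6$-$) is read there in the same coordinate-robust sense (cofinally many non-forced positions persist under right extension) that your bookkeeping remarks describe.
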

\begin{proof}
Let $L\in {\Bbb Z}_+$ be such that $[-L, L]$ is a coding window for $\varphi$ and $\varphi^{-1}$, and let $\varphi^{-1}$ be given by the block map $\Phi: \widetilde {X}_{[-L. L]}
\to \Sigma$. 

Let $\widetilde {x}^- \in \widetilde {X}_{(-\infty  ,  0]}$, and assume that for all $\widetilde{M}\in \Bbb N$ there is an $\widetilde{I}>\widetilde{M} $, such that
\begin{align*}
\widetilde {x}^-_{\widetilde{I}} \notin \omega^-_1(\widetilde {x}^-_{(-\widetilde{I}, 0]}) . 
 \tag {3.1}
\end{align*}
Set
$$
x^- = \Phi( \tilde x^-).
$$
For
$ MÊ\in \Bbb N$, setting $\widetilde{M}= M + L$, one has as a consequence of  (3.1), that there is an $\widetilde{I} > M + L$, such that
$$
x_{[\widetilde{I}-L, \widetilde{I}+L]}\notin \omega^-_{2L+1}(x_{( \widetilde{I}+L ,-L]}   ),
$$
and it follows that there is an 
$$
I\in [\widetilde{I}-L,\widetilde{I}+L],
$$
such that
$$
x_{-I} \notin \omega^-_1(x_{( -I , -L  ]}).
$$

To confirm $(DQ6-)$ for $ \widetilde {X}$, let
$$
 \quad \widetilde{ y}^- \in \widetilde {X}_{(-\infty, J]}, \quad J \in {\Bbb N}, 
$$
such that 
$$
 \widetilde {y}^- _{(\infty, 0]} =  \widetilde{ x}^-,
$$
and let $ \widetilde {M}\in \Bbb N$.
Set 
$$
y^- = \Phi (  \widetilde {y}^- ).
$$
As we have seen at the beginning of the proof,  we can apply  $(DQ6-)$ of $X$ to $x^-$, and we find that there is an
\begin{align*}
I >  \widetilde {M} + L,  \tag {3.2}
\end{align*}
such that
$$
y^-_{-I}\notin \omega_1^-(y^-_{(-I, J-L]}).
$$
This implies that
$$
\widetilde {y}^-_{(-I-L,-I+L]}\notin \omega_{2L+1}^-(\widetilde{ y}^-_{(-I+L, J]}).
$$
It follows that there exists an 
\begin{align*}
\widetilde{ I} \in[-I - L, -I + L], \tag {3.3}
\end{align*}
 such that
\begin{align*}
\widetilde {y}^-_{\widetilde {I}} \notin \omega_{1}^-
( \widetilde{ y}^-_{(-\widetilde{ I}, J  ]}). \tag {3.4}
\end{align*}
By (3.2), (3.3) and (3.4)  (DQ6$-$) is satisfied by $\widetilde { X}$.
\end{proof}

We recall from \cite {Kr2} the definition of property (A).
For $n \in \Bbb N$ a subshift $X \subset  \Sigma ^{{\Bbb 
Z}}$ such that $A_{1}(X) \neq \emptyset$,  
has property $(a, n, H),H \in{\Bbb N},$  if for 
$h,\widetilde {h} \ge 3H$ and for
$$
a \in A_{n}(X)_{[1,  h ]}, \quad \widetilde{a} \in A_{n}(X)_{[1,
 \widetilde{h} ]}, 
$$
such that
$$
a_{[1,  H]} = \widetilde{a}_{[1,  H]},\quad
a_{( h - H,  h ]} = \widetilde{a}_{(\widetilde{h} - H, \widetilde{h}]},
$$
one has that $a$ and $\tilde{a}$ have the same context. A subshift $X\subset  \Sigma ^{{\Bbb Z}}$
has Property $(A)$ if there are $H_{n}, n \in {\Bbb N}$, such
that $X$ has the properties $(a, n, H_{n}), n \in {\Bbb N} $.

We also recall the construction of the associated semigroup. 
Let $y, \widetilde{y}\in Y(X),$ let $y$ be left asymptotic to $q \in P(A(X))$ and right 
asymptotic to $r \in P(A(X)),$ and let $\widetilde {y}$ be left asymptotic to $ \widetilde{q} \in P(A(X))$ and right
asymptotic to $ \widetilde{r} \in P(A(X))$. Given that $X$ has the properties $(a, n, H_{n}), n \in \Bbb N,$ we say
that $y$ and $\widetilde {y}$ are equivalent, $y \approx \widetilde {y}$, if $q \approx
\widetilde{q}$ and $ r \approx \widetilde{r}$, and  
if for $n \in \Bbb N$ such that $ q, r, \widetilde {q}, \widetilde {r} 
\in A_{n}(X)$ and  for $I, J, \tilde
{I}, \widetilde {J} \in \Bbb Z,  I < J, \widetilde {I}< \widetilde{J},$ such that
$$
y_{(- \infty, I]} = q_{(- \infty, 0]},\quad  y_{(  J,\infty)} = r_{(  0,\infty)},
$$
$$
\widetilde{y}_{(- \infty, \widetilde{I}]} = \widetilde{q}_{(- \infty, 0]},\quad \widetilde{ y}_{(  \widetilde{J},\infty)} = \widetilde{r}_{(  0,\infty)},
$$
one has for $h\geq 3 H_{n}$ and for 
$$ 
a \in X_{(I  -  h,J + h]}, \quad
\widetilde {a} \in X_{(\widetilde {I} -  h,\widetilde {J} + h]}, 
$$
such that  
$$
a _{(I-  H_{n} ,  J + H_{n} ]} = y_{(I - H_{n} ,  J + H_{n} ]} ,\quad
 \widetilde {a} _{(\widetilde {I} -  H_{n} ,  \widetilde {J} + H_{n} ]} =
\tilde{y}_{(\tilde {I} -  H_{n} ,  \tilde {J} + H_{n} ]},
$$
$$
a _{(I - h ,  I  - h + H_{n})} = \widetilde {a} _{(\widetilde {I} -
h , \widetilde {I}  - h+ H_{n} )},
$$
$$
a _{(J + h- H_{n} ,  J  + h ]} =
 \widetilde {a} _{(\widetilde {J }+ h - H_{n}, 
\widetilde { J}  + h ]},
$$
and such that
$$
a_{(I - h, I]} \in A_{n}(X)_{(I - h, I]} , \quad
\widetilde{a}_{( \widetilde{I}- h, \widetilde{I}]}\in A_{n}(X)_{( \widetilde{I}- h, \widetilde{I}]}\ ,
$$
$$
 a_{(J , J+h]} \in A_{n}(X)_{(J , J+h]}  , \quad
  \widetilde{a}_{( \widetilde{J}  , \widetilde{J}+h]}  \in A_{n}(X)_{( \widetilde{J}  , \widetilde{J}+h]} ,
$$
that $a$ and $\widetilde {a}$ have the same context.
To give $[Y(X)]_{\approx}$ the
structure of a semigroup ${\mathcal S}(X)$, that is invariantly associated to $X$, 
let $u, v \in Y(X)$, let $u$ be right asymptotic to
$q \in P(A(X))$ and let  $v$ be left asymptotic to $r \in P(A(X))$. 
If here 
$q \succeq r $, then set $[u]_{\approx}[v]_{\approx}$  equal to $[y]_{\approx}$,
where $y$ is any point in $Y(X)$ such that there are
$n \in \Bbb N, I, J ,\widehat {I},  \widehat {J} \in \Bbb Z , I < J, \widehat 
{I}<   \widehat {J}, $ such that $ q, r \in A_{n}(X),$  and such that
$$
u_{(I, \infty)} = q_{(I, \infty)}, \quad
v_{(-\infty, J]} = r_{(-\infty, J]},
$$  
$$
y_{(- \infty,\widehat{I} + H_{n}]}= u_{(- \infty,I + H_{n}]} ,\quad
y_{ (\widehat{J}  - H_{n}, \infty)}= v_{ (J - H_{n}, \infty)},
$$
and 
$$
y_{(\widehat{I}  , \widehat{J}  ]}\in A_{n}(X)_{(\widehat{I}  , \widehat{J}  ]},
$$
provided that such a point $y$ exists. If such a point $y$ does not exist,  set
$[u]_{\approx}[v]_{\approx}$  equal to zero.
Also, in the case that 
$q\not \succeq r, $ set $ [u]_{\approx}[v]_{\approx}$ 
 equal to zero.

For a subshift with  Property (A) conditions (AP1), (AP2), (AQ1), (AQ 2) and (AQ4)  are equivalent to their translations.
We prove that for a subshift with  Property (A)  conditions (DQ4) and  (DQ6) together imply  Condition (AQ6).

%proposotion 3.2
\begin{proposition}
Let the subshift $X$ have property (A) and let conditions (DQ3),  (DQ4) and (DQ6) be satisfied by $X$.
Then ${\mathcal S}(X)$ satisfies Condition (AQ6).
\end{proposition}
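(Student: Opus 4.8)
The plan is to prove the condition (AQ6$-$), that $\mathcal{S}^-$ is generated by its indecomposable elements; the statement (AQ6$+$) then follows by the time-symmetric argument from (DQ4$+$) and (DQ6$+$). Recall that, for a subshift with Property (A), the elements of $\mathcal{S}^-$ are represented by the $\approx$-classes of points of $A^-(X) \cap Y_X$, and that the idempotents $\mathcal{U}_{\mathcal{S}(X)}$ are the $\approx$-classes of the periodic points in $P(A(X))$. Accordingly I would fix $F \in \mathcal{S}^- \setminus \mathcal{U}_{\mathcal{S}(X)}$ and a representative $x \in A^-(X) \cap Y_X$, left asymptotic to $q \in P(A(X))$ and right asymptotic to $r \in P(A(X))$, and produce a factorization $F = E_1 \cdots E_m$ into indecomposable elements.

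First I would isolate the admissible factorization sites of $x$. A position $-I$ is such a site when $x_{-I} \notin \omega^-_1(x_{(-I, 0]})$, i.e. when the symbol at $-I$ is not forced by the entire future context; dynamically these are exactly the positions at which $x$ can be cut and re-routed through a periodic point of $P(A(X))$ while remaining in $A^-(X)$. Because $x \in Y_X$ coincides with $q_{(-\infty, 0]}$ to the far left and with $r_{(0,\infty)}$ to the far right, the sites lying in the two periodic tails produce only the idempotent factors $[q]_\approx$ and $[r]_\approx$, so only the finitely many sites inside the non-periodic middle block of $x$ can contribute genuine (non-idempotent) factors. This already bounds the number of factors and yields a candidate ordered decomposition of $x$ into atomic blocks, the blocks being the segments of $x$ delimited by consecutive genuine sites.

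The substantive step is to see that each atomic block represents an indecomposable element and that the product of these elements is $F$. For the former, I would invoke (DQ6$-$): it guarantees that the factorization sites of the left-infinite word $x_{(-\infty, 0]}$ are stable under every admissible completion of the future, so that a block admitting no internal site relative to $x$ admits none relative to any competing representative, whence the corresponding element cannot be split non-trivially in $\mathcal{S}^-$ and is indecomposable in the sense of Section 2. For the latter I would use (DQ4$-$): applied at each site, in its translated form it supplies, after matching the overlapping periodic windows, a point $z \in A^-(X)$ and indices realizing the equality $\Gamma^+(x_{(-\infty, K]}) = \Gamma^+(x_{(-\infty, K]}, y_{(K, J+K]}, z_{(I, J+K]})$, which is precisely the equality of contexts needed to certify, block by block, that the concatenation of the atomic pieces is $\approx$-equivalent to $x$; equivalently, that the product of the classes of the atomic blocks equals $F$ in $\mathcal{S}(X)$.

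I expect the main obstacle to be the interlocking of these two steps: proving that the dynamically defined atomic blocks coincide with the algebraically indecomposable elements, and simultaneously that their product recovers $F$, requires choosing the constants $H$ furnished by (DQ4$-$) and (DQ6$-$) together with the cut indices compatibly for all sites at once, and then checking that the $\Gamma^+$-equalities survive the successive gluings. Here Property (A), through the properties $(a, n, H_n)$, is what keeps the context equivalences uniform as the blocks are reassembled, so that the finiteness established in the second paragraph and the robustness of the sites given by (DQ6$-$) can be combined into the required finite factorization of $F$ into indecomposables.
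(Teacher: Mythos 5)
Your proposal takes a genuinely different route from the paper, and it has a gap at its central step. The paper argues by contradiction: if (AQ6) fails there is an infinite chain of proper factorizations $H^-_m = H^-_{m-1}G^-_m$ in ${\mathcal S}^-(X)$ (or its mirror $H^-_m = G^-_mH^-_{m-1}$, which is reduced to the first case by (DQ4$+$) --- that is the only role (DQ4) plays). It then realizes this chain inside a single left-infinite word $x^-$, using right-inverses $G^+_m$ (with $G^-_mG^+_m = U_m$) and ``killers'' $\widetilde{G}^+_m$ (with $G^-_m\widetilde{G}^+_m = 0$) to build witnesses $a^{(M)}$ showing that $x^-$ has a position $i$ in each block of the chain with $x^-_i \notin \omega^-_1(x^-_{(i,0]})$, hence infinitely many non-forced positions; finally it produces a finite word $b$ that remains admissible after all of $x^-$, and this is incompatible with the persistence of non-forced positions under right-extension that (DQ6$-$) guarantees. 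Your proof is instead direct: cut one representative $x$ of $F$ at its non-forced sites and claim the resulting atomic blocks are indecomposable.

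The gap is precisely that claim. Indecomposability of the class of a block $B$ quantifies over all factorizations $[B]_{\approx} = GH$ in ${\mathcal S}(X)$, and such a factorization is witnessed by some \emph{other} representative $z$, obtained by gluing representatives of $G$ and $H$ through a long stretch of a periodic point; $z$ is only $\approx$-equivalent to a rerouting of $B$, it is not a future-completion of the left-infinite word $x_{(-\infty,0]}$. Condition (DQ6$-$) compares a fixed left-infinite word with its finite right-extensions; it says nothing about transferring forced or non-forced positions between distinct $\approx$-equivalent points. So your sentence ``a block admitting no internal site relative to $x$ admits none relative to any competing representative'' is exactly the step that (DQ6$-$) does not deliver. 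Without it, your finitely many atomic factors may themselves be decomposable, and iterating the decomposition on new representatives comes with no a priori bound; this possible non-termination of iterated factorization is precisely the failure mode of (AQ6), and ruling it out is what the paper's contradiction argument accomplishes. Relatedly, your finiteness observation (finitely many sites in the non-periodic middle of $x$) is true of that particular representative but carries no semigroup content by itself, for the same reason.
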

\begin{proof}
If ${\mathcal S}(X)$ does not satisfiy  (AQ6$-$) then there exist
$$
H^-_m, G^-_m\in{\mathcal S}^-(X)\setminus \mathcal U_{\mathcal S(X)}, \qquad m \in{ \Bbb Z_+},
$$
such that
\begin{align*}
H^-_m = H^-_{m+1}G^-_m, \quad m \in{ \Bbb Z_+}, \tag {3.5}
\end{align*}
or such that
\begin{align*}
H^-_m= G^-_mH^-_{m+1}, \qquad m \in {\Bbb Z_+}. \tag {3.6}
\end{align*}
Assume (3.5). Let $U\in {\mathcal U}_{\mathcal S}$
be given by 
\begin{align*}
UH_0^-= H_0^-,
\end{align*}
and let  
$U_m\in {\mathcal U}_{\mathcal S}, m \in{ \Bbb Z}_+,$
be given by
$$
U_{m+1}G^-_m = G^-_m, \qquad m \in{ \Bbb Z}_+.
$$
By (DQ3$-$) there exists an $H^+ \in {\mathcal S}^+(X)(U, U_{0})
\setminus \mathcal U_{\mathcal S(X)},$ such that
$$
H^-_0H^+ = U,
$$
and  also 
$
G^+_m \in {\mathcal S}^+(X)(U_{m+1}, U_{m}) \setminus \mathcal U_{\mathcal S(X)}, 
$
such that
\begin{align*}
G^-_mG^+_m=U_m,\quad m \in{ \Bbb Z}_+. \tag{3.7}
\end{align*}
One has also $G_m \in \mathcal S(X) \setminus \mathcal U_{\mathcal S(X)},   m \in{ \Bbb Z}_+,$ such that
\begin{align*}
 U G_m = G_m,       \quad  G^-_mG_m=     0,      \qquad  m \in{ \Bbb Z}_+,          \tag {3.8}
\end{align*}
for otherwise one would have for some $m_\circ \in \Bbb N, $ that $G_{m_\circ} \in   \mathcal U_{\mathcal S(X)}$.
Let  
$V_m\in {\mathcal U}_{\mathcal S(X)}, m \in{ \Bbb Z}_+,$
be given by
$$
G_mV_m = G_m, \qquad m \in{ \Bbb Z}_+.
$$
Let $\Delta$ be a system of representatives of $\approx$-equivalence in $P(A(X))$, let $n\in\Bbb N$ be such that every point in $\Delta$ has a period that is less or equal to $n$, let  
$H_n\in\Bbb N $ be such that $X$ has Property $(a,n, H_n)$, and set $h = 3 H_n.$ In view of
$$
 \mathcal U_{\mathcal S(X)} = \frak P(X),
$$
we can choose a $ p \in \Delta$, such that
$$
U = [p]_\approx,  
$$
and also $p^{(m)},  q^{(m)}\in \Delta, m \in{ \Bbb Z}_+$, such that
$$
U_m = [p^{(m)}]_\approx, \quad V_m = [ q^{(m)}]_\approx,\qquad  m \in{ \Bbb Z}_+,
$$
together with an $J> n$, and  
 $J_m^-,J_m^+, K_m \in \Bbb N, m \in \Bbb N, $
such that one has, setting
$$
J_0^-=J_0^+ =0,
$$
that
$$
J_m^->J_{m-1}^-+ h, \qquad J_m^+>J_{m-1}^++ h, \qquad m \in \Bbb N,
$$
and also
$$
K_m>J_{m-1}^++ h,  \qquad m \in \Bbb N,
$$
together with
$
 a\in X_{( 0,J+ h]}
$
such that
\begin{align*}
a_{( 0 ,   h ]} = p^{(0)}_{(0 ,   h  ]},
\quad
a_{( J ,J+   h ]} = p_{(0 ,   h  ]}, \tag {3.9}
\end{align*}
\begin{align*}
[(p^{(0)}_{(-\infty, h]}, a_{(  h,  J ]}, p_{(0, \infty}  )]_\approx = H_0^+, 
\end{align*}
and 
$
x^- \in X_{( - \infty, h]}
$
such that
\begin{align*}
&x^-_{( J_m^-   -h ,   J_m^-  ]} = p^{(m)}_{(-h ,   0  ]}, \qquad m \in \Bbb Z_+, \tag {3.10} \\
&[(p^{(m)}_{(-\infty, h]}, x^-_{( -J^-_m  + h_, - J_{m-1}]}, p^{(m-1}_{(0, \infty}  )]_\approx = G^-_m,  \qquad  m \in \Bbb N,
\end{align*}
and
$
a^{(m)} \in X_{(0, K_m + h]}, m \in \Bbb N,
$
such that
\begin{align*}
&a^{(m)}_{( J_\ell^+  ,    J_\ell^+ +h ]} = p^{(\ell)}_{(0 ,h ]}, \tag {3.11}
\\
&[(p^{(\ell)}_{(-\infty, h]}, a^{(m)}_{(J^+_\ell  + h,  J^+_{\ell + 1} ]}, p^{(\ell +1)}_{(0, \infty}  )]_\approx = G^+_\ell, \qquad 0 \leq \ell < m,
\\
&a^{(m)}_{( K_m   ,   K_m +h  ]} = q^{(m)}_{(0 ,   h  ]}, \tag {3.12}
\\
&[(p^{(m)}_{(-\infty, h]}, a^{(m)}_{( J^+_m  + h_, K_m ]}, q^{(m)}_{(0, \infty}  )]_\approx = G_m,  \qquad  m \in \Bbb N.
\end{align*}
By  (3.7), (3.10) and (3.11), and by the choice of $h$, 
$$
(x^-_{(-J_{m-1}^-,  0]}, a^{(m)}) \in X_[{-J_{m-1}^-,  K_{m}+h] },
$$
and by (3.8), (3.10) and (3.12), and by the choice of $h$, 
$$
(x^-_{(-J_m^-,  0]}, a^{(m)}) \not\in X_{(-J_m^-,  K_{m}+h] }.
$$
It follows that there is an $i \in (  J_{m-1}^-  ,   -J_m^-    ]$
such   that
$
x^-_{-i} \notin \omega_1^-(x^-_{(-i, 0]}).
$
On the other hand it follows from
$$
H^-_m(\prod_{m \leq \ell \leq 0} G_\ell^-)H^+ = U,
$$
that
$$
(\prod_{m \leq \ell \leq 0} G_\ell^-)H^+  \in \mathcal S^+(X),
$$
and by (3.9) and (3.10) then
$$
x_{-i} \in \omega_1^-(x_{(-i, 0]}, a  ), \qquad i \in \Bbb N,
$$
and we have a contradiction to (DQ6-).
The case (3.6) reduces by  (DQ4$+$) 
to the  case (3.5).
The proof, that (DQ3+),(DQ4-), and (DQ6+) together imply (DQ6-),  is symmetric.
\end{proof}

%corollary 3.3
\begin{corollary}
A subshift with property (A), that satisfies  conditions (DP1$-$2) and conditions (DQ1$-$6) has as its associated semigroup an ${\mathcal R}$-graph semigroup.
\end{corollary}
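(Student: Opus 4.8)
The plan is to realize the associated semigroup ${\mathcal S}(X)$ as an ${\mathcal R}$-graph semigroup by invoking the characterization of ${\mathcal R}$-graph semigroups proved above. That characterization applies to a semigroup ${\mathcal S}$ whose projection onto $[{\mathcal S}]$ is an isomorphism, and asserts that such an ${\mathcal S}$ is an ${\mathcal R}$-graph semigroup precisely when it satisfies (AP~1$-$2) and (AQ~1$-$6). Since $X$ has Property $(A)$, the semigroup ${\mathcal S}(X)$ is invariantly associated to $X$, and by Theorem~2.3 of \cite{HK1} its projection onto $[{\mathcal S}(X)]$ is an isomorphism; thus the standing hypothesis of the characterization is in force, and it remains only to verify the eight algebraic assumptions for ${\mathcal S}(X)$.

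These I would obtain by translating the dynamical hypotheses (DP~1$-$2), (DQ~1$-$6) through the dictionary built into the construction of ${\mathcal S}(X)$, under which ${\mathcal U}_{{\mathcal S}(X)}$ corresponds to ${\frak P}(X)$, the subsemigroups ${\mathcal S}^-$ and ${\mathcal S}^+$ to the equivalence classes of points in $A^-(X)$ and $A^+(X)$, and the product $[u]_\approx[v]_\approx$ to the asymptotic gluing of points of $Y_X$. For a Property $(A)$ subshift the conditions (AP1), (AP2), (AQ1), (AQ2) and (AQ4) are equivalent to their translations (DP1), (DP2), (DQ1), (DQ2) and (DQ4), so these five are immediate from the hypotheses. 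Condition (AQ6) is delivered by the preceding proposition, in which (DQ4) and (DQ6) together are shown to force (AQ6). With (AP~1$-$2), (AQ1), (AQ2), (AQ4) and (AQ6) in hand, the two assumptions that remain are (AQ3) and (AQ5).

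For (AQ3$-$) I would read its translation (DQ3$-$) through the dictionary: the point $y\in A^+(X)$ whose existence (DQ3$-$) guarantees, glued asymptotically to $x\in A^-(X)$ so that the result lies in $A(X)$, supplies exactly a factor $F^+\in{\mathcal S}^+(V,U)$ with $F^-F^+=U$, and (AQ3$+$) follows symmetrically. For (AQ5$-$) the relevant point is that the map $y\mapsto[y]_\approx$ sends indecomposable points to indecomposable elements of the corresponding one of ${\mathcal S}^-,{\mathcal S}^+$; the uniform length bound $J_+-J_-\le M$ in (DQ5$-$), together with the finiteness of the alphabet, then bounds the number of $\approx$-classes of such points and hence yields the finiteness of the indecomposable elements required by (AQ5$-$), and symmetrically for (AQ5$+$). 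Once all of (AP~1$-$2) and (AQ~1$-$6) are verified, the characterization theorem gives that ${\mathcal S}(X)$ is an ${\mathcal R}$-graph semigroup. I expect the genuine obstacle to lie precisely in (AQ3) and (AQ5): unlike the five conditions covered by the stated equivalences, and unlike (AQ6) handled by the proposition, these two must be extracted directly from the dictionary, which forces one to make explicit the correspondence between indecomposable points of $A^{\mp}(X)$ and indecomposable elements of ${\mathcal S}^{\mp}$, and to check that the purely existential gluing in (DQ3) reproduces the algebraic splitting of (AQ3) with the correct idempotent endpoints $U,V$.
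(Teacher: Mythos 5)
Your proposal is correct and follows essentially the same route as the paper's own proof: invoke the characterization theorem of Section 2 (projection onto $[{\mathcal S}]$ an isomorphism, via Theorem 2.3 of \cite{HK1}, plus (AP\,1$-$2) and (AQ\,1$-$6)), use the stated equivalence of (AP1), (AP2), (AQ1), (AQ2), (AQ4) with their translations, obtain (AQ6) from the proposition that (DQ4) and (DQ6) imply it, and read (AQ3), (AQ5) off from their translations (DQ3), (DQ5). The only difference is one of exposition: the paper leaves the (DQ3)$\Rightarrow$(AQ3) and (DQ5)$\Rightarrow$(AQ5) steps implicit in the word ``translation,'' whereas you spell out the dictionary (gluing of asymptotic points, indecomposable points mapping to indecomposable elements), which is a faithful elaboration rather than a different argument.
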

\begin{proof}
By Theorem  2.3 of   \cite{HK2} for  a semigroup $\mathcal  S$ that is associated to a subshift with Property $(A)$ the projection of $\mathcal  S$ onto $[\mathcal  S]$ is an isomorphism.
Taking into account that  (DP1) and (DP2) are the translations of   (AP1) and (AP2) and that  (DQ3), (DQ4) and (D5) are the translations of  (AQ3), (AQ4) and (AQ5), apply Proposition 3.2 and Theorem 2.1.
\end{proof}

%section 4
\section{Property $(B)$}
We say that a subshift $X\subset \Sigma^{\Bbb Z}$ has property $(B)$ with respect to the parameter $M \in \Bbb N$ if the following holds: For $I_{-}, I _{+}, J_{-}  ,  J_{+}  \in \Bbb Z$ such that
$$
I _{+}  - I_{-}  ,  J_{+}  - J_{-}    \geq M,
$$
and
$$
a \in X_{(I_{-}, I _{+}]}, \quad b \in X_{(J_{-},J_{+}]}, 
$$
such that 
$$
a_{(I_{-}, I_{-} +M]} = b_{(J_{-}, J_{-} +M]} , \qquad
a_{(I_{+}-M, I_{+} ]} = b_{(J_{+}-M, J_{+}]} ,
$$
and 
$R \in \Bbb N$ and
$$
x^{-} \in \omega^-_\infty (a)\cap \omega^-_\infty (b),\quad x^{+}\in 
\omega^+_\infty (a)\cap\omega^+_\infty (b),
$$ 
such that
$$
\Gamma (x^{-}_{(I_{-}- R, I_{-} ]}, a, x^{+}_{(I_{+}, I_{+}+R ]}  ) =
\Gamma (x^{-}_{(J_{-}- R, J_{-} ]}, b, x^{+}_{(J_{+}, J_{+}+R ]}  ) ,
$$
one has that
$$
\Gamma (a) = \Gamma (b).
$$
We say that a subshift $X\subset \Sigma^{\Bbb Z}$ has property  $(B)$ if it has property  $(B)$ with respect to some parameter.

For a graph inverse semigroup $\mathcal S(\frak P, \mathcal E^-, \mathcal E^- )$ an $\mathcal S(\frak P, \mathcal E^-, \mathcal E^- )$-presentation has Property $(B)$, as can be seen by inspection. Conversely,  if an $\mathcal S_\mathcal R(\frak P, \mathcal E^-, \mathcal E^- )$ has properties $(A)$ and $(B)$, then the semigroup $\mathcal S_\mathcal R(\frak P, \mathcal E^-, \mathcal E^- )$ is necessarily a graph inverse semigroup.

%proposition  4.1
\begin{theorem}
Property $(B)$ is an invariant of topological conjugacy.
\end{theorem}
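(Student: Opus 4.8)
The plan is to pull the hypotheses back along a topological conjugacy, apply Property $(B)$ in the source system, and push the conclusion forward. Fix subshifts $X \subset \Sigma^{\Bbb Z}$, $\widetilde X \subset \widetilde\Sigma^{\Bbb Z}$ and a topological conjugacy $\varphi : X \to \widetilde X$, and choose $L \in \Bbb Z_+$ so that $[-L, L]$ is a common coding window for $\varphi$ and $\varphi^{-1}$, with $\Psi$ the block map giving $\varphi$ and $\Phi$ the block map giving $\varphi^{-1}$, as recalled in Section 3. Since $\varphi^{-1}$ is again a conjugacy, it suffices to show that if $X$ has Property $(B)$ with respect to $M$, then $\widetilde X$ has Property $(B)$ with respect to $\widetilde M := M + 2L$.

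The technical heart is a correspondence principle. Applying $\Phi$, with its window of radius $L$, carries a block $\widetilde c$ of $\widetilde X$ on an interval of length $\geq 2L$ to the block $\Phi(\widetilde c)$ of $X$ on the interval shrunk by $L$ at each end, and since $\varphi$ is a bijection on points this identifies the two-sided contexts: the conjugacy induces a bijection between $\Gamma(\widetilde c)$ and $\Gamma(\Phi(\widetilde c))$, and likewise the derived sets $\omega^\pm$ and the windowed (local) contexts correspond, each modulo a shift of the relevant length by $L$. I expect verifying this correspondence to be the main obstacle: one must track how the boundary strip of width $L$ mixes a block with its context, and check that membership in the union $\omega^\pm = \omega^\pm_\infty \cup \bigcup_n \omega^\pm_n$ is preserved. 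Everything else is bookkeeping of window offsets.

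Granting the correspondence, the argument runs as follows. Given data $I_-, I_+, J_-, J_+$, blocks $\widetilde a \in \widetilde X_{(I_-, I_+]}$, $\widetilde b \in \widetilde X_{(J_-, J_+]}$, a parameter $R$, and $\widetilde x^- \in \omega^-(\widetilde a) \cap \omega^-(\widetilde b)$, $\widetilde x^+ \in \omega^+(\widetilde a) \cap \omega^+(\widetilde b)$ satisfying the hypotheses of Property $(B)$ for $\widetilde X$ with respect to $\widetilde M$, I would glue $\widetilde x^-, \widetilde a, \widetilde x^+$ into a point $\widetilde u \in \widetilde X$ and $\widetilde x^-, \widetilde b, \widetilde x^+$ into a point $\widetilde v \in \widetilde X$, and set $u = \varphi^{-1}(\widetilde u)$, $v = \varphi^{-1}(\widetilde v)$. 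Put $a = u_{(I_- + L, I_+ - L]}$ and $b = v_{(J_- + L, J_+ - L]}$; these have length $\geq M$. Because the first and last $\widetilde M = M + 2L$ symbols of $\widetilde a$ and $\widetilde b$ agree, the window-$L$ computation of $\Phi$ forces the first and last $M$ symbols of $a$ and $b$ to agree, and it forces the pulled-back past $x^-$ (read off $u$, equivalently $v$, to the left of the $a$-, resp.\ $b$-, block) and future $x^+$ to be common words; the correspondence principle then places $x^- \in \omega^-(a) \cap \omega^-(b)$ and $x^+ \in \omega^+(a) \cap \omega^+(b)$.

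Finally, the assumed equality of windowed contexts for $\widetilde a$ and $\widetilde b$ transfers, via the correspondence, to the corresponding windowed-context equality for $a$ and $b$ with respect to some $R'$, so Property $(B)$ for $X$ yields $\Gamma(a) = \Gamma(b)$. Pushing forward through $\varphi$ then returns $\Gamma(\widetilde a) = \Gamma(\widetilde b)$, which is the conclusion of Property $(B)$ for $\widetilde X$. Here it is essential that $\widetilde a$ and $\widetilde b$ agree on their boundary strips of width $\widetilde M \geq 2L$: this makes the context-identifications attached to the two blocks coincide, so that $\Gamma(a) = \Gamma(b)$ is genuinely equivalent to $\Gamma(\widetilde a) = \Gamma(\widetilde b)$ rather than merely implied in one direction. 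This establishes the invariance.
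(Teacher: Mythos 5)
Your proposal is correct and takes essentially the same route as the paper's proof of Theorem 4.1: reduce to a conjugacy with coding window of radius $L$, pull the data back through $\varphi^{-1}$ with the enlarged parameter $\widetilde M = M + 2L$, check the hypotheses of Property $(B)$ for $X$ (matching length-$M$ boundary words, common pulled-back words in $\omega^-$ and $\omega^+$, windowed-context equality with window enlarged by $L$), apply Property $(B)$ in $X$, and push $\Gamma(a)=\Gamma(b)$ forward. Your closing observation—that the agreement of $\widetilde a$ and $\widetilde b$ on boundary strips of width at least $2L$ is what makes the forward transfer of context equality legitimate—is exactly the role played by the constrained context expressions at the end of the paper's argument.
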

\begin{proof}
If a subshift 
$X\subset \Sigma^{\Bbb Z}$ has Property $B$ with respect to some parameter, then its higher block systems also have Property $B$.
To prove the proposition it is therefore enough to consider the situation that one is given subshifts
$X\subset \Sigma^{\Bbb Z}, \widetilde{X}\subset\widetilde{ \Sigma}^{\Bbb Z}$ and a topological conjugacy
$\varphi : X \to \widetilde{X}$
that is given by a one-block map $\Phi: \Sigma \to \widetilde {\Sigma}$
with $\varphi^{-1}$ given for some $L \in \Bbb Z_{+}$ by a bock map $\widetilde{\Phi}: \widetilde{X}_{[-L, L]} \to \Sigma$, where $X$ has Property $(B)$ with respect to the parameter $M$, and to prove that the subshift $\widetilde{X}$ has also Property $(B)$.
We set
$$
\widetilde {M} = M + 2L,
$$
and we prove that $\widetilde{X}$ has Property $(B)$  with respect to the parameter $\widetilde {M}$.
For this let
${I}_{-},{I} _{+}, {J}_{-}  , { J}_{+}  \in \Bbb Z,$
$$
{I} _{+}  - {I}_{-}  ,  {J}_{+}  -{ J}_{-}    \geq \widetilde{M},
$$
and let
$$
\widetilde{a} \in \widetilde{X}_{({I}_{-}, {I}_{+}]}, \quad \widetilde{b} \in \widetilde{X}_{({ J}_{-},{ J}_{+}]}, 
$$
be such that 
\begin{align*}
\widetilde{a}_{({I}_{-},{I}_{-} +\widetilde{M}]} = \widetilde{b}_{{J}_{-}, {J}_{-} +\widetilde{M}]} , \quad
\widetilde{a}_{({I}_{+}-\widetilde{M},{ I}_{+} ]} = \widetilde{b}_{({J}_{+}-\widetilde{M},{J}_{+}]} , \tag {4.1}
\end{align*}
and let $R \in \Bbb N$ and 
\begin{align*}
\widetilde{x}^{-} \in \omega^-_\infty (\widetilde{a})\cap \omega^-_\infty (\widetilde{b}),\quad \widetilde{x}^{+}\in \omega^+_\infty (\widetilde{a})\cap\omega^+_\infty (\widetilde{b}), \tag {4.2}
\end{align*}
 be such that
\begin{align*}
\Gamma (\widetilde{x}^{-}_{({I}_{-}- R,{ I}_{-} ]}, \widetilde{a}, \widetilde{x}^{+}_{({I}_{+}, {I}_{+}+R ]}  ) = \Gamma (\widetilde{x}^{-}_{({J}_{-}- R,{J}_{-} ]}, \widetilde{b}, \widetilde{x}^{+}_{({J}_{+}-, {J}_{+}+R ]}  ) . 
\tag {4.3}
\end{align*}
We have to prove that
\begin{align*}
\Gamma (\widetilde{a})= \Gamma (\widetilde{b}). \tag {4.4}
\end{align*}
We let
$$
a \in X_{(I_{-}+L, I_{+}-L]}, \quad b \in X_{(J_{-}+L, J_{+}-L]},
$$
be given by 
$$
a = \widetilde{\Phi} ( \widetilde{a}  ), \qquad b= \widetilde{\Phi} ( \widetilde{b}  ). 
$$
By (4.2) 
\begin{align*}
a_{(I_{-} +L ,  I_{-}+L +M ]} =    b_{(J_{-}+L  ,  J_{-}+L+M ]} , \quad 
a_{(I_{+} -L- M ,  I_{+}-L ]} =   b_{(J_{-} -L-M ,  J_{+}-L ]}. \tag {4.5}
\end{align*}
We set also
$$
x^{-} =  \widetilde{\Phi} ( \widetilde{x} ^{-},  \widetilde{a}_{( {I}_{-}  , {I}_{-}  + 2L ]}), \quad 
x^{+} =  \widetilde{\Phi} (  \widetilde{a}_{( {I}_{+} -2L , {I}_{+}   ],} \widetilde{x} ^{+}) .
$$
It follows from (4.1) and (4.2) that
\begin{align*}
x^{-}  \in \omega^-_\infty (a) \cap \omega^-_\infty (b), \quad  x^{+} \in \omega^+_\infty (a) \cap \omega^+_\infty (b).
\tag{4.6}
\end{align*}
One has
\begin{multline*}
\Gamma (x^{-}_{(  I_{-}-R-L  , I_{-}+L   ]}, a, x^{+}_{(  I_{+}-L  , I_{+}+R+L   ]})= \\
\{  ( \widetilde{\Phi} (  \widetilde{u}^-   ) ,    \widetilde{\Phi}(  \widetilde{u}^+    ) :
(  \widetilde{u}^-    ,   \widetilde{u}^+ ) \in\Gamma (\widetilde{x}^{-}_{({I}_{-}- R,{ I}_{-} ]}, \widetilde{a}, \widetilde{x}^{+}_{({I}_{+}, {I}_{+}+R ]}  )  \}
\end{multline*}
with a similar expression for
$
\Gamma (x^{-}_{(  J_{-}-R-L  , J_{-}+L   ]}, b, x^{+}_{(  J_{+}-L  , J_{+}+R+L   ]}),
$
and it is seen that (4.3) implies that
\begin{align*}
\Gamma (x^{-}_{(  I_{-}-R-L  , I_{-}+L   ]}, &a, x^{+}_{(  I_{+}-L  , I_{+}+R+L   ]})=   \tag {4.7} \\ 
&\Gamma (x^{-}_{(  J_{-}-R-L  , J_{-}+L   ]}, b, x^{+}_{(  J_{+}-L  , J_{+}+R+L   ]}).
\end{align*}
By  (4.5), (4.6) and  (4.7) we can apply Property $(B)$ of $X$ to obtain
\begin{align*}
\Gamma (a) = \Gamma (b)  . \tag {4.8}
\end{align*}
One has
\begin{multline*}
\Gamma (\widetilde{a}) = 
\{ ( {\Phi} ( {u}^- _{(-\infty, I_-]}  ) ,   {\Phi}(  {u}^+ _ {(I_+, \infty)}  ) ): (  {u}^-  , {u}^+   ) \in \Gamma(a), \\
{\Phi} ( {u}^-_{( I_-  , I_-+L  ]}   ) = 
  \widetilde{a}_{( I_-  , I_-+L  ]} ,
  {\Phi} ( {u}^+_{( I_{+}  , I_{+}+L  ]}   ) = 
  \widetilde{a}_{( I_{+}  , I_{+}+L  ]}  \}
\end{multline*}
with a similar expression for $\Gamma (\widetilde{b})$, from which it is seen that (4.8) implies (4.4).
\end{proof}

%lemma 4.2
\begin{lemma}
Let $X\subset \Sigma^{\Bbb Z},
\widetilde{X}\subset\widetilde{ \Sigma}^{\Bbb Z}, $ be subshifts, and let 
 ${\varphi} :  {X}\to  \widetilde X $ 
 be a topological conjugacy that is given by a one-block map 
 ${\Phi}: \Sigma \to \widetilde \Sigma$
with ${\varphi}^{-1}$ given for some $L \in \Bbb Z_{+}$ by a bock map $\widetilde\Phi: 
\widetilde{X}_{[-L, L]} \to{ \Sigma}$. Let $\widetilde{X}$ have property $(B)$ with respect to the parameter $L$. Let $I_{-}, I _{+}, J_{-}  ,  J_{+}  \in \Bbb Z$ be such that
$$
I _{+}  - I_{-}  ,  J_{+} - J_{-}    \geq L,
$$
and let
$$
\widetilde{a} \in  \widetilde{X}_{( I_{-}  , I _{+}  ]}, \quad \widetilde{b} \in \widetilde{X}_{( J_{-}  , J_{+}  ]}, 
$$
be such that
$$
\widetilde{a}_{(I_{-}, I_{-} +L]} = \widetilde{b}_{(J_{-}, J_{-} +L]} , \quad
\widetilde{a}_{(I_{+}-L, I_{+} ]} =\widetilde{ b}_{(J_{+}-L, J_{+}]} , 
$$
and let
\begin{align*}
\widetilde{x}^{-} \in \omega^-_\infty (\widetilde{a})\cap \omega^-_\infty (\widetilde{b}),\quad \widetilde{x}^{+}\in \omega^+_\infty (\widetilde{a})\cap\omega^+_\infty (\widetilde{b}), 
\end{align*} 
Set
$$
{a} = \widetilde{\Phi} (\widetilde{ x}^{-} _{({I}_{-}-L,{I}_{-}]}, \widetilde{a},  \widetilde{x}^{+} _{({I}_{+},{I}_{+}+L]}  )  ,  
 \quad {b} =\widetilde { \Phi }( \widetilde{x}^{-}_{({J}_{-}-L,{J}_{-}]}, \widetilde {b}, \widetilde{ x}^{+} _{({J}_{+},{J}_{+}+L]}  ).
$$
Then
\begin{align*}
\Gamma({a}) = \Gamma({b}) , \tag {4.9}
\end{align*}
implies
\begin{align*}
\Gamma(\widetilde{a}) = \Gamma(\widetilde{b}). \tag {4.10}
\end{align*}
\end{lemma}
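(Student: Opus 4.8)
The plan is to translate the hypotheses on $\widetilde X$ back to $X$ via the block map $\widetilde\Phi$, exactly as in the proof of Theorem~4.1, and then run the computation in reverse. The final statement of the lemma is the converse implication to the one established inside the proof of Theorem~4.1: there we showed that $\Gamma(a)=\Gamma(b)$ follows from the compatibility of contexts on the $\widetilde X$ side (by applying Property~$(B)$ of $X$), and now we must show that $\Gamma(a)=\Gamma(b)$ forces $\Gamma(\widetilde a)=\Gamma(\widetilde b)$. The key structural fact is that a topological conjugacy induces a bijection between $\Gamma(\widetilde a)$ and a suitably described subset of $\Gamma(a)$, so equality of the $X$-contexts transports to equality of the $\widetilde X$-contexts.

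First I would record the explicit description of $\Gamma(\widetilde a)$ in terms of $\Gamma(a)$. Since $\varphi$ is given by the $1$-block map $\Phi$ and $\varphi^{-1}$ by $\widetilde\Phi$ with window $[-L,L]$, a pair $(\widetilde u^-,\widetilde u^+)\in X_{(-\infty,I_-]}\times X_{(I_+,\infty)}$ lies in $\Gamma(\widetilde a)$ precisely when its $\widetilde\Phi$-image, glued with $\widetilde a$, is admissible; and the $\widetilde\Phi$-image of such a left/right-infinite context is an element of $\Gamma(a)$ whose overlap region with $a$ is prescribed by the edge data of $\widetilde a$. This is the same formula
\begin{multline*}
\Gamma (\widetilde{a}) =
\{ ( {\Phi} ( {u}^- _{(-\infty, I_-]}  ) ,   {\Phi}(  {u}^+ _ {(I_+, \infty)}  ) ): (  {u}^-  , {u}^+   ) \in \Gamma(a), \\
{\Phi} ( {u}^-_{( I_-  , I_-+L  ]}   ) =
  \widetilde{a}_{( I_-  , I_-+L  ]} ,
  {\Phi} ( {u}^+_{( I_{+}  , I_{+}+L  ]}   ) =
  \widetilde{a}_{( I_{+}  , I_{+}+L  ]}  \}
\end{multline*}
that closes the proof of Theorem~4.1, now being used in the opposite direction. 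Because $a$ is built from $\widetilde a$ padded by the $L$-symbol tails of $\widetilde x^\pm$, and symmetrically for $b$, the overlap constraints for $\Gamma(\widetilde a)$ and for $\Gamma(\widetilde b)$ are matched up by the hypotheses $\widetilde a_{(I_-,I_-+L]}=\widetilde b_{(J_-,J_-+L]}$ and $\widetilde a_{(I_+-L,I_+]}=\widetilde b_{(J_+-L,J_+]}$ together with $\widetilde x^\pm\in\omega^\pm(\widetilde a)\cap\omega^\pm(\widetilde b)$.

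The bulk of the argument is then bookkeeping: write the analogous description of $\Gamma(\widetilde b)$ in terms of $\Gamma(b)$, observe that the two filtering conditions (the $\Phi$-image agreeing with $\widetilde a$, resp.\ $\widetilde b$, on the length-$L$ windows at $I_\pm$, resp.\ $J_\pm$) are identical once one uses the boundary-agreement hypotheses, and conclude that the map $(u^-,u^+)\mapsto(\Phi(u^-),\Phi(u^+))$ carries $\Gamma(a)$ onto $\Gamma(\widetilde a)$ and $\Gamma(b)$ onto $\Gamma(\widetilde b)$ compatibly. Hence $\Gamma(a)=\Gamma(b)$ yields $\Gamma(\widetilde a)=\Gamma(\widetilde b)$, which is (4.10). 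The main obstacle I expect is purely notational—keeping the index shifts by $L$ straight between the $X$-side blocks (which live on $(I_-+L,I_+-L]$ etc.) and the $\widetilde X$-side blocks, and verifying that the boundary windows used to filter the contexts are exactly the ones pinned down by the agreement hypotheses—rather than any genuine dynamical difficulty, since the hypothesis that $\widetilde X$ has Property~$(B)$ with parameter $L$ guarantees the contexts are determined by the length-$L$ boundary data and the relevant $\Gamma$ on the conjugate side.
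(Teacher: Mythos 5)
Your central set-theoretic claim is false in the setting of this lemma, and the error is not notational. The formula you quote from the end of the proof of Theorem~4.1 describes $\Gamma(\widetilde a)$ in terms of $\Gamma(a)$ only when $a=\widetilde\Phi(\widetilde a)$ is the \emph{shrunk} word, because then the coding window of $\widetilde\Phi$, applied at any position of $a$, never leaves $\widetilde a$. In Lemma~4.2 the word $a$ is instead the $\widetilde\Phi$-image of the \emph{padded} word $(\widetilde x^-_{(I_--L,I_-]},\widetilde a,\widetilde x^+_{(I_+,I_++L]})$, so the boundary symbols of $a$ depend on the chosen padding $\widetilde x^{\pm}$. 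Concretely, for $(\widetilde u^-,\widetilde u^+)\in\Gamma(\widetilde a)$ the point $\varphi^{-1}((\widetilde u^-,\widetilde a,\widetilde u^+))$ restricted to $(I_-,I_+]$ equals $\widetilde\Phi(\widetilde u^-_{(I_--L,I_-]},\widetilde a,\widetilde u^+_{(I_+,I_++L]})$, which need not equal $a$. Hence the map $(u^-,u^+)\mapsto(\Phi(u^-),\Phi(u^+))$ carries $\Gamma(a)$ \emph{into} $\Gamma(\widetilde a)$ but is not onto: its image consists only of those contexts of $\widetilde a$ whose length-$L$ boundary windows are compatible with the padding, i.e.\ (after the index shift) it is $\Gamma$ of the padded word, not $\Gamma(\widetilde a)$. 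A minimal example: let $\widetilde X$ be the full $2$-shift, $X$ its $2$-block presentation, $L=1$, and $\widetilde x^+=111\cdots$; then the last symbol of $a$ already records the padding symbol $1$, so no context of $\widetilde a$ whose right half begins with $0$ lies in the image. Consequently, from $\Gamma(a)=\Gamma(b)$ your bookkeeping yields only
$$
\Gamma\bigl((\widetilde x^-_{(I_--L,I_-]},\widetilde a,\widetilde x^+_{(I_+,I_++L]})\bigr)
=\Gamma\bigl((\widetilde x^-_{(J_--L,J_-]},\widetilde b,\widetilde x^+_{(J_+,J_++L]})\bigr),
$$
and not (4.10).

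The missing step is exactly where the paper uses the hypothesis you leave idle: having obtained equality of the padded contexts, one applies Property $(B)$ of $\widetilde X$ with parameter $L$, taking $R=L$, the common compatible contexts $\widetilde x^{\pm}$, and the agreement of the length-$L$ prefixes and suffixes of $\widetilde a$ and $\widetilde b$; Property $(B)$ is precisely the implication ``equal padded contexts $\Rightarrow$ equal full contexts.'' Your closing gloss, that Property $(B)$ ``guarantees the contexts are determined by the length-$L$ boundary data,'' misstates it — it is not a determinacy statement, and reading it that way makes the lemma appear to be pure bookkeeping, which is what led your argument to bypass the one step that cannot be bypassed.
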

\begin{proof}
 It is
\begin{align*}
\Gamma  (\widetilde{ x}^{-}& _{({I}_{-}-L,{I}_{-}]}, \widetilde{a},  \widetilde{x}^{+} _{({I}_{+},{I}_{+}+L]}  )  )  =\\
\{  (&\Phi( {u}^{-}_{(-\infty, {I}_{-}-L] } ), \Phi ( {u}^{+}_{({I}_{+}+L ,\infty)}     )):   ( {u}^{-} , {u}^{+}  ) \in \Gamma (a), \\
     &\Phi( {u}^{-}_{( {I}_{-}-L,{I}_{-}] }) =  \widetilde {x}^{-}_{( {I}_{-}-L,{I}_{-}] }      ,
                    \Phi( {u}^{+}_{( {I}_{+},{I}_{+}+L] } )=  
                         \widetilde {x}^{+}_{( {I}_{+},{I}_{+}+L] }  \},
\end{align*}
and replacing here in the right hand side  $a$ by $b$ yields the corresponding expression for $ \Gamma( \widetilde{x}^{-}_{({J}_{-}-L,{J}_{-}]}, \widetilde {b}, \widetilde{ x}^{+} _{({J}_{+},{J}_{+}+L]}  )$. From this it is seen that (4.9) implies
$$
\Gamma  (\widetilde{ x}^{-} _{({I}_{-}-L,{I}_{-}]}, \widetilde{a},  \widetilde{x}^{+} _{({I}_{+},{I}_{+}+L]}  )  )  =
\Gamma  (\widetilde{ x}^{-} _{({I}_{-}-L,{I}_{-}]}, \widetilde{b},  \widetilde{x}^{+} _{({I}_{+},{I}_{+}+L]}  )  )  ,
$$
which then by Property $(B)$ of $\widetilde{ X}$ implies (4.10).
\end{proof}

%section 5
\section{Property $(c)$}
We say that a subshift $X \subset \Sigma ^{\Bbb Z}$  has property $(c)$ with respect to the parameter $Q\in \Bbb N$
if the following holds:
For $I_{-} ,  I_{+}  , J_{-}   , J_{+}  \in \Bbb Z_{+},$
$$
I_{+}   -  I_{-}  ,  J_{+} -  J_{-}  \geq Q,
$$
and for
$$
a\in X_{( I_{-} , I_{+} ]},   \quad b\in X_{( J_{-} , J_{+} ]},
$$
such that
$$
\omega^-_\infty (a) \cap \omega^-_\infty  (b)  \neq  \emptyset, \quad \omega^+_\infty (a) \cap \omega^+_\infty (b)  \neq  \emptyset,
$$
and such that
$$
a_{( I_{-} , I_{-}+Q ]} = b_{( J_{-} , J_{-} +Q]},
\qquad
a_{( I_{+}-Q, I_{+} ]} = b_{( J_{+} -Q, J_{+} ]},
$$
and
$$
\Gamma^{-}(a)= \Gamma^{-}(b), \qquad\Gamma^{+}(a) = \Gamma^{+}(b ),
$$
one has that
 $$
\Gamma (a)= \Gamma (b) .
$$

We say that a subshift $X\subset \Sigma^{\Bbb Z}$ has Property $(c)$ if  it has Property $(c)$ with respect to some parameter.
Inspection shows that $S_{\mathcal R}({\frak P}, {\mathcal E}^-,  {\mathcal E}^+ )$-presentations have Property $(c)$.  

%proposition 5.1
\begin{proposition}
The simultaneous occurrence of properties $(B)$ and $(c)$ is
an invariant of topological conjugacy.
\end{proposition}
\begin{proof}
By Proposition 5.3 Property $(B)$ is an invariant of topological conjugacy. Also, if a subshift 
$X\subset \Sigma^{\Bbb Z}$ has Property $(c)$, then its higher block systems also have Property $(c)$. To prove the proposition it is therefore enough to consider the situation that one is given subshifts
$X\subset \Sigma^{\Bbb Z}, \widetilde{X}\subset\widetilde{ \Sigma}^{\Bbb Z}$ and a topological conjugacy
$\varphi : X \to \widetilde{X}$
that is given by a one-block map $\Phi: \Sigma \to\widetilde \Sigma$
with $\varphi^{-1}$ given for some $L \in \Bbb Z_{+}$ by a bock map $\widetilde{\Phi}: 
\widetilde{X}_{[-L, L]} \to \Sigma$, where $X$ has Property $(B)$ with respect to the parameter $L$ and has Property $(c)$ with respect to the parameter
$Q $, and to prove that the subshift $\widetilde{X}\subset\widetilde{ \Sigma}^{\Bbb Z}$ has Property $(c)$.
We set
$$
\widetilde{Q} = Q+L.
$$
We prove that $\widetilde{X}$ has Property $(c)$ with respect to the parameter $ \widetilde{Q} $.
For this, let 
$I_{-} ,  I_{+}  , J_{-}   , J_{+}  \in \Bbb Z_{+},$ be such that
$$
I_{+}   -  I_{-}  ,  J_{+} -  J_{-}  \geq  \widetilde{Q},
$$
and let
$$
 \widetilde{a}\in \widetilde{X}_{( I_{-} , I_{+} ]},   \quad  \widetilde{b}\in  \widetilde{X}_{( J_{-} , J_{+} ]},
$$
be such that
\begin{align*}
\omega^-_\infty (\widetilde{a}) \cap \omega^-_\infty   ( \widetilde{b})  \neq  \emptyset, \quad \omega^+_\infty  (\widetilde{a}) \cap \omega^+_\infty  ( \widetilde{b})  \neq  \emptyset, \tag {5.1}
\end{align*}
and such that
\begin{align*}
\widetilde{a}_{( I_{-} , I_{-}+\widetilde{Q} ]} = 
\widetilde{b}_{( J_{-} , J_{-}+\widetilde{Q}]},\qquad \widetilde{a}_{( I_{+} -\widetilde{Q}, I_{+} ]} = 
 \widetilde{b}_{( J_{+} -\widetilde{Q}, J_{+} ]}, \tag {5.2}
\end{align*}
\begin{align*}
\Gamma^{-}(\widetilde{a}) = \Gamma^{-}(\widetilde{b} ), \tag {5.3}
\end{align*}
and
$$
\Gamma^{+}(\widetilde{a})=\Gamma^{+}( \widetilde{b} ).
$$
We have to prove that
\begin{align*}
\Gamma ( \widetilde{a})= \Gamma (\widetilde{b}) .  \tag {5.4}
\end{align*}
By (5.1) we can choose
\begin{align*}
 \widetilde{x}^{-}   \in \omega^-_\infty (\widetilde a) \cap \omega^-_\infty  (\widetilde b)  ,  \tag {5.5}
 \end{align*}
 and
 \begin{align*}
 \widetilde{x}^{+}   \in \omega^+ _\infty(\widetilde a) \cap \omega^+_\infty (\widetilde b) . \tag {5.6}
\end{align*}
We set
$$
{a} = \widetilde{\Phi} (\widetilde{ x}^{-} _{({I}_{-}-L,{I}_{-}]}, \widetilde{a},  \widetilde{x}^{+} _{({I}_{+},{I}_{+}+L]}  )  ,  
 \quad {b} =\widetilde { \Phi }( \widetilde{x}^{-}_{({J}_{-}-L,{J}_{-}]}, \widetilde {b}, \widetilde{ x}^{+} _{({J}_{+},{J}_{+}+L]}  ).
$$
By Lemma 4.2 and by  Property $(B)$ of $ \widetilde{X}$ we will have proved (5.4), once we have shown  that 
\begin{align*}
\Gamma (a) = \Gamma (b). \tag {5.7}
\end{align*}
We set
$$
x^- = \widetilde\Phi( \widetilde{ x}^{-} , \widetilde a_{( {I}_{-},{I}_{-}+L]}    ) ,   
 \quad x^+ = \widetilde\Phi (  \widetilde a_{( {I}_{+}-L,{I}_{+}]}, \widetilde{ x}^{-}   ).
$$
By (5.2)  then also
\begin{align*}
x^- = \widetilde\Phi( \widetilde{ x}^{-} , \widetilde b_{( {J}_{-},{J}_{-}+L]}    ) ,   
\quad x^+ = \widetilde\Phi (  \widetilde b_{( {J}_{+}-L,{J}_{+}]}, \widetilde{ x}^{-}   ), 
\end{align*}
and by (5.5) and (5.6)  then
\begin{align*}
x^- \in \omega^-_\infty (a) \cap \omega^-_\infty  (b), \qquad x^+\in \omega^+_\infty (a) \cap \omega^+_\infty (b) . \tag {5.8}
\end{align*}
By  (5.2) 
\begin{align*}
a_{( I_{-} , I_{-} +Q]} = &\widetilde {\Phi} (\widetilde{x}^{-}_{( I_{-}-L, I_{-}] }   , \widetilde{a}_{( I_{-} , I_{-} +\widetilde{Q} ]} )=     \tag {5.9}\\
& \widetilde {\Phi}( \widetilde{x}^{-}_{( I_{-}-L, I_{-}] }  ,  \widetilde{b}_{( J_{+}  -\widetilde{Q}, J_{+} ]} )= b_{( J_{-} , J_{-} +Q ]},
 \end{align*}
 and symmetrically
 \begin{align*}
a_{( I_{-} , I_{-} +Q ]} = b_{( J_{-} , J_{-} +Q ]}. \tag {5.10}
 \end{align*} 
We prove that
\begin{align*}
\Gamma^-(a) =\Gamma^-(b). \tag {5.11}
\end{align*}
For this let $u^-\in \Gamma^{-}(a)$ and set $\widetilde u = \Phi ( u^-)$. One has $\widetilde u \in \Gamma^{-}(\widetilde{a})$, which by (5.3) implies that $\widetilde u \in  \Gamma^{-}(\widetilde{b}).$  
By (5.6) then $(\widetilde u, \widetilde b , \widetilde{ x}^{+})\in \widetilde X$, and therefore
$
( u,  b ,{ x}^{+})\in X
$
and (5.10) is confirmed.
Symmetrically one has that
\begin{align*}
\Gamma^+(a) =\Gamma^+(b). \tag {5.12}
\end{align*}
We use (5.8-12) and Property $(c)$ of $X$ to show (5.7).
\end{proof}

%section 6
\section{Instantaneous presentations}
We say that a subshift $X\subset \Sigma^{\Bbb Z}$ is right instantaneous 
if $ \omega^+_{1} ( \sigma )\neq \emptyset ,\sigma \in \Sigma $, equivalently, if  $ \omega^+_{\infty} ( \sigma )\neq \emptyset, \sigma \in \Sigma $. Left instantaneity is defined time 
symmetrically.  The notion of left instantaneity was  
considered by 
Matsumoto in \cite [ Section 4] {M1}. We say that a subshift is bi-instantaneous, if it is 
left and right instantaneous. We give an example of a topologically 
transitive sofic system that is left instantaneous but not 
right instantaneous. This example 
is a variation of an example of Carlsen and Matsumoto in \cite {CM}. 
Let $\Sigma = \{ 0,1, \alpha, \beta \}$ and exclude from 
$\Sigma^{{\Bbb Z}}$ the points that contain one 
of the following words: $10, 11,  \alpha 0^{n}1\beta,  \beta 0^{n}1\alpha ,  n \in  {\Bbb N}$. In 
this way one obtains a left instantaneous sofic system in which the words of the form 
$0^{n}1, \ n \in {\Bbb N}$, do not have a future that is compatible with their entire past.  By a product construction one can obtain examples of this 
type of topologically transitive sofic systems that are neither left 
nor right instantaneous.

%theorem 6.1
\begin{theorem}
A sofic system has a bi-instantaneous presentation.
\end{theorem}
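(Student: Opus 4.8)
The plan is to build, for a given sofic system $X$, an explicit bi-instantaneous subshift $\widetilde X$ in its topological conjugacy class, generalizing the recoding that is implicit in the example preceding the theorem. There right instantaneity fails only at the symbol $1$: call a symbol $\sigma$ \emph{offending} when $\omega^+_1(\sigma)=\emptyset$, so that in the example $1$ is the unique offending symbol, and the cure is the sliding block code that records, at each $1$, the single forced marker $\alpha$ or $\beta$ immediately to its right (turning $1$ into $1_\alpha$ or $1_\beta$) while leaving $0,\alpha,\beta$ untouched. In $\widetilde X$ the symbol $1_\alpha$ then has the universal successor $\alpha$, the symbol $0$ keeps the universal successor $0$, and the markers reset. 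First I would present $X$ by a finite essential labelled graph; soficity yields finitely many follower sets and, symmetrically, finitely many predecessor sets, and I would fix a right resolving cover with, say, $N$ states.

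Next I would isolate the obstruction in general. Since $\omega^+_1(\sigma)$ only inspects the next symbol, $\sigma$ offends exactly when the sets of admissible successors of $\sigma$, taken over its various admissible pasts, have empty intersection; a symbol lying in every such successor set (like $0$ above) never offends. I would then define a window-$M$ sliding block code $\phi$: at an offending coordinate $i$ record the forward block $x_{(i,i+k_i]}$ running up to the first \emph{non}-offending coordinate $i+k_i$, leave all non-offending symbols unrefined, and perform the symmetric backward refinement to repair $\omega^-_1$. Because each recorded block is recoverable from neighbouring symbols, $\phi$ is invertible with a bounded inverse, hence a topological conjugacy, and $\widetilde X:=\phi(X)$ is a presentation of $X$.

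To verify bi-instantaneity: a non-offending symbol keeps its universal successor; a refined offending symbol $\sigma'$ has recorded its whole offending run up to a non-offending symbol $\mu$, so the recoded successor $\widetilde x_{i+1}$ is \emph{determined} by $\sigma'$ and is therefore a universal successor, while the appearance of $\mu$ halts the propagation; the predecessor side is symmetric. I expect the main obstacle to be the need for a \emph{uniform} bound $M$ on the lengths of runs of offending symbols — equivalently, that a forced symbol always reaches a non-offending symbol within boundedly many steps. This is precisely what can fail for a non-sofic shift, and must be extracted from the finite-state structure by a pigeonhole/pumping argument on the $N$ cover states (a run too long to be bounded would repeat a state configuration and could be prolonged to a bi-infinite offending sequence, contradicting that offending reflects a genuine, resolvable branching). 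Two further delicate points are that the refinement must be \emph{selective} — a plain higher block recoding does not work, since $X^{\langle [0,1]\rangle}$ of the example still offends through its symbol $01$, whose two pasts force the disjoint successors $1\alpha$ and $1\beta$, the defect having merely moved — and that the forward and backward refinements must be carried out together so that neither reintroduces an offence repaired by the other.
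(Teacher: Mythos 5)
There is a genuine gap, and it sits exactly at the sentence ``a non-offending symbol keeps its universal successor \dots\ the appearance of $\mu$ halts the propagation.'' Your refinement destroys universality: after recoding, the universal successor $\nu$ of a non-offending symbol $\mu$ is split into several refined symbols $(\nu,w)$, and no single one of these need be compatible with every past of $\mu$; in other words, your code can turn non-offending symbols into offending ones, which is precisely the defect-moving phenomenon you yourself point out for higher-block codes. Concretely, enlarge the paper's example by a symbol $2$ that may only be followed by $1$, and extend the long-range constraint to forbid $\alpha w 1 \beta$ and $\beta w 1 \alpha$ for every nonempty word $w$ over $\{0,2\}$; this is still sofic. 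Then $2$ is non-offending in $X$ (its only successor $1$ can follow every past of $2$), so your code leaves $2$ unrefined, while $1$ is refined into $1_{\alpha}$ and $1_{\beta}$. But in the recoded shift a past ending in $\beta\,0\,2$ excludes the successor $1_{\alpha}$, and a past ending in $\alpha\,0\,2$ excludes $1_{\beta}$, so there $\omega^+_1(2)=\emptyset$: the image is not right instantaneous. Iterating your pass repairs this particular example in two rounds, but chains of forced symbols of length $n$ force $n$ rounds, and you give no termination argument; your pigeonhole remark bounds only the length of offending runs, which is not the issue.

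The root cause is that ``stop recording at the first non-offending coordinate'' is the wrong stopping rule: being non-offending is a one-step property of the current alphabet and is not stable under the recoding itself. The paper's proof uses a different, stable criterion. It takes $\mathcal{V}$ to be the finite (by soficity) collection of follower sets $\Gamma^+_{\infty}(x^-)$, lets each word $a$ act on $\mathcal{V}$ by the partial maps $\tau_a$, and records at each coordinate the shortest forward window $x_{[1,I^+(x)]}$ containing a tail $x_{(i^+(x),I^+(x)]}$ whose induced map restricts to the identity on the relevant image of follower sets (an idempotency condition), together with the time-symmetric backward window. The universal future of a recoded symbol is then the right-infinite periodic repetition of that idempotent tail (the paper's (6.3)) --- not the next symbol --- and the uniform bound on window lengths comes for free from the finiteness of $\mathcal{V}$, while the estimates (6.2) and (6.4) make the recoding a sliding block conjugacy. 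If you want to salvage your construction, replace your stopping rule by this idempotency criterion; with ``first non-offending symbol'' the proof fails.
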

\begin{proof}
For the construction of a bi-instantaneous presentation of a sofic system 
$X
\subset \Sigma^{ \Bbb Z}$ set 
$$
 {\mathcal V}= \{ \Gamma^{+}_\infty(x^{-}): x^{-}\in X_{( - \infty, 0 ]} \},
$$
and denote for an admissible word $a$ of $X$  by $  {\mathcal V} (a)$ the
set of $V \in  {\mathcal V}  $ that contain a sequence that starts with $a$.  
A word $a\in \mathcal L(X)$ determines a partial mapping $\tau_a$ of $  {\mathcal V} $ into itself, that has ${\mathcal V} (a)$  as its domain of definition, and that is given by
$$
\tau _{a} (V) = \{ y^{+} \in X_{[1, \infty)}: (a,  y^{+} ) \in   V(a)\},\qquad  V \in  {\mathcal V} (a) .
$$
For $x \in X$ there are $J, K \in \Bbb N$ such that 
$$
\tau _{x_{[1,J]}} = \tau _{x_{[1,J+K]}},
$$
in which case
$$
\tau _{x_{(J,J+K]}} \restriction \tau _{x_{[1,J]}}(  {\mathcal V}(x_{[1,J]}) ) = \text{id}.
$$
For a point $x\in X$ we can therefore define an  $I^+(x) \in {\Bbb N}$  as the smallest $I\in \Bbb N$, such that there is an $i \in (1, I)$ such that 
\begin{align*}
\tau_{x_{(i, I]}} 
 \restriction  
\tau_{x_{[1,i]}}( {\mathcal V}_{x_{[1, i]}}) =\text{id}, \tag {6.1}
\end{align*}
and an $i^+(x)\in (1, I^+(x))$ as  the uniquely determined $i \in (1, I^+(x))$ such that (6.1) holds. 
One has
\begin{align*}
I^+(S_X(x)) +1 \geq I^+(x),  \qquad x \in X. \tag {6.2}
\end{align*}
To see this,  note that
$$
\tau_{x_1}({\mathcal V}({x_{[1,  i^+(S_X(x))  ]}}))    \subset    {\mathcal V}({x_{(1, i^+(S_X(x))  ]}}),
$$
and
$$
\tau _{x_{(    i^+(S_X(x)), I^+(S_X(x))]}} 
\restriction  
\tau _{x_{(1,   i^+(S_X(x))]}}
({\mathcal V}(x_{(1, i^+(S_X(x)) ]}) = \text{id},
$$
imply that
\begin{align*}
\tau_{x_{(i^+(S_X(x)) +1,  I^+(S_X(x))  +1]}} 
 \restriction  
\tau_{x_{[1,i^+(S_X(x)) +1]}}( {\mathcal V}_{x_{[1,i^+(S_X(x)) +1]}}) =\text{id},  
\end{align*}
which means that the contrary of (6.1) would contradict the definition of $I^+$.
Denote by $z^+$ the point in $X_{( I^+(x) , \infty)}$ that carries the right infinite concatenation of the word
$x_{x_{(  i^+ ,   I^+ ]}}.$
It follows from 
$
{\mathcal V}_{x_{[1, i^+(x)]}} =  {\mathcal V}_{x_{[1, I^+(x)]}} ,
$
that
\begin{align*}
z^+ \in \omega^+( x_{[1, I^+(x)]}).   \tag {6.3}
\end{align*}
With $I^-$ and $z^-$ defined time symmetrically  one has
that
\begin{align*}
I^-(S_X(x)) -1 \leq I^-(x),  \qquad x \in X, \tag {6.4}
\end{align*}
and
\begin{align*}
z^- \in \omega^-( x_{ [I^-(x), 0)}).   \tag {6.5}
\end{align*}
We set
$$
\Xi (x)=    ( x_{ [I^-(x), 0)}         ,0,   x_{[1, I^+(x)]}   ),  \qquad \quad x \in X,
$$
and with $M$ denoting a bound for \{  $\vert I^-(x)\vert, I^+(x) :x \in X\}$ we  can by 
(6.2) and (6.4) define an embedding $\xi$  of $X$ into 
$(\cup_{1 \leq m \leq M}({\mathcal L}_m(X) \times \Sigma\times {\mathcal L}_m(X)))^{{\Bbb Z}}$ by 
$$
 \xi (x)  = ( \Xi (S^{-i}x)  )_{i\in \Bbb Z} ,         \qquad \quad x \in X.
$$
Set
$$
\Delta =  \Xi (X), \quad  Y =  \xi (X).
$$
We prove that $Y$ is a bi-instantaneous presentation of $X$. For this let
$$
( a(-), \sigma, a(+)  ) \in \Delta,
$$
and let $x \in X$ be such that
$$
\Xi (x)=( a(-), \sigma, a(+)  ).
$$
By (6.3) 
$$
  \Xi (S^{-1}_X((x_{(-\infty,  I^+(x) ]},z^+(x))) \in \omega^+_1( a(-), \sigma, a(+)  ),
$$
which confirms the right instantaneity of $Y$.
The proof that $Y$ is left instantaneous is time symmetric and uses  (6.5).
\end{proof}

We give an example of a semi-synchronizing 
(see \cite {Kr2} \cite {Kr5}) right-instantaneous non-sofic system that is not left instantaneous, but has a 
left instantaneous presentation. For this, take as alphabet the set
$$
\Sigma =  \{\bold 1 , \alpha_{\lambda}, \alpha_{\rho}, \beta_{\lambda}, 
\beta_{\rho}, \gamma_{\lambda}, \gamma_{\rho} \} ,
$$
and view $\Sigma$ as a generating set of ${\mathcal D}_{3}$ with relations
$$
\alpha_{\lambda} \alpha_{\rho} = \beta_{\lambda} \beta_{\rho} = 
\gamma_{\lambda} \gamma{\rho} = \bold 1, \  \alpha_{\lambda}\beta_{\rho} = 
\beta_{\lambda}\alpha_{\rho} = \alpha_{\lambda} \gamma_{\rho} = 
\gamma_{\lambda}
\alpha_{\rho} = \beta_{\lambda} \gamma_{\rho} = 
\gamma_{\lambda}\beta_{\rho} = 0.
$$
Intersect he Motzkin shift
$$
M_{3} = \{ x\in \Sigma^{\Bbb Z}: \prod_{I_{-}\leq i < I_{+}}^{} x_{i}  \neq  0 , 
I_{-}, I_{+}\in \Bbb Z , I_{-}< I_{+} \}
$$
with the subshift of finite type that is obtained 
by 
excluding from $\Sigma^{\Bbb Z}$ all points that contain one of the words 
$\gamma_{\lambda} \gamma_{\lambda}, 
\alpha_{\lambda}  \alpha_{\lambda} \gamma_{\lambda}, 
\beta_{\lambda}\beta_{\lambda}\gamma_{\lambda}$ to obtain a subshift 
$X$. There is a topological 
conjugacy of $X$ onto a subshift  $\bar{X}$ that is given by a 
3-block mapping $\Phi$,
$$	
\Phi( \alpha_{\lambda} \beta_{\lambda} \gamma_{\lambda}) = \Phi
(\beta_{\lambda} \alpha_{\lambda} \gamma_{\lambda}) =  
\gamma_{\lambda}
$$
$$
\Phi ( \sigma \sigma' \sigma'') = \sigma', \quad\sigma\sigma'' 
\notin   \{ \alpha_{\lambda} \gamma_{\lambda} , 
\beta_{\lambda}\gamma_{\lambda}\} .
$$
Whereas the subshift $X$ is bi-instantaneous, the subshifts $\bar{X}^{\langle 
[0,n)\rangle}, \ n \in  \Bbb N$, are right-instantaneous but not 
left-instantaneous: The 
words  $\gamma_{\lambda}\gamma_{\lambda} \bold 1^{n} ,\ n \in  \Bbb N$, 
do not have a past that is compatible with their entire future context.

For a subshift $X 
\subset  \Sigma^{\Bbb Z}$, for $L\in\Bbb Z_{+}$, and for mappings
$$
\Psi^{(+)}: X_{[-L,L] } \to X_{[1,L+1] } 
$$
we formulate a condition $(RI)$, that comes in two parts:
\begin{align*}
(RIa): \Psi^{(+)} (a) \in \Gamma ^{+} (x^{-}, a_{[-L, 0]}),  \   \qquad a &\in X_{[-L,L]},x^{-}
\in \Gamma ^{-}(a).
\\
(RIb):\Psi ^{(+)}(b_{(-2L + \ell ,-L )} ,a_{[-L, \ell]}) = \Psi ^{(+)}
&(b_{(-2L + \ell ,-L )} ,
a_{[-L, 0]},\Psi ^{(+)}(a)_{[1. \ell]}), 
\\
&b \in X_{(-2L, L)}, a \in X_{[-L,L]}, 1 \leq \ell \leq L.
\end{align*}
We say that a mapping $ \Psi^{(+)} : X_{[-L,L]} \to X_{[1,L+1]}, L\in \Bbb Z_+,$ that satisfies
Condition  $(RI)$  is an $ RI$-mapping of $X$. 

%lemma 6.2
\begin{lemma}
Let 
 $\widetilde{X} \subset \widetilde{\Sigma} ^{\Bbb Z}$ be a right instantaneous subshift, and let
 $\varphi $ be a topological conjugacy of $\widetilde{X}$ onto a subshift $X\subset \Sigma^{\Bbb Z}$, 
 that is given by a one-block map 
$\widetilde\Phi : \widetilde{\Sigma}\to \Sigma$
with  $\varphi^{-1} $
given for some $L \in \Bbb Z_+$ by a block map
$$
{\Phi}: X_{[-L, L] } \to\widetilde \Sigma.
$$
Then $X$ has an RI-mapping  $\Psi^{(+)} : X_{[-L,L]} \to X_{[1,L+1]}$.
\end{lemma}
\begin{proof}
Choose a mapping $\widetilde {\Psi}^{(+)}$ that selects for every  
$\widetilde  \sigma \in \widetilde  \Sigma$ an element  of $\widetilde  {X}_{ [1,  L 
+1 ]} \cap  \omega ^{+}(\widetilde \sigma)$, and set
$$
\Psi^{(+)} = \widetilde \Phi \widetilde  \Psi^{(+)}  \Phi.
$$
We show  that $\Psi^{(+)}$ is 
an $RI$-mapping of X.
To see that $\Psi^{(+)}$ satisfies  $(RIa)$, let 
$a\in X_{[-L,L]}$ and let  $x^{-} \in \Gamma^{-}(a)$. Then one has for
$$
\widetilde  \sigma =  \Phi (a), 
\quad\widetilde  x^{ -} = \Phi (x^{-},a), 
$$
that 
$$
\widetilde  x^{-}\in \Gamma^{-}(\widetilde  \sigma),$$
and therefore
$$
(\widetilde  x^{ -}, \widetilde  \sigma,\widetilde \Psi^{(+)}( \widetilde  \sigma )) \in \widetilde X,
$$
and from
\begin{align*}
\widetilde \Phi (\widetilde  {x}^{-},\widetilde  \sigma, \widetilde \Psi^{(+)}( \widetilde  \sigma )) = 
({x}^-, a_{[-L, 0]}, \Psi^{(+)}(a)), 
\end{align*}
one has then, that
$$
\Psi ^{(+)}(a) \in \Gamma^{+}(x^{-}, a_{[-L, 0]}), 
$$
and condition $(RIa)$ is shown.

To show $(RIb)$, let 
$$
a\in  X_{[-L, L]}, \qquad b\in X_{(-3L. L)} \cap\Gamma^-(a).
$$
By $(RIa)$
$$
\Phi (b, a) \in \Gamma ^-(  \Psi^{(+)}( a) ),
$$
and one has that
$$
 \widetilde  \Phi(\Phi(b, a),  \widetilde  \Psi^{(+)}(\Phi( a))) =
  ( b_{(-2L, -L)  }, a_{[-L, 0]},   \Psi^{(+)}( a)     ),
$$
and therefore
$$
\Phi(  b_{(-2L+ \ell,- L) }        , a_{[-L, \ell]}   ) = 
\Phi(  b_{(-2L+ \ell,- L) }        , a_{[-L, 0]}, \Psi^{(+)}( a)_{[1, \ell]}), \quad 1 \leq \ell \leq L,
$$
which implies $(RIb)$. 
\end{proof}

Let $X \subset  \Sigma ^{\Bbb Z}$ be a subshift with an $RI$-mapping $
\Psi^{(+)} : X_{[-L,L]} \to X_{[1,L+1]}.$
Define a block map
$$
\Theta^{(+)}_{\Psi^{(+)}} :  X_{[-L,L]}  \to X_{[-L,0]} \times X_{[1,L+1]} 
$$ 
by 
$$
\Theta^{(+)}_{\Psi^{(+)}}(d) = (d_{[-L,0]}  , \Psi^{(+)} (d)),\quad d \in X_{[-L,L]}.
$$
The block map $\Theta^{(+)}_{\Psi^{(+)}}$ determines an embedding
 $$
  \vartheta^{(+)}_{\Psi^{(+)}}:X\to (X_{[-L,0]} \times X_{[1,L+1]}) ^{\Bbb Z}
  $$
 by
 $$
\vartheta^{(+)}_{\Psi^{(+)}}(x) = (x_{[i-L,i]},  \Psi^{(+)}( x_{[i-L,i+L]}  ))_{i\in \Bbb Z}, \qquad
 x \in X.
 $$
 
 %Lemma 6.3
 \begin{lemma}
 Let $X \subset  \Sigma ^{\Bbb Z}$ be a subshift, let $L \in \Bbb Z_+$, and let 
  $$
 \Psi^{(+)}:X_{[-L, L]}\to X_{[-L. 0]}\times X_{[1, L+1]},
 $$
 be an  $RI$-mapping of $X$.
 Then $ \vartheta^{(+)}_{\Psi^{(+)}}(X)$ is right instantaneous.
 \end{lemma}
  \begin{proof}
  Let $(b, c) \in \Theta^{(+)}_{\Psi^{(+)}}(X_{[-L, L]} )$. One has by $(RIb)$ that
  \begin{align*}
  \Gamma^-(b,c) &= \{(\Theta^{(+)}_{\Psi^{(+)}}(x^-_{[i-L,i+ L]})_{-\infty < i < 0}: x^- \in X_{(-\infty, L]} , (b, c)= \Theta^{(+)}_{\Psi^{(+)}}( x_{[-L, L]}  )     \}         \\
 & = \{(\Theta^{(+)}_{\Psi^{(+)}}(x^-_{[i-L,i+ L]})_{-\infty < i < 0}: x^- \in X_{(-\infty, L]} , 
 x_{[-L, L]}=(b, c_{[1, L]})  \},
   \end{align*}
  and by $(RIb)$ one has for $x^- \in X_{(-\infty, L]} $, 
  such that $  x_{[-L, L]}=(b, c_{[1, L]})$, that
  $$
(  x^-_{(-\infty, 1]}, \Psi^{(+)}( b, c_{[1, L]}  )) \in X_{(-\infty, L+1]}.
  $$
 It follows for
 $$
 b^\prime= (b_{(-L, 0]}, c_1) , \qquad    c^\prime= \Psi^{(+)}(b^\prime, c_{[1, L+1]}),
 $$ 
 that
 $$
 ( b^\prime, c^\prime) = \Theta^{(+)}_{\Psi^{(+)}}(b^\prime , c  ) \in \omega^+_1(b, c_{[1, L+1]}). \qed
 $$
\renewcommand{\qedsymbol}{}
\end{proof}

We say that a subshift $X \subset  \Sigma^{\Bbb Z}$, for $L\in\Bbb Z_{+} $ has Property $RI$ if it has an $RI$-mapping. By the following theorem Propety $RI$ is an invariant of topological conjugacy.

%theorem 6.4
\begin{theorem}
A subshift has a right instantaneous presentation if and only if it has Property $RI$.
\end{theorem}
\begin{proof}
Apply Lemma (6.2) and Lemma (6.3)
\end{proof}

For a subshift $X 
\subset  \Sigma^{\Bbb Z}$, for $L\in\Bbb Z_{+}$, and for mappings
$$
\Psi^{(+)}: X_{[-L,L] } \to X_{[1,L+1] } 
$$
we have a condition $(LI)$, that is symmetric to the Condition $(RI)$:
\begin{align*}
(LIa): \Psi^{(-)} (a) \in \Gamma ^{-} (a_{[0, L]},x^{+}),  \ \   \quad a &\in X_{[-L,L]},x^{+}
\in \Gamma ^{+}(a).
\\
(LIb):\Psi ^{(-)}(a_{[-\ell, L]},b_{(L, 2L -\ell )} ) = 
\Psi ^{(-)}&(\Psi ^{(-)}(a)_{[- \ell, -1]},a_{[0, L]},b_{(L , 2L - \ell  )} ), 
\\
&b \in X_{(L, 2L)}, a \in X_{[-L,L]}, 1 \leq \ell \leq L.
\end{align*}
We say that a mapping $ \Psi^{(-)} : X_{[-L,L]} \to X_{[1,L+1]}, L\in \Bbb Z_+,$ that satisfies
Condition  $(LI)$  is an $ LI$-mapping of $X$. The symmetric analogues of  Lemma (6.2), Lemma (6.3) and Theorem (6.4) have the symmetric proofs.

We say that a pair $(\Psi_{-},\Psi_{+})$ that consists of an $ LI$-mapping
$\Psi_{-}$ and a $LR$- mapping $\Psi_{+}$  is a pair of $BI$-mappings if also 
the following condition $(BI)$, which comes in two parts, is satisfied:
\begin{align*}
(LBI): \Psi ^{(-)}(b_{(-2L + \ell ,-L )} ,a_{[-L, \ell]})& = \Psi ^{(-)}(b_{(-2L + \ell ,-L )} ,
a_{[-L, 0]},\Psi ^{(+)}(a)_{[1. \ell]}), 
\\
&b \in X_{(-2L, L)}, a \in X_{[-L,L]}, 
\\
(RBI) :\Psi ^{(+)}(a_{[-\ell, L]},b_{(L, 2L -\ell )} ) = 
& \ \Psi ^{(+)}(\Psi ^{(-)}(a)_{[- \ell, -1]},a_{[0, L]},b_{(L , 2L - \ell  )} ), 
\\
&b \in X_{(L, 2L)}, a \in X_{[-L,L]}, 1 \leq \ell \leq L.
\end{align*}

%lemma 6.5
\begin{lemma}
Let 
 $\widetilde{X} \subset \widetilde{\Sigma} ^{\Bbb Z}$ be a bi-instantaneous subshift, and let
 $\varphi $ be a topological conjugacy of $\widetilde{X}$ onto a subshift $X\subset \Sigma^{\Bbb Z}$, 
 that is given by a one-block map 
$\widetilde\Phi : \widetilde{\Sigma}\to \Sigma$
with  $\varphi^{-1} $
given for some $L \in \Bbb Z_+$ by a block map
$$
{\Phi}: X_{[-L, L] } \to\widetilde \Sigma.
$$
Then $X$ has a a pair $(\Psi^{(-)} ,\Psi^{(+)})$ of  BI-mappings,
\begin{align*}
&\Psi^{(-)} : X_{[-L,L]} \to X_{[-L - 1,-1]},
\ 
&\Psi^{(+)} : X_{[-L,L]} \to X_{[1,L+1]}.
\end{align*}
\end{lemma}
\begin{proof}
Choose simultaneously a  mapping $\widetilde{\Psi}^{(-)}$ that selects for every $\widetilde {\sigma} \in
\widetilde {\Sigma}$ an element of $\widetilde{X}_{[-L-1,0)} \cap \omega^{-}(\widetilde{\sigma})$ and a mapping $\widetilde{\Psi}^{(+)}$ that selects 
 for every  
$\widetilde \sigma \in \widetilde \Sigma$ an element of $\widetilde {X}_{ [1,  L +1 ]} \cap  \omega ^{+}(\bar \sigma)$,  and set
$$
\Psi^{(-)} = \widetilde\Phi {\widetilde\Psi^{(-)}} {\Phi},
\qquad
\Psi^{(+)} = \widetilde \Phi \bar {\Psi}^{(+)} \Phi.
$$
One confirms that $(\Psi^{(-)} , \Psi^{(+)})$ is a a pair of   $BI$-mappings for $X$.
\end{proof}

Let $X \subset  \Sigma ^{\Bbb Z}$ be a subshift with a pair $(\Psi^{(-)} , \Psi^{(+)})$,
\begin{align*}
&\Psi^{(-)} : X_{[-L,L]} \to X_{[-L - 1,-1]},
\ 
&\Psi^{(+)} : X_{[-L,L]} \to X_{[1,L+1]},
\end{align*}
 of   $BI$-mappings for $X$.
Define a block map
$$
\Theta_{(\Psi^{(+)},\Psi^{(+)})} :  X_{[-L,L]}  \to X_{[-L-1,-1]} \times  \Sigma \times X_{[1,L+1]} 
$$  
by 
$$
\Theta_{(\Psi^{(+)},\Psi^{(+)})} (a)= (\Psi^{(-)}(a) , a_0 , \Psi^{(+)}( a)),\qquad a \in X_{[-L,L]}.
$$
The block map $\Theta_{(\Psi^{(+)},\Psi^{(+)})} $ determines an embedding
 $$
  \vartheta_{(\Psi^{(+)},\Psi^{(+)})}:X\to (X_{[-L,0]}\times \Sigma \times X_{[1,L+1]}) ^{\Bbb Z}
  $$
 by
 $$
\vartheta_{(\Psi^{(+)},\Psi^{(+)})}(x) =
 (  \Psi^{(-)}( x_{[i-L,i+L]}), x_i,  \Psi^{(+)}( x_{[i-L,i+L]}))_{i\in \Bbb Z}, \qquad
 x \in X.
 $$

The proof of the following lemma  
is a twofold repeat  of
the preceeding one, that uses the components of the pair of $BI$-mappings simultaneously.

%lemma 6.6
\begin{lemma}
Let $X \subset  \Sigma ^{\Bbb Z}$, let $L \in \Bbb Z_+$, and let
$(\Psi^{(-)} , \Psi^{(+)})$,
\begin{align*}
&\Psi^{(-)} : X_{[-L,L]} \to X_{[-L - 1,-1]},
\ 
&\Psi^{(+)} : X_{[-L,L]} \to X_{[1,L+1]},
\end{align*}
be a pair of BI-mappings of X.
 Then $ \vartheta_{(\Psi^{(+)},\Psi^{(+)})}(X)^{\langle [1,L] \rangle}$ is bi-instantaneous.
 \end{lemma}
\begin{proof}
Let $a \in  X_{[-L,L]}$ and set
$$
(b^-(i), d_i, b^+(i)) _{-L \leq i \leq 0}= (\Theta ( a_{[i-L. i+L]}  ) )_{-L \leq i \leq 0}.
$$
Set
$$
\bar a = (a_{[-2L. 0]} , \Psi^{(+)}(  a_{[i-L. i+L]} ).
$$
By $(LRb)$ and $(LBI)$
$$
(b^-(i), d_i, b^+(i)) = (\Theta ( \bar a_{[i-L. i+L]}  ) )_{-L \leq i \leq 0}. 
$$
It follows by $(LIa)$ that
$$
(\Theta ( \bar a_{[i-L. i+L]}  ) )_{-L \leq i \leq 0} \in \omega^+_1( (b^-(i), d_i, b^+(i)) _{-L \leq i \leq 0} ).
$$
The proof of left instantaneity is symmetric.
\end{proof}

We say that a subshift $X \subset \Sigma^{\Bbb Z}$ has property $BI$ if it has a 
a pair of $BI$-mappings. The following theorem implies that Property $BI$ is an invariant of topological conjugacy. 

\begin{theorem}
A subshift has a bi-instantaneous presentation if and only if it has Property $BI$.
\end{theorem}
\begin{proof}
Apply Lemma (6.5) and Lemma (6.6)
\end{proof}

The coded system (see[BH]) with code
$$
{\mathcal C} = \{ 0\alpha^{n}\beta^{n} :   n \in { \Bbb N}\} ,
$$
is an example of a synchronizing system that has neither
a left instantaneous presentation nor a right instantaneous presentation. We 
give an example of a semisynchronizing (see [Kr3]) non-synchronizing subshift 
that has
 neither
a left instantaneous presentation nor a right instantaneous presentation. For this, take as alphabet the set
$$
\Sigma = \{\bold  1, \alpha_{\lambda}, \alpha_{\rho}, \beta_{\lambda}, 
\beta_{\rho}\} ,
$$
and view $\Sigma$ as a generating set of ${\mathcal D}_{2}$ with relations
$$
\alpha_{\lambda}\alpha_{\rho} = \beta_{\lambda} \beta_{\rho}  = 
\bold 1, \ \alpha_{\lambda} \beta_{\rho} = \beta_{\lambda} \alpha_{\rho}  
= 0.
$$
We let $X$ be the subshift in $\Sigma^{\Bbb Z}$ that  contains all $x 
\in  M_{2}$ that are also
label sequences of bi-infinite paths on the directed 
graph that has vertices $v, v(+), v(-)$, four loops at $v$, one with 
labels 
$\alpha_{\lambda},\alpha_{\rho},\beta_{\lambda},\beta_{\rho}$, a 
loop at $v(-)$ with label $\beta_{\lambda}$, a 
loop at $v(+)$ with label $\beta_{\rho}$, an edge from $v$ to $v(-)$ 
with label $\alpha_{\lambda}$, an 
edge from $v(-)$ to $ v(+)$ with label $\bold 1$,  and an edge from $v(+)$ to 
$v$ with 
label $\alpha_{\rho}$. 
Adding a loop at vertex $v(+)$ that carries the label $\bold 1$  one obtains
a semi-synchronizing non-synchronizing right instantaneous subshift that does
not have a left instantaneous presentation.

%section 7
\section{Presenting a class of subshifts}

We say that a subshift $X \subset \Sigma^{\Bbb Z}$ is strongly bi-instantaneous, 
if there is an
$R\in \Bbb N$  such that
for  any word $a\in \mathcal L(X)$ of length at least 
$R$, there is a word $c\in \Gamma^+(a) \cap  \Gamma^-(a)$ such that 
  $ ca \in \omega^+(a)  $ and $ac \in \omega^-(a)$.

%proposition 7.1
\begin{proposition}
Let $X \subset \Gamma^{\Bbb Z}$ be a strongly bi-instantaneous topologically transitive subshift. 
Let $X$ have Property (A) with associated semigroup the $\mathcal R$-graph semigroup
$ {\mathcal S} _{\mathcal R} ({\frak P},{\mathcal E}^-,{\mathcal E}^+)$, and let
$X$ have Property $(c)$. Then $X$ has an ${\mathcal S}_{\mathcal R} ({\frak P},{\mathcal E}^-, {\mathcal E}^+)$-presentation.
\end{proposition}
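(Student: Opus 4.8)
The plan is to construct the presenting labelled graph $(\mathcal V,\Sigma,\lambda)$ explicitly and to exhibit a sliding block code realizing the conjugacy. First I would normalize the local structure. Since Properties $(A)$ and $(C)$, strong bi-instantaneity, and the inclusion $D(X)\subset A(X)$ are all invariant under topological conjugacy (by the invariance results of Sections 5 and 7, in particular Proposition 7.2) and are inherited by higher block systems, I would replace $X$ by a suitable higher block presentation. Strong bi-instantaneity furnishes, for every sufficiently long word $a$, an anchor word $c\in\Gamma^+(a)\cap\Gamma^-(a)$ with $ca\in\omega^+(a)$ and $ac\in\omega^-(a)$; combining this with the bi-instantaneous presentation theorem of Section 6, I may assume after blocking that $X$ is $1$-bi-instantaneous and that the parameters governing Property $(A)$, Property $(C)$, and the resolution of $A(X)$ have all been absorbed into the blocking, so that the semigroup contribution of a single symbol is legible from that symbol and its immediate neighbours.

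Next I would fix the vertices and the ground states. By the normal-form description of $\mathcal S_{\mathcal R}(\frak P,\mathcal E^-,\mathcal E^+)$ recalled in Section 2, the idempotent set $\mathcal U_{\mathcal S(X)}$ is exactly $\{\bold 1_{\frak p}:\frak p\in\frak P\}$, and $\mathcal E^-$, $\mathcal E^+$ are the indecomposables of $\mathcal S^-(X)$, $\mathcal S^+(X)$. The inclusion $D(X)\subset A(X)$ guarantees that the periodic points whose context stabilizes lie in the good set, so that each idempotent $\bold 1_{\frak p}$ is realized as the context class of a genuine orbit segment; these segments mark the admissible ground states. I would then take as vertex set $\mathcal V$ the finitely many $\approx$-classes of the short windows used for the reading, tagged by a ground-state label $\frak p$, and set $\mathcal V(\frak p)$ to consist of those tagged by $\frak p$; topological transitivity together with the realization of each $\bold 1_{\frak p}$ yields (G2), and the tagging yields the partition (G3). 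The edge set $\Sigma$ consists of the admissible one-step transitions, and I would define $\lambda$ by declaring $\lambda(\sigma)$ to be the unique generator or idempotent in $\mathcal E^-\cup\{\bold 1_{\frak p}:\frak p\in\frak P\}\cup\mathcal E^+$ by which the running context class is right-multiplied when the new symbol is appended; the normal-form structure of $\mathcal S_{\mathcal R}(\frak P,\mathcal E^-,\mathcal E^+)$ makes this generator unique, which gives (G1), and the anchor words $c$ show that at each ground state the relevant one-sided products are nonzero, which gives (G4).

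The coherence lemma at the heart of the argument is that for every admissible word $b$ of $X$ the product $\lambda(b)$ of the edge labels equals the context class $[b]_\approx\in\mathcal S(X)$, and in particular $\lambda(b)\neq 0$. Granting this, the sliding block code $\varphi$ sending $x$ to its vertex/edge sequence is injective and shift-commuting, and the nonvanishing of every finite factor shows $\varphi(X)\subset X(\mathcal V,\Sigma,\lambda)$. The content of (G5), namely that every $f$ with $\bold 1_{\frak q}f\bold 1_{\frak r}\neq 0$ is carried by some path from a prescribed $U\in\mathcal V(\frak q)$ to a prescribed $W\in\mathcal V(\frak r)$, I would obtain by realizing $f$ as a concrete admissible word through its normal form and (AQ2), anchoring its ends to $\frak q$ and $\frak r$ by strong bi-instantaneity, and steering the path to the specified vertices using the ground-state cycles at $U$ and $W$ together with transitivity. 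The reverse inclusion, that every path with nonzero label product arises from a point of $X$, is where Property $(C)$ does the decisive work: a nonzero product enforces the matching of the negative and positive generators along the path, and Property $(C)$ converts the resulting agreement of the one-sided contexts $\Gamma^-$, $\Gamma^+$ and of $\omega^-$, $\omega^+$ over bounded resolution into agreement of the full context $\Gamma$, which permits the path to be glued into an admissible point.

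I expect the main obstacle to be precisely this faithfulness of the labelling together with (G5). The forward direction $\lambda(b)=[b]_\approx$ is delicate because it requires that the locally read generators multiply, across unboundedly many nested matchings, to the globally defined context class; and the reverse gluing requires Property $(C)$ to upgrade bounded-range context agreement to full context agreement without spurious admissible words being created. The normalization in the first step and the faithful single-symbol reading rely on strong bi-instantaneity and on $D(X)\subset A(X)$, while the two-sided context control needed to close the argument is exactly what Property $(C)$ supplies; assembling these into a single coherent block code, and checking that no point of $X$ is lost and none is spuriously added, is the technical core.
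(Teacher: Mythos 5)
Your overall strategy---build a labelled graph out of finite windows of $X$, label it through the associated semigroup, and check (G1)--(G5)---is the same as the paper's, but the two steps you lean on are precisely where the proposal breaks down. First, your labelling rule is not available: you declare $\lambda(\sigma)$ to be ``the unique generator or idempotent in ${\mathcal E}^-\cup\{{\bold 1}_{\frak p}:{\frak p}\in{\frak P}\}\cup{\mathcal E}^+$'' by which the running context class is multiplied when one symbol is appended. Condition (G1) only requires labels in ${\mathcal S}^-_{\mathcal R}\cup\{{\bold 1}_{\frak p}:{\frak p}\in{\frak P}\}\cup{\mathcal S}^+_{\mathcal R}$, and the construction that actually works uses that freedom: the paper takes ${\mathcal V}={\mathcal L}_{k_\circ+1}(X)$ and $\Sigma={\mathcal L}_{k_\circ+2}(X)$, so that $X^{\langle k_\circ+2\rangle}$ is literally the path shift of the graph, and labels an edge $a$ by the product $f^{(-)}(a^{(-)})\,f^{(+)}(a^{(+)})$, which is in general a nontrivial product of generators, not a single generator or idempotent. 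No hypothesis of the proposition lets you recode $X$ so that each symbol contributes exactly one generator; your appeal to ``normal-form structure'' is an assertion, not an argument, and your own coherence requirement $\lambda(b)=[b]_\approx$ would then force the class of every word to factor into one generator per symbol, which there is no reason to expect in any presentation in the conjugacy class.

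Second, and more fundamentally, you never address the well-definedness issue that is the actual content of the paper's proof. ${\mathcal S}(X)$ is defined on $\approx$-classes of doubly asymptotic points, not on words, so ``the running context class'' of a finite word is not a defined object until the word is completed to a point of $Y_X$ and the resulting class is shown to be independent of the completion. This is exactly where all three hypotheses enter: strong bi-instantaneity supplies, for each sufficiently long word $a$, an anchor $c\in\Gamma^+(a)\cap\Gamma^-(a)$ with $ca\in\omega^+(a)$ and $ac\in\omega^-(a)$; Property $(c)$ then yields $\Gamma(a)=\Gamma(aca)$ (the paper's (7.1)), which makes the class ${\frak p}(a)$ of the periodic point carrying the bi-infinite concatenation of $ac$, and the elements $f^{(\pm)}(a)$ read off from $\zeta(y[c^{(-)},a,c^{(+)}])$, independent of the choice of anchors; and $D(X)\subset A(X)$ is what places these periodic points (which (7.1) puts in $D(X)$) inside $A(X)$, so that they define elements of ${\frak P}(X)$ and of ${\mathcal S}(X)$ at all. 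In your outline Property $(C)$ is reserved for the reverse inclusion and $D(X)\subset A(X)$ is invoked only vaguely, so the foundational step is missing; by contrast, once the labels are defined via the isomorphism $\zeta$, the paper gets (G5) immediately from the surjectivity of $\zeta$ rather than from a separate steering argument. (A minor point: the invariance result you cite as ``Proposition 7.2'' is presumably Proposition 7.1, since 7.2 is the statement being proved.)
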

\begin{proof}
Let $R\in \Bbb N$ be such that
for  such that for any word $a\in \mathcal L(X)$ of length at least 
$R$, there is a word $c\in \Gamma^+(a) \cap  \Gamma^-(a)$ such that 
  $ ca \in \omega^+(a)  $ and $ac \in \omega^-(a)$.
Assume that  $X$ has Property $(c)$ with respect to the parameter $Q$.
Let 
$
k_\circ\geq  \max \{R,Q\}
$
be such that every element of $\frak P(X)$ has an $\approx$-representative in $A_{k_\circ}(X\negthinspace)$. 
By the strong bi-instantaneity of $X$ there exists for $a \in \mathcal L_k(X), k \geq k_\circ$, words
$c \in \Gamma^+(a) \cap  \Gamma^-(a)  $ such that 
\begin{align*}
ca \in \omega^+(a), \qquad ac \in \omega^-(a). \tag {7.1}
\end{align*}
Then
\begin{align*}
\Gamma^-(a) = \Gamma^-(a(ca)^q),  \ \
\Gamma^+(a) = \Gamma^+((ac)^qa), \qquad q > 1,
\end{align*}
and, in view of the length of $a$, it follows by Property (c) of $X$ that 
\begin{align*}
\Gamma(a) = \Gamma(a(ca)^q), \ \ q > 1, \qquad
a\in {\mathcal L}_{k}(X), k \geq k_\circ. \tag {7.2}
\end{align*}
This allows to assign to a word $a \in {\mathcal L}_{k}(X), k \geq k_\circ$ a $\approx$-class $\frak p(a) \in \frak P(X)$
that contains the points that carry the bi-infinite concatenation of the word $ac$, which by (7.1) belong to $A(X)$, since, as a consequence of (7.2), the $\approx$-class of these points does not depend on the choice of the word $c$.

For a word $a \in {\mathcal L}_{k_\circ+1}(X)\cup{\mathcal L}_{k_\circ+2}(X)   $ we denote by $a^{(-)}  $ the prefix of $a$ that is obtained by removing the last symbol, and by  $a^{(+)}  $ the suffix of $a$ that is obtained by removing the first symbol. For $a \in {\mathcal L}_{k_\circ+1}(X)$, let
$$
c^{(-)}  \in \Gamma^+(a^{(-)}) \cap  \Gamma^-(a^{(-)}),  \quad c^{(+)}  \in \Gamma^+(a^{(+)}) \cap  \Gamma^-(a^{(+)}) ,
$$
such that
$$
c^{(-)}a^{(-)} \in \omega^+(a^{(-)}) ,  \quad  a^{(-)}c^{(-)} \in \omega^-(a^{(-)}), 
$$
$$
c^{(+)}a^{(+)} \in \omega^+(a^{(+)}), \quad a^{(+)}c^{(+)} \in \omega^-(a^{(+)}), 
$$
and denote by
$y[ c^{(-)} , a, c^{(+)}]$
the point $y \in Y(X)$, where
$
y _{[1,  k_\circ+1]} = a,
$
and where $y _{(-\infty,  0]}$ carries the left infinite concatenation of the word  $a^{(-)} c^{(-)} $ and  
 $y _{( k_\circ+1  ,\infty)}$ carries the right infinite concatenation of the word  $c^{(+)} a^{(+)} $.

Let
$ \zeta$ denote an isomorphism of ${\mathcal S} (X)$ onto $ {\mathcal S} _{\mathcal R} ({\frak P}, {\mathcal E}^-, {\mathcal E} ^+)$.  In view of the length of $a$, again appealing to Property (c) of $X$, one has that  the image of the point $y[ c^{(-)} , a, c^{(+)}]$ under $\zeta$ does not depend on the choice of $c^{(-)} $ and $ c^{(+)}$. As a  consequence one obtains well defined mappings
$$
f^{(-)}    :{\mathcal L}_{k_\circ+1} \to {\mathcal S}^- _{\mathcal R} ({\frak P},{\mathcal E}^-, {\mathcal E} ^+), \quad  f^{(+)}  :{\mathcal L}_{k_\circ+1}\to
{\mathcal S}^+_{\mathcal R} ({\frak P}, {\mathcal E}^-,{\mathcal E}^+),
$$
by setting
$$
 \zeta(y[ c^{(-)} , a, c^{(+)}] ) = f^{(+)}  (a)  f^{(-)}  (a)   , \quad a \in{\mathcal L}_{k_\circ+1}(X).
$$
$X^{\langle [1,k_\circ+2] \rangle}$ has the ${\mathcal S}_{\mathcal R} ({\frak P},  {\mathcal E}^-,  {\mathcal E} ^+)$-presentation $( {\mathcal V}, \Sigma, \lambda)$, where
$$
 {\mathcal V} ={\mathcal L}_{k_\circ+1}(X), \quad
\Sigma={\mathcal L}_{k_\circ+2}(X),
$$
and
\begin{align*}
s( a) =  a^{(-)},  t(a) =a^{(+)}, \ \  \lambda (a) =    f^{(-)}  (a^{(-)} )   f^{(+)}  (a^{(+)} ) ,    \quad \  a \in {\mathcal L}_{k_{\circ+2}}(X). \tag {7.3}
\end{align*}
One has 
\begin{align*}
 {\mathcal V} ( \frak p )= \{a \in {\mathcal L}_{k_\circ+1}(X)  : \frak p(a) =  \frak p  \}, \quad \frak p \in {\frak P}. \tag {7.4}
\end{align*}
By (7.3) and (7.4) we have (G1), (G2) and (G3) satisfied, and (G4) holds by construction,
The surjectivity of $\zeta$ implies (G5), and  the irreducibility of $( {\mathcal V} ,  \Sigma, \lambda).$ follows from the topological transitivity of $X$.
\end{proof}

We introduce a Condition $(SBI)$ on a  subshift $ {X}\subset { \Sigma}^{ \Bbb Z}$ that, for topologically transitive subshifts, also describes a strengthening of bi-instantaneity:

\bigskip
\noindent
 $(SBI)$ For $a \in X_{[1,n]}, n \in \Bbb N,$ there exists an $x\in A(X)\cap Y(X)$ such that
 $x_{[1,n]}=a$ and
 $$
 x_{(- \infty, 0)} \in \omega ^-(a_1), \qquad x_{(n, \infty)} \in \omega ^+(a_n). 
 $$

%lemma 7.2
\begin{lemma}
Let  $ {X}\subset { \Sigma}^{ \Bbb Z},\widetilde{X}\subset\widetilde { \Sigma}^{ \Bbb Z},$ be   bi-instantaneous subshifts and let  $\varphi: \widetilde {X}\to X $ be a topological conjugacy, that is given by a 1-block map $\widetilde {\Phi}: \widetilde { \Sigma}  \to \Sigma $, with  $\varphi^{-1}$ given with some  $L \in \Bbb Z_+$ by a block map 
$\Phi: X_{[-L,L]}  \to \widetilde { \Sigma}$. Assume that $X$ satisfies Condition $(SBI)$. Then
$\widetilde{X}$ also satisfies Condition $(SBI)$. 
\end{lemma}
\begin{proof}
Let $\widetilde{a} \in \widetilde{X}_{[1,n]}, n\in \Bbb N.$
By the bi-intantaneity of $\widetilde{X}$ we can choose
\begin{align*}
&\widetilde{b}^- \in \omega^-_L( \widetilde{a}_1), \qquad
\\
& \widetilde{b}^+ \in  \omega ^+_L( \widetilde{a}_n). \tag {7.5}
\end{align*}
Set
$$
{b}^- = \widetilde\Phi(\widetilde{b}^-), \qquad {a} = \widetilde\Phi(  \widetilde{a}  ),\qquad
{b}^+ = \widetilde\Phi(\widetilde{b}^+ ).
$$
By the assumption that $X$ satisfies $(SBI)$, we can choose
\begin{align*}
&x^- \in   \omega^-_\infty (b^-_1), 
\\  
&x^+ \in  \omega^+_\infty (b^+_L),\tag{7.6}
\end{align*}
such that
\begin{align*}
x^-b^-ab^+ x^+ \in A(X)\cap Y(X). \tag {7.7}
\end{align*}
Let 
$$
\widetilde x^- \in   \Gamma^-_\infty (\widetilde a), \qquad 
\widetilde x^+ \in   \Gamma^+_\infty (\widetilde a),
$$
be given by setting
$$
\widetilde x^-\widetilde a\widetilde x^+ = \Phi(x^-b^-ab^+ x^+ ).
$$
By (7.7)
$$
\widetilde x^-\widetilde a\widetilde x^+  \in A(\widetilde X)\cap Y(\widetilde X).
$$
We prove that
\begin{align*}
\widetilde x^+ \in \omega^+_\infty (\widetilde a_n).\tag {7.8}
\end{align*}
For this let
$$
\widetilde d \in \Gamma^-(\widetilde a_n), \qquad
 \widetilde d^\prime \in   \Gamma^-_L(\widetilde d\thinspace \widetilde a_n ).
$$
By (7.5)
$$
 \widetilde d^\prime\widetilde d \widetilde{b}^+ \in \mathcal L(\widetilde X).
$$
Therefore one has for
$$
d^\prime = \widetilde \Phi ( \widetilde d^\prime), \qquad   d =  \widetilde\Phi (\widetilde d  ), 
$$
that
$$
d^\prime d {b}^+ \in \mathcal L( X).
$$
By (7.6)
$$
d^\prime d {b}^+x^+ \in  \mathcal L( X),
$$
which implies that
$$
\Phi( d^\prime db^+x^+  ) =  \widetilde d \widetilde x^+ \in  \mathcal L(\widetilde X),
$$
and (7.8) is proved. The proof of 
$
\widetilde x^- \in \omega^-_\infty (\widetilde a_1)
$
 is symmetric.
\end{proof}

For a finite directed graph $\mathcal G(\frak P, \mathcal E)$ an 
$ {\mathcal S}(\frak P,{\mathcal E}^-,{\mathcal E}^+)$-presentation has Property $(A)$, if and only if  every vertex $\frak p \in \frak P$ has at least two incoming edges (see \cite[Section 2]{HK2}).

%lemma7.3
\begin{lemma}
Let $\mathcal G(\frak P, \mathcal E)$ be a finite directed graph such that every vertex $\frak p \in \frak P$ has at least two incoming edges, 
and let $X(  \mathcal V, \Sigma, \lambda)$ be an 
$ {\mathcal S}(\frak P,{\mathcal E}^-,{\mathcal E}^+)$-presentation.
Let  $\widetilde{X}\subset\widetilde { \Sigma}^{ \Bbb Z}$ be a  bi-instantaneous subshift
that is topologically conjugate to $X(  \mathcal V, \Sigma, \lambda)$. Then $\widetilde {X}$ is strongly bi-instantaneous.
\end{lemma}
\begin{proof}
To prove the lemma, it is enough to consider the situation that we have a topological conjugacy 
$\varphi: \widetilde {X}\to X(  \mathcal V, \Sigma, \lambda) $, that is given by a one-block map $\widetilde {\Phi}: \widetilde { \Sigma}  \to \Sigma $, with  $\varphi^{-1}$ given with some  $L \in \Bbb Z_+$ by a block map 
$\Phi: X_{[-L,L]}  \to \widetilde { \Sigma}$, and to show that
for any word $\widetilde a\in \mathcal L(\widetilde X)$  there is a word $\widetilde c\in \Gamma^+(\widetilde a) \cap  \Gamma^-(\widetilde a)$ such that $ \widetilde c\widetilde a \in \omega^+(\widetilde a)  $ and $\widetilde a\widetilde c \in \omega^-(\widetilde a)$.

Inspection shows  that an $ {\mathcal S} _{\mathcal R} (\frak P,{\mathcal E}^-,{\mathcal E}^+)$-presentation $X(  \mathcal V, \Sigma, \lambda)$ satisfies $(SBI)$. It follows from  Lemma (7.2) that the subshift $\widetilde{X}$  satisfies $(SBI)$.
Let then $\widetilde a \in \mathcal L( \widetilde{X} )$, and let
\begin{align*}
\widetilde x^- \in \omega^-_\infty (  \widetilde a ),
\\
 \widetilde x^+ \in \omega^+_\infty (  \widetilde a ), \tag {7.9} 
\end{align*}
such that
\begin{align*}
\widetilde x^- \widetilde a \widetilde x^+ \in A(\widetilde{X}) \cap Y(\widetilde{X} ). \tag {7.10}
\end{align*}
Set
$$
x^- = \widetilde \Phi ( \widetilde x^-),\qquad    a = \widetilde \Phi(  \widetilde a ), \qquad
x =  \widetilde \Phi ( \widetilde x^- \widetilde a \widetilde x^+ ).
$$
By (7.10)
\begin{align*}
x^-ax^+ \in A({X}) \cap Y({X} ).
\end{align*}
Denote the periodic point to which $x^-$($x^+ $) is left (right) asymptotic by $p(-)$( $p(+)$). 

The hypothesis on the graph $\mathcal G(\frak P, \mathcal E)$ ensures that an $\mathcal S(\frak P, \mathcal E^-,\mathcal E^+)$-presentation has Property $(A)$. As a consequence of (7.8) there is a $\frak p \in \frak P(X(  \mathcal V, \Sigma, \lambda))$ such that
$$
\{ p(-),p(+)  \} \subset \frak p.
$$
Let $\pi(-)( \pi(+)  )  $ denote a period of $p(-)$($p(+)$). Let $I(-),  I(+) >L,$ be such that
$$
x^-_{(-\infty, I_-]}  = p(-)_{(-\infty, -I_-]} , \qquad x^+_{(n + I_+ ,\infty)}  = p(+)_{(n + I_+ ,\infty)},
$$
and
\begin{align*}
\lambda ( p_{( -I_--\pi(-),  -I_- ]} =  \lambda ( p_{(n + I_+  ,n + I_+ +\pi(+)  ]} )=  \bold 1_\frak p. \tag {7.11}
\end{align*}
Set
$$
b^- =  x_{(-I_-, 0]},  \qquad b^+=  x_{(n , n + I_+]},
$$
By (7.10)
\begin{align*}
\lambda  (b^-ab^+)  =   \bold 1_\frak p. \tag {7.12}
\end{align*}
Both, the source vertex of $b^-$ and the target vertex of $b^+$, are by (7.10) and (7.11) in $\mathcal V(\frak p)$, 
and we can choose a path $b$ in $(\mathcal V, \Sigma, \lambda)$  
such that 
\begin{align*}
s(b)=t(b^+), \quad t(b)=s(b^.), \qquad\lambda (b) = \bold 1_\frak p. \tag {7.13}
\end{align*}

By  (7.10) and (7.13)
$$
x^-ab^+bb^-bx^+ \in X.
$$
   Let a $\widetilde c \in \mathcal L ( \widetilde X  )$ be given by
 $$
 \widetilde x^-\widetilde a \widetilde c\widetilde a \widetilde x^+ = \Phi(x^-ab^+bb^-bx^+).
 $$
 We prove that
\begin{align*}
 \widetilde c\widetilde a \in \omega^+( \widetilde a ). \tag {7.14}
\end{align*}
 For this let
 \begin{align*}
\widetilde d \in \Gamma^-(\widetilde a), \qquad
 \widetilde d^\prime \in   \Gamma^-_L(\widetilde d\thinspace \widetilde a ). \tag {7.15}
\end{align*}
and set
$$
d = \widetilde \Phi (\widetilde d  ), \qquad d^\prime = \widetilde \Phi (\widetilde d ^\prime ),
$$
By (7.9) and (7.15)
$$
\lambda( d ^\prime  d  a b^+) \neq 0,
$$
and then by (7.12 ) and (7.13),
$$
\lambda(d ^\prime  d  a b^+bb^+ab^+ ) \neq 0,
$$
which means that
$$
d ^\prime  d  a b^+bb^-ab^+  \in \mathcal L(X).
$$
It follows that
$$
 \widetilde d \widetilde a \widetilde c\widetilde a= \Phi(d ^\prime  d  a 
 b^+bb^+ab^+ )
\in \mathcal L (\widetilde X ),
$$
and this proves (7.14). The proof of $\widetilde a  \widetilde c\in \omega^-( \widetilde a )$ is symmetric.
\end{proof}

%theorm7.4
\begin{theorem}
The following are equivalent for a subshift with Property $(A)$
to which there is associated a graph inverse semigroup 
$ {\mathcal S} (\frak P,{\mathcal E}^-,{\mathcal E}^+)$:

 (a) $X$ has an $ {\mathcal S}  (\frak P,{\mathcal E}^-,{\mathcal E}^+)$-presentation,
 
 (b)  $X$ has properties  $(c)$ and $BI$

  and $X$ has a strongly bi-instantaneous presentation,
  
  (c)  $X$ has properties  $(c)$ and $BI$, 
 
 $X$ has a strongly bi-instantaneous presentation, and all bi-instantaneous presentations of $X$ are  strongly bi-instantaneous.
 \end{theorem}
\begin{proof}
One has that $ {\mathcal S}  (\frak P,{\mathcal E}^-,{\mathcal E}^+)$-presentations have Property $(B)$. Apply Proposition 7.1 and Lemma 7.3.
\end{proof}

%Corollary 7.5
\begin{corollary}
The following are equivalent for a subshift with Property $(A)$
to which there is associated a graph inverse semigroup 
$ {\mathcal S} (\frak P,{\mathcal E}^-,{\mathcal E}^+)$:

(a) $X$ has an $ {\mathcal S} (\frak P,{\mathcal E}^-,{\mathcal E}^+)$-presentation,

(b) $X$ has property   $(c)$, 
  and $X$ has a pair $(\Psi^{(-)}, \Psi^{(+)})$
  \begin{align*}
&\Psi^{(-)} : X_{[-L,L]} \to X_{[-L - 1,-1]},
\ 
&\Psi^{(+)} : X_{[-L,L]} \to X_{[1,L+1]},
\end{align*}
   of BI-mappings, such that 
  $\vartheta_{(\Psi^{(-)}, \Psi^{(+)})}(X)$
    is
  strongly bi-instantaneous,
  
  (c)  $X$ has  property   $(c)$, \negthinspace
 $X$ has a pair  of BI-mappings, \negthinspace and for all pairs  $(\Psi^{(-)},\negthinspace \Psi^{(+)})$,
\begin{align*}
&\Psi^{(-)} : X_{[-L,L]} \to X_{[-L - 1,-1]},
\ 
&\Psi^{(+)} : X_{[-L,L]} \to X_{[1,L+1]},
\end{align*}
  of BI-mappings of $X$  one has that
  $\vartheta_{(\Psi^{(-)}, \Psi^{(+)})}(X)$
    is
  strongly bi-instantaneous. 
   \end{corollary}
   \begin{proof}
ApplyLemma 6.6 and Theorem 7.4.
\end{proof}

%section 8
\section{A class of examples}

Let there be given a finite directed graph $\mathcal G(\frak P, \mathcal E)$ with its Markov-Dyck shift
$M\negthinspace D(\mathcal G(\frak P, \mathcal E))$. With  the identity labeling on 
$\mathcal G(\frak P, \mathcal E^-,  \mathcal E^+)$ as $\lambda$ 
we denote for 
$ \frak p \in  \frak P$ by  $\mathcal C_\frak p$ the code that contains the paths 
$c \in \mathcal L(M\negthinspace D(\mathcal G(\frak P, \mathcal E)))$ such that
$
s(c) = \frak p = t(c),  \lambda (c) = \bold 1_\frak p,
$
and such that for every proper prefix $a$ of $c$, $\lambda (a) \neq
 \bold 1_\frak p.$
 
We assume that 
for $\frak q,\frak r \in \frak P  $, one has that $\card (\mathcal E(\frak q , \frak r)) \neq 1 $. 
For $\frak q,\frak r \in \frak P  $ such that $\mathcal E(\frak q , \frak r) \neq \emptyset $
we have the code
$$
\mathcal C_{\frak q , \frak r} = \bigcup _{e \in \mathcal E(\frak q , \frak r)}
e^-\mathcal C_\frak r^\star  e^+,
$$ 
and we denote by $ \mathcal C_{\frak q , \frak r}^\circ$
the code that is obtained by removing from  $ \mathcal C_{\frak q , \frak r}$ all words that have a subword in
 $$
 \bigcup _{\{\frak q^\prime,\frak r^\prime \in \frak P : \mathcal E(\frak q^\prime , \frak r^\prime) \neq \emptyset \}}
 \{e^-(\mathcal C_{ \frak r^\prime}\cup \mathcal C_{ \frak r^\prime}^2) e^+:e \in \mathcal E(\frak q^\prime , \frak r^\prime) \}.
 $$
 We  let $\kappa^-_{\frak q , \frak r, 1}$ denote the identity map on 
 $\mathcal E(\frak q , \frak r)$ and we let $\kappa^-_{\frak q , \frak r, -1}$ denote a permutation of $\mathcal E(\frak q , \frak r)$ without fixed points. Also we denote  by 
 $\mathcal D^\circ_{\frak q , \frak r}$ the code that contains the words 
 $$
 (c^\circ, d) \in \mathcal L(M\negthinspace D(\mathcal G(\frak P, \mathcal E))  \times \{ 1, -1 \}
 $$
 with
 $
 c^\circ \in  \mathcal C_{\frak q , \frak r}^\circ.
 $
Given a word $(c^\circ, d) \in \mathcal D^\circ_{\frak q , \frak r}$ of length $2I, I\in \Bbb N$,  let $K \in \Bbb N$,
 and 
 $
 c^\circ_-,  c^\circ_- \in \mathcal C_{ \frak r},
 $
 and
 $
  c^\circ(k) \in \mathcal C_{ \frak r},  1 \leq k \leq K,
 $
such that  
 $$
 c^\circ =  c^\circ_- (\prod_{1 \leq k \leq K} c^\circ(k))  c^\circ_+,
 $$
and denote by $M_-(c^\circ, d )$($M_+(c^\circ, d )$) the length of $ c^\circ_- $($ c^\circ_+ $), 
and write
 $$
 d = ( d_i)_{1 \leq i \leq 2 I},
 $$
 and set
 $$
 \delta ^-(c^\circ, d ) = d_{M_-(c^\circ, d )}, \qquad  \delta ^+(c^\circ, d) = d_{M_+(c^\circ, d )}.
 $$
 Moreover,  we set
 $$
\mathcal F_{\frak q , \frak r} = \{ (c^\circ, d ) \in \mathcal D^\circ_{\frak q , \frak r}:   
 \delta ^-(c^\circ, d ) \delta ^+(c^\circ, d )                   = -1\}.
 $$
 and we denote by $ \mathcal D_{\frak q , \frak r}^\circ$
the code that is obtained by removing from  $ \mathcal D^\circ_{\frak q , \frak r}$ all words that have a subword in
 $$
 \bigcup _{\{\frak q^\prime,\frak r^\prime \in \frak P : \mathcal E(\frak q^\prime , \frak r^\prime) \neq \emptyset \}}
 \mathcal F_{\frak q^\prime , \frak r^\prime}.
 $$
Finally, we set
$$
 \mathcal D_{\frak p} =
  \bigcup_{\{\frak r \in \frak P \}:
  \mathcal E( \frak p , \frak r ) \neq \emptyset  \}} 
   \mathcal D_{\frak q , \frak r}^\circ, \qquad \frak p \in   \frak P ,
$$
and we denote by $Y$ the coded system of any of the codes
 $ \mathcal D_{\frak p}, \frak p \in  \frak P$, which all yield the same coded system.  
 
 %theorem 8.1
\begin{theorem}
The subshift $Y$ is bi-instantaneous and has Property (A) with associated semigroup 
$\mathcal S(\frak P, \mathcal E^-,  \mathcal E^+)$. $Y$  does not have an $\mathcal S(\frak P, \mathcal E^-,  \mathcal E^+)$-presentation.
\end{theorem}
\begin{proof}
The subshift $Y$ inherits bi-instantaneity from $M\negthinspace D(\mathcal G(\frak P, \mathcal E)) \times \{1, -1   \}^\Bbb Z.$

For all $\frak p \in \frak P $ all words in $\mathcal D_\frak p   $ have the same context. Also one observes that $P(A(Y))$ contains precisely the points that carry a bi-infinite concatenation of a code word in  $\bigcup_{\frak p\in \frak P}\mathcal D_\frak p   $. Property $(A)$ of $Y$ follows.

For the proof of
$
\mathcal S(Y) = \mathcal S(\frak P, \mathcal E^-,  \mathcal E^+)
$
one constructs for $\frak q, \frak r \in \frak P, \mathcal E(\frak q, \frak r )\neq \emptyset,$ a representative of 
$e^-, e \in \mathcal E(\frak q, \frak r ),$  from a word
$(b, d) \in \mathcal D_{\frak q, \frak r }$, such that the initial symbol of $b $ is equal to $e^- $
and $\delta(\frak q, \frak r) = 1,$ or from a word
$(b, d) \in \mathcal D_{\frak q, \frak r }$, such that the initial symbol of $b $ is equal to 
$(\kappa^{-1}_{\frak q, \frak r }(e))^{-} $
and $\delta(\frak q, \frak r) = -1,$ by appending the right infinite concatenation of a  word in $\mathcal D_\frak r$ on the right of the prefix of length $M_-(b, d)$ of $(b, d)$, and a left infinite concatenation of a  word in $\mathcal D_\frak q$ on the left of the prefix, and one has the symmetric construction for  a representative of 
$e^+, e \in \mathcal E(\frak q, \frak r ).$

We show that $Y$ does not have Property $(c)$. For this let 
$ \frak q , \frak r \in \frak P, \mathcal E(\frak q , \frak r) \neq \emptyset $, let $M \in \Bbb N$, and let
$(b^\circ, d^\circ) \in \mathcal D_{ \frak q , \frak r}$ be a  word, 
 such that
$M_-(b^\circ, d^\circ ), M_+(b^\circ, d^\circ)  > M.$
 Let $2J$ be the length of $(b^\circ, d^\circ) $.
Let $e^-$ be the initial symbol of $b^\circ$, and let $e^-$ be its final symbol.
Denote by $(b_-, d_-)$
($(b_+, d_+)$) the word that is obtained from the word $(b^\circ, d^\circ)$ by removing the first (last) pair of symbols. Set
$$
(b,d) = (b_-b^\circ b_+, b_-b^\circ b_+),
$$
and write
$$
d = (d_i  )_{1 \leq i \leq 6J - 2}.
$$
Denote for $\delta _-, \delta _+ \in \{1, -1   \} $ by $d(\delta _-, \delta _+)$ the word that is obtained from the word d by simultaneously replacing the symbol $d_{M_-(b^\circ) -1}$, in the case that this symbol is not $\delta _-$, by  $ \delta _-$, and  the symbol $d_{6J - 2 -M_+(b^\circ) +1}$, in the case that this symbol is not $\delta _+$, by  $ \delta _+$. One finds that the words 
$(b, d(\delta _-, \delta _+) ), \delta _-, \delta _+ \in \{1, -1   \} ,$ are admissible for $Y$ and that their left context, as well as their right context is the same. However the words 
$(b, d(\delta _-, \delta _+) ), \delta _-, \delta _+ \in \{1, -1   \} ,$ have distinct contexts. 
Set
$$
e^-(\delta_-) =
 \begin{cases} 
 e^-,  &\text {if $\delta_- = 1,$         }  \\ (\kappa^{-1}_{\frak q, \frak r }(e))^{-}, &\text {if $\delta_- = -1,$      }
  \end{cases}
$$
$$
e^-(\delta_+) =
 \begin{cases} 
 e^-,  &\text {if $\delta_+ = 1,$         }  \\ (\kappa^{-1}_{\frak q, \frak r }(e))^{+}, &\text {if $\delta_+ = -1.$      }
  \end{cases}
$$
One has 
$$
(e^-(\delta^\prime_-), e^+(\delta^\prime_+)) \in \Gamma (b, d(\delta _-, \delta _+) )
$$
if and only if $\delta^\prime_-=\delta _-, \delta^\prime_+= \delta _+$. 
Apply Theorem 4.1 and Theorem 5.1 to conclude the proof.
\end{proof}

One notes that $Y$ actually has Property $(B)$.

%section 9
\section{Markov codes and zeta functions}

Denoting by
$\Pi_n(X)$ the number of points of period $n$ of a shift-invariant set $X
\subset \Sigma^{\Bbb Z} ,$
the zeta function of $X$ 
is given by
$$
\zeta_X(z) = e^{\sum_{n \in {\Bbb N}} \frac{\Pi_n(X)z^n}{n}}.
$$
We also recall from  \cite{Ke} the notion of a circular Markov code to the extent that is needed here.
We let a Markov code  be given by a  code ${\mathcal C}$ of words
in the symbols of a finite alphabet $\Sigma$
together with a finite set ${ {\mathcal V}}$ and mappings
$t:{\mathcal C} \to  \mathcal V, s:{\mathcal C} \to \mathcal V$.
To a Markov code
$( {\mathcal C} ,t,s)$ there is associated the shift invariant set 
$X_{({\mathcal C},t,s)} \subset \Sigma^{\Bbb Z}$
of points $x \in \Sigma^{\Bbb Z}$ such that 
there are indices 
 $I_k, k \in {\Bbb Z},$  
$$
I_0 \le 0 < I_1,\quad  I_k < I_{k+1}, \quad k \in {\Bbb Z}, 
$$
such that
\begin{align*}
x_{[I_{k},I_{k+1})} \in  {\mathcal C}, \qquad k \in {\Bbb Z},  \tag {9.1}
 \end{align*}
and
\begin{align*}
r(x_{[I_{k-1},I_{k})}) = s(x_{[I_{k},I_{k+1})}), \qquad k \in {\Bbb Z}.  \tag {9.2}
 \end{align*}
$({\mathcal C}, t,s)$ is said to be a circular Markov code
if for every periodic point $x$ in $X_{({\mathcal C},r,s)} $ 
the indices $I_k, k \in {\Bbb Z},$ 
such that  (9.1) 
and  (9.2) hold,
are uniquely determined by $x$.
Given a circular Markov code
$({\mathcal C}, s, r)$ denote by
$
{\mathcal C}(u,w)
$
the set of words $c \in {\mathcal C}$ such that
$s(c) = u$,  $t(c) = w$,$u,w \in {{\mathcal V}}.$
We set
$$
g_{{\mathcal C}(u,v)} =\sum_{0 \leq n <\infty}\card \{ c \in{\mathcal C}: s(c) = u, \ t(c) = v, \ \ell(c) =n\}, 
$$
and we introduce the matrix
$$
H^{({\mathcal C})}(z) = (g_{{\mathcal C}(u,v)}(z))_{u,v \in { {\mathcal V}}}.
$$
For a circular Markov code $({\mathcal C}, t, s),$  one has \cite {Ke} 
\begin{align*}
\zeta_{X_{({\mathcal C}, t, s)}}(z) = \det ( I - H^{({\mathcal C})}(z))^{-1}. 
\tag {9.3}
\end{align*}

Given an ${\mathcal R}$-graph ${\mathcal G}_{\mathcal R}(\frak P,  {\mathcal E}^-,   {\mathcal E}^+ )$. we associate to an
 ${\mathcal S}_{\mathcal R}(\frak P,  {\mathcal E}^-,   {\mathcal E}^+ )$-presentation  $X({\mathcal V} ,\Sigma, \lambda    )$ the state spaces 
 $$
 \Sigma ^- = \{ \sigma \in \Sigma: \lambda (\sigma) \in{\mathcal S}^- \cup\{1_{\frak p}: \frak p \in {\frak P}\}\},
 $$
 and
 $$
  \Sigma ^+ = \{ \sigma \in \Sigma: \lambda (\sigma) \in \{{\bold 1}_{\frak p}: \frak p \in \frak P\} \cup {\mathcal S}^+ \},
 $$
together with  the 0 - 1 transition matrices
 $
( A^-(\rho, \tau))_{\rho, \tau \in \Sigma^-} 
 $
 and
 $
 (  A^+(\rho, \tau)) _{\rho, \tau \in \Sigma^+} ,
 $
 where for $\rho, \tau $ in  $\Sigma ^-$$(\Sigma^+) $  we set
 $
 A^-(\rho, \tau)$$  (A^+(\rho, \tau))
 $
equal to $1$ if and only if $r(\rho) = s(\tau)$.
 We denote the (possibly empty) topological Markov shift with state space $\Sigma^-$($ \Sigma^+$ ) and transition matrix $A^-   $$( A^+   ) $ by $X( \Sigma^-  , A^-)$($X( \Sigma^+ ,  A^+)$).
Also we associate to the finite directed labeled graph $( {\mathcal V}  ,\Sigma, \lambda    )$ the circular Markov code $({\mathcal C}^0(  {\mathcal V}   ,\Sigma, \lambda), r, s)$, where
 ${\mathcal C}^0( {\mathcal V}  ,\Sigma, \lambda)$ ist the set of words
 $$
(\sigma_i)_{1 \leq i \leq I}\in{\mathcal L}(X ( {\mathcal V} , \Sigma, \lambda)  ),\quad I > 1,
 $$
 such that
\begin{align*}
&\lambda( (\sigma_i)_{1 \leq i \leq I}  )
 \in \{{\bold 1}_{\frak p}: 
 \frak p \in \frak P   \},
\\
&\lambda( (\sigma_i)_{1 \leq j\leq J}  )
 \notin \{{\bold 1}_{\frak p}: \frak p \in \frak P   \},\quad 1 < J < I,
 \end{align*}
 we let 
 $$
{\mathcal C}^-_{\circ} ({\mathcal V}, \Sigma, \lambda)\quad ({\mathcal C}^+_{\circ} ({\mathcal V}, \Sigma, \lambda) ) 
 $$
 be the set of words
 $$
(\sigma_i)_{1 \leq i \leq I}\in{\mathcal L}(X ({\mathcal V}, \Sigma, \lambda)  ),\quad I > 1,
 $$
such that
\begin{align*}
& \lambda( (\sigma_i)_{1 \leq i \leq I} ) \in 
{\mathcal S}^-_{\mathcal R}(\frak P,  {\mathcal E}^-,   {\mathcal E}^+ ) \cup \{{ \bold 1}_{\frak p}: \frak p \in \frak P  \},\\
& \lambda( (\sigma_i)_{J \leq i \leq I} ) \in
{\mathcal S}^+_{\mathcal R}(\frak P,  {\mathcal E}^-,   {\mathcal E}^+ ) ,\quad 1 < J \leq I,
\\
& \lambda( (\sigma_i)_{1 \leq i \leq I} ) \in  \{ {\bold 1}_{\frak p}: \frak p \in \frak P  \}\cup{\mathcal S}^+_{\mathcal R}(\frak P,  {\mathcal E}^-,   {\mathcal E}^+ ) ,\\
& \lambda( (\sigma_i)_{1 \leq i \leq J} ) \in{\mathcal S}^+_{\mathcal R}(\frak P,  {\mathcal E}^-,   {\mathcal E}^+ ) ,\quad 1 \leq J<I,
 \end{align*}
and we associate to the finite directed labelled graph $(  {\mathcal V}  ,\Sigma, \lambda    )$ the circular Markov codes $({\mathcal C}^-(  {\mathcal V}  ,\Sigma, \lambda), t, s)$ and $( {\mathcal C}^+({\mathcal V} ,\Sigma, \lambda), t, s)$, where
 ${\mathcal C}^-( {\mathcal V}  ,\Sigma, \lambda)$( ${\mathcal C}^+( {\mathcal V}  ,\Sigma, \lambda)$) ist the set of words
 that contains the words  that are in $  {\mathcal C}^-_{\circ} ( {\mathcal V} , \Sigma, \lambda)$($ {\mathcal C}^+_{\circ} ( {\mathcal V} , \Sigma, \lambda)  $) or that are concatenations of a word in 
$
{\mathcal C}^-_{\circ} ( {\mathcal V} , \Sigma, \lambda)$$ ({\mathcal C}^+_{\circ} ( {\mathcal V} , \Sigma, \lambda) ) 
$
and a word in
$
 {\mathcal L}(\Sigma^-, A^-) 
$
($ 
  {\mathcal L}(\Sigma^+, A^+))
$.

%theorem 9.1
\begin{theorem}
\begin{multline*}
\zeta_{X ({\mathcal V} , \Sigma, \lambda)   }(z) =\\
\frac{\det ({\bold 1} - H^{{\mathcal C}^0({\mathcal V} , \Sigma, \lambda))}   (z))}
{\det ({\bold 1} - A^-z)\det ({\bold 1} - H^{{\mathcal C}^-({\mathcal V}, \Sigma, \lambda))}) (z))\det {(\bold 1} - H^{{\mathcal C}^+({\mathcal V}, \Sigma, \lambda))} (z)))\det ({\bold 1} - A^+z)}.
\end{multline*}
\end{theorem}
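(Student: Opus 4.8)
The plan is to read the right-hand side through formula (9.3) and reduce the determinant identity to a counting statement about periodic points. By (9.3) one has $\det(\bold 1 - H^{{\mathcal C}^0({\mathcal V},\Sigma,\lambda)}(z)) = \zeta_{X_{{\mathcal C}^0}}(z)^{-1}$ and likewise for ${\mathcal C}^-$ and ${\mathcal C}^+$, while by the classical formula for topological Markov shifts $\det(\bold 1 - A^{\pm}z) = \zeta_{X(\Sigma^{\pm},A^{\pm})}(z)^{-1}$ (these are the trivial circular Markov codes of single edges, for which $H$ reduces to $A^{\pm}z$). Hence the asserted formula is equivalent, after taking logarithms and comparing coefficients, to the identity $\Pi_n(X({\mathcal V},\Sigma,\lambda)) = \Pi_n(X(\Sigma^-,A^-)) + \Pi_n(X_{{\mathcal C}^-}) + \Pi_n(X_{{\mathcal C}^+}) + \Pi_n(X(\Sigma^+,A^+)) - \Pi_n(X_{{\mathcal C}^0})$ for all $n$. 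A preliminary step is to check that ${\mathcal C}^0,{\mathcal C}^-,{\mathcal C}^+$ really are circular Markov codes, i.e.\ that the cutting indices of a periodic point of the associated system are unique; this is exactly what the first-return form of their defining conditions, together with the uniqueness of the normal form behind Theorem 2.1, is meant to guarantee, and it is what licenses the use of (9.3).

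The principal tool is a height function. Since every nonzero element of ${\mathcal S}_{\mathcal R}({\frak P},{\mathcal E}^-,{\mathcal E}^+)$ has a unique normal form $(\prod e^+){\bold 1}_{\frak p}(\prod e^-)$, I assign to an admissible path $b$ the integer $H(b)$ equal to its number of ${\mathcal E}^-$-factors minus its number of ${\mathcal E}^+$-factors. The reduction $f^-g^+ = {\bold 1}_{\frak q}$ preserves this net degree, so $H$ is additive along admissible concatenations and is well defined by uniqueness of normal forms; a periodic point of $X({\mathcal V},\Sigma,\lambda)$, which is a periodic admissible bi-infinite path, then carries a height profile $j\mapsto H(p_{[0,j)})$ and a period drift $\Delta(p)$. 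I would classify periodic points by the sign of $\Delta$ and, in the bounded case, by the minimal level of the profile. When $\Delta>0$ the profile drifts upward, so its two-sided record minima occur at canonically determined positions; cutting there decomposes $p$ uniquely into blocks, each a maximal pure-$\Sigma^-$ stretch carrying one opening unmatched within the block and possibly decorated by neutral excursions, which are precisely the codewords of ${\mathcal C}^-$, the wholly pure points being recorded by $X(\Sigma^-,A^-)$. The case $\Delta<0$ is the time-symmetric one matched to ${\mathcal C}^+$ and $X(\Sigma^+,A^+)$, and the balanced case $\Delta=0$, where the profile is bounded and returns to its minimal level periodically, decomposes at those returns into the neutral first-return blocks that are the codewords of ${\mathcal C}^0$.

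The ${\mathcal C}^0$ term enters in the numerator because the neutral excursions are shared. A minimal balanced block such as a matched pair $e^-e^+$ lies simultaneously in ${\mathcal C}^-_\circ$, in ${\mathcal C}^+_\circ$, and in ${\mathcal C}^0$, and more generally a ${\mathcal C}^-$ or ${\mathcal C}^+$ codeword may carry arbitrary neutral first-return excursions; consequently the neutral configurations get counted inside both the up-system and the down-system. This is a renewal over-count, and dividing by $\det(\bold 1 - H^{{\mathcal C}^0})$, which by (9.3) equals $\zeta_{X_{{\mathcal C}^0}}^{-1}$ and therefore sits in the numerator, subtracts $\Pi_n(X_{{\mathcal C}^0})$ and removes exactly the shared neutral part, so that each periodic point of $X({\mathcal V},\Sigma,\lambda)$ is counted with net multiplicity one. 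This is the structural reason the identity takes the form of a ratio rather than a product.

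The hard part is this last bookkeeping: unlike the drift cases, a balanced point has no single canonical cut coming from a strict record, so its relation to the code systems is governed by inclusion–exclusion rather than a bijection, and one must prove that the combined over-count of neutral configurations across ${\mathcal C}^-,{\mathcal C}^+,A^-,A^+$ equals the ${\mathcal C}^0$-count with no residual discrepancy. Carrying this out rests on the first-return formulation of the definitions of ${\mathcal C}^{\pm}_\circ$ and ${\mathcal C}^0$, on the additivity and well-definedness of $H$, and on the uniqueness of the decompositions guaranteed by the circular Markov code property; these together rule out further coincidences among the cuttings. Once the multiplicities are pinned down the $\Pi_n$ identity follows, and reassembling it into the five determinant evaluations via (9.3) is purely formal.
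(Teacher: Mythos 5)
Your proposal is correct and takes essentially the same route as the paper: the paper's own proof consists of exactly the two reductions you perform (the zeta function formula for topological Markov shifts and Keller's formula (9.3)) followed by the instruction to ``collect the contributions'' to the zeta function of $X({\mathcal V},\Sigma,\lambda)$. Your reduction to the periodic-point counting identity, the height-function classification of periodic points by drift, and the inclusion-exclusion accounting by which the neutral (balanced) points, counted in both the ${\mathcal C}^-$- and ${\mathcal C}^+$-systems, are compensated by the ${\mathcal C}^0$-term in the numerator, is precisely the content of that collection step, spelled out in more detail than the paper itself provides.
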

\begin{proof}
Apply the formula for the zeta function of a topological Markov shift (\cite {LM}), and note that  $ \det (\bold 1 - A^-z) = 1 $($\det (\bold 1 - A^+z)=1$) if $X(\Sigma^-, A^-)  $( $X(\Sigma^+, A^+) $) is empty. Apply  formula (9.3) and collect the contributions to the zeta function of $X
 ({\mathcal V}, \Sigma, \lambda)   $.
\end{proof}

Following \cite{Ke}, special cases of Theorem 7.1 appeared in \cite {I}, \cite {KM2} and \cite {IK1}.

We denote for an ${\mathcal S}_{\mathcal R}(\frak P, {\mathcal E}^-, {\mathcal E}^+ )$-presentation 
$X({\mathcal V},\Sigma   ,\lambda  )$, and for $\frak p \in \frak P$ by $P_{\frak p}(X({\mathcal V},\Sigma   ,\lambda  ))$ the set of periodic points of $X({\mathcal V},\Sigma   ,\lambda  )$ that carry for some 
$V\in {\mathcal V}$ a bi-infinite concatenation of a path $b$ from $V$ to $V$ such that $\lambda (b) = {\bold 1}_{\frak p}$.

%proposition 9.2
\begin{proposition}
Let
 ${\mathcal G}_{\mathcal R}(\frak P,{\mathcal E}^-,  {\mathcal E}^+ )$, be an ${\mathcal R}$-graph, in which every vertex has at least two incoming edges. Let                  
$X( {\mathcal V} ,\Sigma  ,\lambda )$ and  $X(\widetilde{\mathcal V},\tilde\Sigma ,  \widetilde{\lambda}  )$  be topologically conjugate
${\mathcal S}_{\mathcal R}(\frak P,  {\mathcal E}^-,  {\mathcal E}^+ )$-presentations. Then
$$
\prod_{\frak p \in \frak P} (\xi - \zeta_{P_{\frak p}(X({\mathcal V},\Sigma   ,\lambda  ))} (z)) =
\prod_{\frak p \in \frak P} (\xi - \zeta_{P_{\frak p}(X(\tilde{\mathcal V},\tilde\Sigma ,  \tilde\lambda  ))} (z)).
$$ 
\end{proposition}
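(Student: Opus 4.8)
The plan is to reduce the asserted identity of products to the observation that the given topological conjugacy relabels the periodic point sets $P_{\mathfrak{p}}$ by a permutation of the index set $\mathfrak{P}$, so that the two indexed families of zeta functions coincide as multisets.

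First I would use that the associated semigroup is an invariant of topological conjugacy \cite{Kr2}, so that the conjugacy $\varphi$ induces an isomorphism between the associated semigroups of $X(\mathcal{V},\Sigma,\lambda)$ and $X(\widetilde{\mathcal{V}},\widetilde{\Sigma},\widetilde{\lambda})$. As these associated semigroups are the $\mathcal{R}$-graph semigroup $\mathcal{S}_{\mathcal{R}}(\mathfrak{P},\mathcal{E}^-,\mathcal{E}^+)$, the conjugacy yields an automorphism of $\mathcal{S}_{\mathcal{R}}(\mathfrak{P},\mathcal{E}^-,\mathcal{E}^+)$. By Theorem 2.1 this automorphism is induced by an automorphism of the $\mathcal{R}$-graph $\mathcal{G}_{\mathcal{R}}(\mathfrak{P},\mathcal{E}^-,\mathcal{E}^+)$; in particular it restricts to a permutation $\pi$ of the vertex set $\mathfrak{P}$, determined on the idempotents by $\mathbf{1}_{\mathfrak{p}}\mapsto\mathbf{1}_{\pi(\mathfrak{p})}$ under the identification of $\mathcal{U}_{\mathcal{S}}$ with $\mathfrak{P}$ obtained in the proof of Theorem 2.1.

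The key step is an intrinsic description of $P_{\mathfrak{p}}$. A point $p\in P_{\mathfrak{p}}(X(\mathcal{V},\Sigma,\lambda))$ carries the bi-infinite concatenation of a cycle $b$ with $\lambda(b)=\mathbf{1}_{\mathfrak{p}}$, hence is left and right asymptotic to itself, and its class in the associated semigroup is exactly the idempotent $\mathbf{1}_{\mathfrak{p}}$; Condition $(a)$ guarantees that the presentation is well enough behaved that $P_{\mathfrak{p}}$ consists precisely of the periodic points carrying this idempotent class. Since the semigroup isomorphism induced by $\varphi$ sends $\mathbf{1}_{\mathfrak{p}}$ to $\mathbf{1}_{\pi(\mathfrak{p})}$, it follows that $\varphi(P_{\mathfrak{p}}(X(\mathcal{V},\Sigma,\lambda)))=P_{\pi(\mathfrak{p})}(X(\widetilde{\mathcal{V}},\widetilde{\Sigma},\widetilde{\lambda}))$. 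Being a conjugacy, $\varphi$ preserves the period of every periodic point, so these two sets contain the same number of points of each period $n$, and therefore
$$\zeta_{P_{\mathfrak{p}}(X(\mathcal{V},\Sigma,\lambda))}(z)=\zeta_{P_{\pi(\mathfrak{p})}(X(\widetilde{\mathcal{V}},\widetilde{\Sigma},\widetilde{\lambda}))}(z),\qquad \mathfrak{p}\in\mathfrak{P}.$$

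Finally I would substitute this equality into the left-hand product and re-index the factors by $\mathfrak{q}=\pi(\mathfrak{p})$; since $\pi$ is a bijection of $\mathfrak{P}$ and the product is symmetric in its factors, the two products agree. I expect the principal difficulty to lie in the key step, namely in verifying, with the help of Condition $(a)$, that the combinatorial definition of $P_{\mathfrak{p}}$ through the labelling $\lambda$ coincides with the purely dynamical description via the $\approx$-class $\mathbf{1}_{\mathfrak{p}}$, so that $\varphi$ genuinely respects the family $\{P_{\mathfrak{p}}\}_{\mathfrak{p}\in\mathfrak{P}}$ up to the permutation $\pi$.
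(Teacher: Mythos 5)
Your proof has the right skeleton, and it is in fact the same skeleton as the paper's: characterize the sets $P_{\frak p}$ intrinsically, invoke a conjugacy invariant to match up the two indexed families, and use that conjugacies preserve periods, so that the factors and hence the symmetric products agree. Your final re-indexing step is fine. The problem is the invariant you route through: it is not available under the stated hypotheses, and this is a genuine gap.

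You treat the conjugacy as inducing an isomorphism of associated semigroups, and then, via Theorem 2.1, a permutation $\pi$ of $\frak P$. This presupposes, first, that the presentations $X({\mathcal V},\Sigma,\lambda)$ and $X(\widetilde{\mathcal V},\widetilde\Sigma,\widetilde\lambda)$ have Property $(A)$ --- otherwise the associated semigroup ${\mathcal S}(X)$ is not even defined --- and, second, that their associated semigroup is ${\mathcal S}_{\mathcal R}({\frak P},{\mathcal E}^-,{\mathcal E}^+)$ with the idempotent ${\bold 1}_{\frak p}$ corresponding to the combinatorially defined set $P_{\frak p}$. Neither statement is a hypothesis of the proposition, and neither is established in the paper for general ${\mathcal S}_{\mathcal R}$-presentations: the paper only cites Theorem 2.3 of \cite{HK2} for the question of which ${\mathcal R}$-graph semigroups are associated to Property-$(A)$ subshifts, and Sections 7 and 8 show that the relation between presentations and Property-$(A)$ subshifts with associated semigroup ${\mathcal S}_{\mathcal R}$ is delicate (extra conditions such as transitivity, Property $(C)$, $BI$, and $D(X)\subset A(X)$ enter). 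So your first two steps have no ground to stand on, and your acknowledged ``key step'' is phrased in terms of the semigroup and therefore inherits the same unproved assumptions. The paper's proof needs none of this machinery: it observes that Condition $(a)$ forces $P(A(X({\mathcal V},\Sigma,\lambda)))=\bigcup_{{\frak p}\in{\frak P}}P_{\frak p}(X({\mathcal V},\Sigma,\lambda))$, and that this union is precisely the partition of $P(A(X))$ into $\approx$-equivalence classes; since the order structure $({\frak P}(X),\approx)$ is invariantly associated to an arbitrary subshift \cite{Kr2} --- no Property $(A)$ required --- the conjugacy carries the family $\{P_{\frak p}\}$ of one presentation onto that of the other, the multisets of zeta functions coincide, and the products agree. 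The repair is therefore to delete the semigroup-automorphism detour and replace it by the direct conjugacy invariance of the $\approx$-class partition of $P(A(X))$; your key step then becomes exactly the paper's assertion that Condition $(a)$ makes $\{P_{\frak p}\}_{{\frak p}\in{\frak P}}$ this partition.
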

\begin{proof}
The hypothesis on the ${\mathcal R}$-graph  $ {\mathcal G}_{\mathcal R}(\frak P, {\mathcal E}^-,  {\mathcal E}^+ )$ implies that
$$
P(A(X( {\mathcal V},\Sigma   ,\lambda  )  )) = \bigcup_{\frak p\in \frak P}P_{\frak p}(X( {\mathcal V},\Sigma   ,\lambda  )),
$$
and, moreover, that this is in fact the partition of $P(A(X( {\mathcal V},\Sigma   ,\lambda  )  )) $ into its $\approx$-equivalence classes, which is invariant under topological conjugacy.
\end{proof}

In the case of graph inverse semigroups of finite directed graphs, in which every vertex has at least two incoming edges, the set $P(A(X( {\mathcal V},\Sigma   ,\lambda  )  )) $ appeared in \cite {HI, HIK} as the set of neutral periodic points. For the case of a Markov-Dyck shift X the coefficient sequence of the Taylor expansion of the coefficient of $\xi^{\card (\frak P) - 1}$ in
 $\prod_{\frak p \in \frak P} (\xi - \zeta_{P_{\frak p}(X)} (z))$ was introduced in \cite {M3} as a generalization of the Catalan numbers. (With the Catalan numbers  $C_k =\frac{1}{n+1} \binom {2n}{n}$ this sequence is in  the case of the Dyck shift $D_N, N > 1,$   equal to
 $
 N^k C_k , k \in \Bbb Z_+.
 $)

%\pagebreak
\bigskip

\par\noindent Institut f\"ur Angewandte Mathematik 
\par\noindent Universit\"at Heidelberg
\par\noindent Im Neuenheimer Feld 294, 
\par\noindent 69120 Heidelberg, Germany
\par\noindent krieger@math.uni-heidelberg.de

 \end{document}